\setlist[enumerate]{itemsep=2pt,parsep=2pt,before={\parskip=2pt}}
\newcommand{\cosimp}[3]{\xymatrix@1{#1 \ar@<.4ex>[r] \ar@<-.4ex>[r] & {\ }#2 \ar@<0.8ex>[r] \ar[r] \ar@<-.8ex>[r] & {\ } #3 \ar@<1.2ex>[r] \ar@<.4ex>[r] \ar@<-.4ex>[r] \ar@<-1.2ex>[r] & \cdots }}
\newcommand{\adjunction}[4]{\xymatrix@1{#1{\ } \ar@<0.3ex>[r]^{ {\scriptstyle #2}} & {\ } #3 \ar@<0.3ex>[l]^{ {\scriptstyle #4}}}}
\newtheorem{theorem}{Theorem}[section]
\newtheorem*{theorem*}{Theorem}
\newtheorem*{definition*}{Definition}
\newtheorem{proposition}[theorem]{Proposition}
\newtheorem{lemma}[theorem]{Lemma}
\newtheorem{corollary}[theorem]{Corollary}
\theoremstyle{definition}
\newtheorem{definition}[theorem]{Definition}
\newtheorem{remark}[theorem]{Remark}
\newtheorem{example}[theorem]{Example}
\crefname{assumption}{assumption}{assumptions}
\crefname{construction}{construction}{constructions}
\newcommand{\et}{\mathrm{\acute{e}t}}
\newcommand{\Hom}{\mathrm{Hom}}
\newcommand{\sHom}{\mathcal{H}\mathrm{om}}
\newcommand{\Spec}{\mathrm{Spec}}
\newcommand{\proet}{\mathrm{pro\acute{e}t}}
\newcommand{\cons}{\mathrm{cons}}
\newcommand{\tor}{\mathrm{tor}}
\newcommand{\dotimes}{\otimes^{\mathbb L}}
\begin{document}

\title{Relative perversity}
\author{David Hansen and Peter Scholze}

\begin{abstract} We define and study a relative perverse $t$-structure associated with any finitely presented morphism of schemes $f: X\to S$, with relative perversity equivalent to perversity of the restrictions to all geometric fibres of $f$. The existence of this $t$-structure is closely related to perverse $t$-exactness properties of nearby cycles. This $t$-structure preserves universally locally acyclic sheaves, and one gets a resulting abelian category $\mathrm{Perv}^{\mathrm{ULA}}(X/S)$ with many of the same properties familiar in the absolute setting (e.g., noetherian, artinian, compatible with Verdier duality). For $S$ connected and geometrically unibranch with generic point $\eta$, the functor $\mathrm{Perv}^{\mathrm{ULA}}(X/S)\to \mathrm{Perv}(X_\eta)$ is exact and fully faithful, and its essential image is stable under passage to subquotients. This yields a notion of ``good reduction'' for perverse sheaves.
\end{abstract}

\maketitle

\tableofcontents

\section{Introduction}

Since their discovery over forty years ago \cite{BBDG}, perverse sheaves quickly emerged as objects of fundamental importance in topology, algebraic geometry, and representation theory. On one hand, perverse sheaves are the correct generalization of local systems from smooth spaces to singular spaces. This is well illustrated by the Poincar\'e duality results for intersection cohomology, and the celebrated decomposition theorem of Beilinson-Bernstein-Deligne-Gabber. On the other hand, many naturally occurring spaces in geometric representation theory are singular, and perverse sheaves on them give rise to a rich source of constructions in representation theory. We refer the reader to the article \cite{dCMBulletin} for a beautiful overview of perverse sheaves and their many applications.

Notably, perverse sheaves are an ``absolute'' theory. Nevertheless, there are hints in the literature that some kind of relative version of this theory should exist. For example, in the geometric Satake equivalence, one studies perverse sheaves on the affine Grassmannian, and at various points of the argument it is important to deform the affine Grassmannian into a family of affine Grassmannians known as the Beilinson--Drinfeld Grassmannian, and correspondingly deform the perverse sheaves into families of perverse sheaves. In particular, in \cite{FarguesScholze}, a ``relative perverse $t$-structure'' is introduced on the Beilinson--Drinfeld Grassmannian, where objects of the heart are those complexes that are perverse after restriction to each fibre of the family.

More generally, in any application of perverse sheaves, one may wonder how these perverse sheaves vary when some of the geometric objects under consideration vary -- be it a smooth projective curve over which objects are defined, or a vector bundle over it, etc. In other words, roughly speaking, one has some base space $S$ and for each $s\in S$ a space $X_s$ on which one wants to consider perverse sheaves $A_s$. As usual the collection $X_s$, $s\in S$, is encoded in a morphism $X\to S$ with fibres $X_s$, and the family of perverse sheaves $A_s$ should assemble into an object $A\in D(X)$ in the derived category of sheaves on $X$. However, there was no adequate language to talk about such ``families of perverse sheaves''. Namely, $A$ will not be a perverse sheaf on the total space $X$. What we will show, however, is that there is a $t$-structure on the derived category of sheaves on $X$ whose heart consists of those $A$ for which all the fibres $A_s$ on $X_s$ are perverse. In other words, the main theorem of this paper is that for any family $f: X\to S$, there is a $t$-structure on the derived category of $\ell$-adic sheaves on $X$ such that $A\in D(X)$ lies in the heart if and only if $A|_{X_{\overline{s}}}\in D(X_{\overline{s}})$ is perverse for all geometric points $\overline{s}$ of $S$. There are two extreme situations: If $S$ is a point, this recovers the usual ``absolute'' perverse $t$-structure; and if $X=S$ then this recovers the standard $t$-structure (whose heart are usual sheaves).

What is surprising is that the existence of such a $t$-structure is both true and non-tautological; in fact, its existence is closely related to perverse $t$-exactness of the nearby cycles functor. While we do not give any real applications in this paper, we do expect that this result will be useful whenever one is studying certain phenomena in families, when those phenomena involve perverse sheaves.

To state our results more precisely, fix a prime $\ell$. We assume that all schemes are qcqs, and live over $\mathbb Z[\tfrac 1\ell]$. Let $f: X\to S$ be a morphism of finite presentation between such schemes. The goal of this paper is to introduce a ``relatively (over $S$) perverse $t$-structure'' on the derived category of \'etale sheaves on $X$, and show that it interacts well with the notion of universally locally acyclic sheaves.

Although probably the case of $\overline{\mathbb Q}_\ell$-coefficients is the most interesting, we also allow some other coefficients; in particular, the case of torsion coefficients is required as an intermediate step.\footnote{As we will apply a lot of descent techniques, we prefer to work with $\infty$-categories. However, as $t$-structures only depend on the underlying triangulated category, the statements of our main results are really about the underlying triangulated categories.}

\begin{enumerate}
\item[{\rm (A)}] Let $\Lambda$ be a ring killed by some power of $\ell$, and denote by $\mathcal D_\et(X,\Lambda)$ the left-completion of the derived $\infty$-category $\mathcal D(X_\et,\Lambda)$ of $\Lambda$-modules on the \'etale site $X_\et$. (If $X_\et$ has locally finite $\ell$-cohomological dimension, then the left-completion is not necessary.)
\item[{\rm (B)}] In the setting of (A), let $\mathcal D_\cons(X,\Lambda)\subset \mathcal D_\et(X,\Lambda)$ be the full $\infty$-subcategory of perfect-constructible complexes. (If $X_\et$ has locally finite $\ell$-cohomological dimension, then $\mathcal D_\et(X,\Lambda)$ is compactly generated with compact objects $\mathcal D_\cons(X,\Lambda)$.)
\item[{\rm (C)}] Let $\Lambda$ be an algebraic extension $L$ of $\mathbb Q_\ell$ or its ring of integers $\mathcal O_L$, and let $\mathcal D_\cons(X,\Lambda)$ be defined as in \cite{HemoRicharzScholbach}. In other words, it is the full $\infty$-subcategory of $\mathcal D(X_\proet,\Lambda)$ consisting of those objects that on a constructible stratification of $X$ become dualizable; by \cite{HemoRicharzScholbach} this agrees with more classical definitions.
\end{enumerate}

In the respective cases, we let $\mathcal D(X)=\mathcal D_\et(X,\Lambda)$, resp.~$\mathcal D(X)=\mathcal D_\cons(X,\Lambda)$, resp.~$\mathcal D(X)=\mathcal D_\cons(X,\Lambda)$. In all cases, $\mathcal D(X)$ is a $\Lambda$-linear $\infty$-category, and pullback along any map $f: Y\to X$ defines functors $\mathcal D(X)\to \mathcal D(Y)$. In fact, $\mathcal D(X)$ is naturally a full $\infty$-subcategory of $\mathcal D(X_\proet,\Lambda)$ stable under pullbacks. In all cases, there is also a symmetric monoidal tensor product, and pullback commutes with tensor products.

In setting (B), it is sometimes important to assume that $\Lambda$ is regular as otherwise this category is not stable under naive truncations. The precise condition on $\Lambda$ we require is that any truncation of a perfect complex of $\Lambda$-modules is still perfect, so whenever we ask that $\Lambda$ is regular, we mean this condition.

If $f: Y\to X$ is separated and of finite type (resp.~of finite presentation in cases (B) and (C)), there is a natural functor $Rf_!: \mathcal D(Y)\to \mathcal D(X)$ compatible with base change and satisfying a projection formula. In case (A), the adjoint functor theorem also gives us right adjoints, and thus internal Hom's, direct images, and exceptional inverse images, and these may or may not preserve subcategories of constructible complexes in general.

The main theorem of the paper is the following.\footnote{As far as we are aware, this notion of relative perversity is new, but in some restricted variant the notion has been considered before by Katz--Laumon \cite{KatzLaumon}.}

\begin{theorem}\label{thm:main} Let $D(X)$ denote the derived category of $\Lambda$-modules in any of the settings (A), (B), and (C). In case (B), assume that $\Lambda$ is regular. In case (C), assume that any constructible subset of $S$ has finitely many irreducible components.

There is a (necessarily unique) $t$-structure $({}^{p/S} D^{\leq 0},{}^{p/S}  D^{\geq 0})$ on $ D(X)$, called the relative perverse $t$-structure, with the following property:

An object $A\in D(X)$ lies in ${}^{p/S} D^{\leq 0}$ (resp.~${}^{p/S} D^{\geq 0}$) if and only if for all geometric points $\overline{s}\to S$ with fibre $X_{\overline{s}} = X\times_S \overline{s}$, the restriction $A|_{X_{\overline{s}}}\in  D(X_{\overline{s}})$ lies in ${}^p  D^{\leq 0}$ (resp.~${}^p  D^{\geq 0}$), for the usual (absolute) perverse $t$-structure.
\end{theorem}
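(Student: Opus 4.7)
Uniqueness of a $t$-structure given its connective part is automatic, so the content of the theorem is existence: one must show that the stipulated classes ${}^{p/S}D^{\leq 0}$ and ${}^{p/S}D^{\geq 0}$ form a $t$-structure. After reducing to Noetherian $S$ of finite Krull dimension by standard approximation (the definition of the two classes is visibly compatible with filtered colimits of bases), I would proceed by Noetherian induction on $\dim S$.

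For the inductive step, decompose $S = U \sqcup Z$ where $j : U \hookrightarrow S$ is a suitably shrunk dense open — arranged so that $f_U : X_U \to U$ has geometrically equidimensional fibres, and so that any specific complex at hand has cohomology sheaves which are lisse along the fibres — and $i : Z \hookrightarrow S$ is its closed complement. Over $U$, the relative perverse $t$-structure can be built directly by stratifying into loci where $f$ is smooth of constant relative dimension $d$; on each such locus fibrewise perversity differs from absolute perversity by the shift $[d]$, so the $t$-structure exists by the classical absolute theory. Over $Z$, the relative perverse $t$-structure exists by the induction hypothesis applied to $f_Z : X_Z \to Z$.

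The gluing along $X = X_U \sqcup X_Z$, via the induced closed/open immersions $i_X : X_Z \hookrightarrow X$ and $j_X : X_U \hookrightarrow X$, is the main step. The BBD recollement formalism always produces a $t$-structure whose connective part equals $\{A : j_X^* A \in {}^{p/U}D^{\leq 0},\ i_X^* A \in {}^{p/Z}D^{\leq 0}\} = {}^{p/S}D^{\leq 0}$, but whose coconnective part is $\{A : j_X^* A \in {}^{p/U}D^{\geq 0},\ i_X^! A \in {}^{p/Z}D^{\geq 0}\}$ — with $i_X^!$ rather than the desired $i_X^*$. Reconciling these two candidate coconnective classes, via the localization triangle $i_{X*} i_X^! \to \mathrm{id} \to R j_{X*} j_X^*$ applied to an object in either class, reduces to showing that the specialization functor $i_X^* R j_{X*} : D(X_U) \to D(X_Z)$ is left $t$-exact for the two relative perverse $t$-structures; the long-exact-sequence bookkeeping then forces $i_X^! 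A$ and $i_X^* A$ to lie in the same relative-perverse coconnective range (given $j_X^* A \in {}^{p/U}D^{\geq 0}$).

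I expect this left $t$-exactness to be the main technical obstacle. Localizing at geometric points of $Z$ and applying proper/smooth base change reduces it to a classical-sounding statement about nearby cycles: for a strictly Henselian trait $\Spec \mathcal O$ with generic point $\eta$ and closed point $s$, and a finitely presented $X \to \Spec \mathcal O$, the nearby cycles functor $\psi : D(X_{\bar\eta}) \to D(X_{\bar s})$ preserves perversity up to the standard shift. This is precisely the perverse $t$-exactness property of nearby cycles that the introduction flags as ``closely related'' to the existence of the relative perverse $t$-structure, and establishing it in the full generality needed — arbitrary finite presentation, both torsion and $\ell$-adic coefficients — is the substantive input. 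Once it is in hand, the BBD recollement delivers the desired $t$-structure, completing the induction and hence the proof.
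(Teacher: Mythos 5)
Your plan correctly isolates the crux — that in any recollement-style construction the connective part comes for free, and the whole content is identifying the glued coconnective part (an $i_X^!$-condition) with the fibrewise $i_X^*$-condition, via left $t$-exactness of $i_X^*Rj_{X*}$. But the two steps you use to discharge this are where the argument breaks. First, the step over $U$: it is not true that on a locus where $f$ is smooth with equidimensional fibres, fibrewise perversity is absolute perversity shifted by $[d]$. Already for $X=U$, $f=\mathrm{id}$, relative perversity is the standard $t$-structure, which is not a shift of the perverse $t$-structure on $U$ (a skyscraper and a local system, both relatively perverse in degree $0$, would require different shifts). The identification up to shift holds only for objects that are universally locally acyclic over $U$ (over a regular base), and being lisse along strata that are smooth over $U$ does \emph{not} imply ULA over $U$: e.g.\ $j_!\mathcal L_{\psi(t/x)}$ on $\mathbb A^1_x\times\mathbb A^1_t$ over $\mathbb A^1_t$ is lisse along the strata $\{x\neq 0\}$, $\{x=0\}$ but fails local acyclicity along $x=0$ at $t=0$ (the Swan conductor jumps). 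Since the $t$-structure must truncate arbitrary objects and the classes are object-independent, per-object shrinking of $U$ does not make the problem go away; you simply re-encounter on $X_U/U$ the same question you set out to solve. (In the paper, the generic ULA-ness used for such spreading-out arguments is itself a consequence of the nearby-cycles theorem, i.e.\ of the hard input.)

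Second, and more fundamentally, the reduction of the left $t$-exactness of $i_X^*Rj_{X*}$ to the classical statement over a strictly henselian trait is not valid when $\dim S\geq 2$. The restriction of $Rj_{X*}B$ to a fibre $X_{\bar z}$, $\bar z\in Z$, is controlled by the punctured strict henselization $U\times_S S_{(\bar z)}$, which is a higher-dimensional object and not the generic point of a trait; and $Rj_{X*}$ of a general constructible complex does not commute with base change along a trait through $\bar z$ (no ULA hypothesis is available for the object $B$ you are truncating). This is exactly the delicate territory of nearby cycles over higher-dimensional bases, which the paper explicitly avoids — its route is instead to use v-/arc-descent for $X\mapsto \mathcal D(X)$ (plus approximation) to replace $S$ by spectra of absolutely integrally closed valuation rings of rank $1$, a reduction that cannot be reached by Noetherian localization at points; and even over such a base the required perverse $t$-exactness of $i^*Rj_*$ is not the classical trait statement but needs Gabber's Artin-vanishing-type bounds for schemes over rank-$1$ valuation rings with algebraically closed fraction field. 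Finally, note that your outline does not address the parts of the theorem that force the stated hypotheses: preservation of constructibility of the truncations in setting (B) (regularity of $\Lambda$) and the whole of setting (C) (integral lattices, pro-systems, and the hypothesis that constructible subsets of $S$ have finitely many irreducible components), which in the paper require the ULA/perverse-ULA machinery and are not formal consequences of the torsion case.
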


\begin{remark} The hypotheses (in cases (B), (C)) are essentially optimal, see Remark~\ref{rem:hypotheses}.
\end{remark}

\begin{remark} Another setting of interest is when $X$ and $S$ are of finite type over $\mathbb C$, in which case one can also use constructible sheaves with $\mathbb Z$-, $\mathbb Q$-, or $\mathbb C$-coefficients. The theorem and its variants discussed below also holds true in that setting, and can be deduced from their $\ell$-adic versions. 
\end{remark}

Again, the existence of this t-structure is somewhat unexpected. More precisely, it is totally formal (at least in setting (A)) that there is a t-structure on $D(X)$ whose connective part is given by complexes which are perverse connective on every geometric fiber. However, the coconnective part of this t-structure is completely inexplicit, and it is very surprising that it turns out to admit such a clean fiberwise description.

The proof of this theorem rests on two ingredients: v-descent, and the theory of nearby cycles. Roughly speaking, v-descent allows us to reduce to the case that $S$ is the spectrum of a valuation ring $V$ with algebraically closed fraction field. In that case the theorem is closely related to the perverse $t$-exactness properties of nearby cycles.

Let us first state the results regarding v-descent; the results here are mostly due to Bhatt--Mathew \cite{BhattMathew} who even prove arc-descent, and their results have been further refined by Gabber \cite{GabberLetterMathew}. In particular, there is no claim of originality in this part. Recall that a map of qcqs schemes $f: Y\to X$ is a v-cover if for any map $\Spec V\to X$ from a valuation ring $V$, there is a faithfully flat extension $V\subset W$ of valuation rings and a lift $\Spec W\to Y$. This is an extremely general class of covers. Even more general is the class of arc-covers, where this lifting condition is restricted to valuation rings of rank $\leq 1$. Intermediate between v-covers and arc-covers is the notion of universal submersions; these are the maps $f: Y\to X$ such that any base change of $f$ induces a quotient map on topological spaces. It is equivalent to the condition that for any map $\Spec V\to X$ as above, with fraction field $K$ of $V$, the inclusion $Y_K\subset Y_V$ is not closed.

\begin{theorem}[Bhatt--Mathew \cite{BhattMathew}, Gabber \cite{GabberLetterMathew}] In any of the settings (A), (B), and (C), the association $X\mapsto \mathcal D(X)$ defines a v-sheaf of $\infty$-categories. In fact, in settings (B) and (C) it is even an arc-sheaf of $\infty$-categories, and in setting (A) a sheaf for universal submersions.
\end{theorem}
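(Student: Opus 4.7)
The plan is to reduce descent along a general $v$-cover (resp.\ arc-cover, resp.\ universal submersion) to two building blocks: \'etale covers, for which descent is essentially built into the definition of each $\mathcal{D}(X)$, and faithfully flat extensions $\Spec W\to \Spec V$ between spectra of valuation rings lying in the appropriate class. This reduction rests on the Rydh--Bhatt--Mathew structure theorem for $v$-covers: any qcqs $v$-cover admits a refinement, Zariski-locally, by a disjoint union of maps $\Spec W\to \Spec V$ of the indicated form, with analogous refinements for arc-covers (requiring only rank $\leq 1$ valuation rings) and for universal submersions (requiring only those extensions that induce a quotient map on underlying topological spaces). After this reduction, the sheaf condition becomes a concrete statement about cosimplicial diagrams associated with a single extension of valuation rings at a time.

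In setting (A), once \'etale (hyper)descent for $\mathcal{D}_\et(-,\Lambda)$ is in hand---which is immediate from the construction via sheafification together with left-completion---the valuative step is precisely the content of \cite{BhattMathew} in the case of $v$-covers, and of Gabber's refinement \cite{GabberLetterMathew} in the case of universal submersions. In settings (B) and (C), we are descending inside an already established sheaf of categories, so only the \emph{constructibility} part of the definition requires checking: we must show that a complex is perfect-constructible (resp.\ lies in the Hemo--Richarz--Scholbach category) arc-locally if and only if it does so globally. For (B) this is again in \cite{BhattMathew}, using that constructible stratifications descend along arc-covers. For (C) the definition of \cite{HemoRicharzScholbach} is designed to guarantee exactly this, reducing the $\ell$-adic descent to its torsion-coefficient counterpart stratum by stratum; the hypothesis that constructible subsets of $S$ have finitely many irreducible components enters here, controlling the stratifications that need to be compared.

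The main obstacle is the valuative step itself: for an extension $V\subset W$ of valuation rings in the relevant class, one must verify that pullback along $\Spec W\to\Spec V$ is conservative and satisfies the Beck--Chevalley / comonadicity condition, so that $\mathcal{D}(\Spec V)$ is recovered as the totalization of the \v{C}ech nerve. For $v$- and arc-covers this is the technical heart of \cite{BhattMathew}, leveraging the presentation of $W$ as a filtered colimit of finitely presented $V$-algebras together with a careful analysis of how \'etale cohomology interacts with such limits; for universal submersions in setting (A), Gabber's note \cite{GabberLetterMathew} supplies the needed strengthening by a more delicate topological argument. With this key input, the remaining ingredients---combinatorial refinement of covers, the torsion-to-$\ell$-adic passage, and the restriction to constructible subcategories---are essentially formal.
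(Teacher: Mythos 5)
Your proposal has two genuine gaps where the intended strategy would fail, beyond merely deferring the valuative heart to the cited sources. First, for the arc-sheaf claim in settings (B) and (C) you say that ``we are descending inside an already established sheaf of categories, so only the constructibility part of the definition requires checking.'' This is not available: the ambient category of setting (A) is \emph{not} an arc-sheaf, by Gabber's example of an arc-cover that is not a universal submersion (Example~\ref{ex:arcnotsubmersion}), so one cannot first descend the object in $\mathcal D_\et(-,\Lambda)$ along an arc-cover and then check constructibility locally. The actual mechanism is that $\mathcal D_\cons$ is \emph{finitary}, which lets one approximate a descent datum along an arbitrary arc-cover by one along a finitely presented arc-cover; such a cover is a v-cover, hence a universal submersion, and only then can the setting (A) statement be invoked. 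Relatedly, in setting (C) with $L$-coefficients your ``stratum by stratum'' reduction to torsion coefficients is not a proof: inverting $\ell$ does not commute with the totalizations computing descent, which is exactly why the paper has to establish the arc-local existence of $\mathcal O_L$-lattices compatible with descent data (Proposition~\ref{prop:constructiblelattices}) and then reduce effectivity to finitely presented arc-covers and finally to finite \'etale $G$-torsors, where descent is $(f_\ast A)^G$. (Also, the hypothesis that constructible subsets of $S$ have finitely many irreducible components plays no role in this descent theorem; it belongs to the main theorem on the relative perverse $t$-structure.)

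Second, your structural reduction is not correct as stated: a general qcqs scheme is not Zariski-locally a spectrum of a valuation ring, so a v-cover (or universal submersion) cannot be refined ``Zariski-locally by a disjoint union of maps $\Spec W\to\Spec V$.'' The usable reductions (finitely presented covers refine by quasi-compact open covers plus proper surjections; general covers handled by passing to points valued in absolutely integrally closed valuation rings) all require the presheaf to be finitary, which setting (A) is not, so even a corrected version of your scheme does not reach the universal-submersion statement in (A). The paper's route there is different: full faithfulness follows from arc-descent of $R\Gamma$ (Bhatt--Mathew), and effectivity is reduced to complexes concentrated in degree $0$ and then to Gabber's theorem that \'etale sheaves of \emph{sets} form a stack for universal submersions (Theorem~\ref{thm:etalesheavesdescend}); that theorem is proved via the recognition criterion for \'etale sheaves among arc-sheaves (Lemma~\ref{lem:recognizeetalesheaf}) together with invariance of sections under faithfully flat extensions of absolutely integrally closed valuation rings (Lemma~\ref{lem:invarianceaicvaluationring}, resting on Corollary~\ref{cor:invarianceofcohomology}). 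Your Beck--Chevalley/comonadicity formulation of ``the valuative step'' does not engage with the actual difficulty for universal submersions, namely that one only knows $Y_K\subset Y_V$ is not closed and must manufacture a suitable $\Spec W\to Y$ before any valuative descent can be applied; as written, the proposal essentially re-cites the attributed sources for the substance while the connecting arguments contain the errors above.
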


We warn the reader that in setting (A) it is not an arc-sheaf, by an example of Gabber, see Example~\ref{ex:arcnotsubmersion}. In fact, universal submersions are the most general class of maps that one can allow. The key step in Gabber's proof is worth stating separately, as it is about general \'etale sheaves (without abelian group structure).

\begin{theorem}[\cite{GabberLetterMathew}] Sending any scheme $X$ to the category of \'etale sheaves on $X$ defines a stack with respect to universal submersions.\footnote{In \cite{GabberLetterMathew}, Gabber also sketches an extension of this result to the case where one sends $X$ to the $(2,1)$-category of ind-finite \'etale stacks.}
 In particular, sending any scheme $X$ to the category of separated \'etale maps of schemes $Y\to X$ defines a stack with respect to universal submersions, and in particular a v-stack.
\end{theorem}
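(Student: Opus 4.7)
The plan is to reduce \'etale sheaf descent along a universal submersion $f\colon Y\to X$ to descent along two classes where it is already known: fpqc covers, for which \'etale descent is classical (SGA 4), and proper surjective maps, for which it is Gabber's proper descent theorem for the \'etale topos, itself a consequence of proper base change. The task is then to show that any universal submersion can be suitably refined by a composition of these two cover types.

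I would first reduce to the finite-presentation setting by noetherian approximation: the assignment $X\mapsto \Shv(X_\et)$ commutes with cofiltered limits of qcqs schemes along affine transition maps, so one may assume $f\colon Y\to X$ is a finitely presented universal submersion between affine schemes of finite presentation over some noetherian base. One may also assume $Y$ is affine.

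The geometric heart of the argument is a refinement statement for universal submersions. Starting from the valuation-theoretic characterization, namely that for every $\Spec V\to X$ with $V$ a valuation ring the inclusion $Y_K\subset Y_V$ is not closed, one uses Raynaud--Gruson style flattening together with a noetherian induction on a stratification of $X$: there exists a proper surjective modification $X'\to X$, obtained from an iterated blow-up along a nowhere dense closed subscheme, and an fppf cover $Y'\to X'$ through which the base change $Y\times_X X'\to X'$ admits a refinement. The composite $Y'\to X'\to X$ then refines $f$ and decomposes as an fppf cover followed by a proper surjective cover; descent holds for each factor by classical fpqc theory and by Gabber's proper descent respectively, and descent is transitive along refinements, yielding descent for $f$ (and hence, by iterating the diagonal construction, for the full \v Cech nerve).

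The main obstacle is the refinement step. Since Raynaud--Gruson flattening only produces flatness of the strict transform, which need not agree with the full base change $Y\times_X X'$, one must exploit the universal submersion hypothesis to guarantee surjectivity of the flat piece and to iterate on a stratification until the entire base is exhausted; this is exactly where the valuation-theoretic definition is used, since it prevents the flattening from losing the generic fiber. A secondary subtlety is that the assertion concerns $1$-categorical sheaf descent, so hyperdescent or spectral-sequence arguments are unavailable; effective descent must ultimately be verified at the level of sheaves of sets, for instance by realizing sections of the descended sheaf over \'etale opens of $X$ as equalizers of sections along the refinement and checking the sheaf condition directly.
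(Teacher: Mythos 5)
Your strategy is essentially the classical Voevodsky--Rydh route (refine the cover by proper surjections and fppf maps, then use proper and flat descent), and the decisive gap is the very first step: the reduction to finitely presented covers. The assignment $X\mapsto \Shv(X_\et)$ of \emph{all} \'etale sheaves of sets is not finitary in the way your argument needs: writing a general universal submersion $Y\to X$ as a cofiltered limit of finitely presented covers $Y_i\to X$, a descent datum along $Y$ is not pulled back from a descent datum along any $Y_i$ (only torsion/constructible objects commute with such limits of schemes), so descent along $Y$ does not follow from descent along the $Y_i$. This is not a technicality: the non-finitely-presented case is exactly the content of Gabber's theorem beyond Rydh's earlier result, and the unavoidable covers are faithfully flat extensions $\Spec W\to \Spec V$ of absolutely integrally closed valuation rings. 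For these, what is actually needed (Lemma~\ref{lem:invarianceaicvaluationring}, resting on Huber's invariance theorem, reproved as Corollary~\ref{cor:invarianceofcohomology} via the ULA/nearby-cycles machinery) is that $\mathcal F(X\times_{\Spec V}\Spec V)\to \mathcal F(X\times_{\Spec V}\Spec W)$ is \emph{bijective} for every \'etale sheaf of sets $\mathcal F$; this is not an instance of ``classical fpqc theory'', which gives effective descent only for quasi-affine objects (e.g.\ separated quasi-compact \'etale schemes), not for arbitrary \'etale sheaves of sets.

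There is also a structural obstruction to your refinement claim. Proper surjections and fppf covers are v-covers, hence so is any composite, and if a v-cover $Y'\to X$ factors through $Y$ then $Y\to X$ is itself a v-cover; so any universal submersion admitting a refinement of the shape you propose is automatically a v-cover, and your method can at best prove the v-stack statement, not descent along arbitrary universal submersions (which are strictly intermediate between v-covers and arc-covers). Moreover, since descent genuinely fails for arc-covers (Example~\ref{ex:arcnotsubmersion}), the submersion hypothesis must be used in a sharper way than ``keeping the flattened piece surjective'': in the paper it enters via Proposition~\ref{prop:submersion}, first because the quotient-map property lets one descend the open locus where two candidate sections agree (injectivity), and second because the non-closedness of $Y_K\subset Y_V$ produces a faithfully flat extension of absolutely integrally closed valuation rings $\Spec W\to \Spec V$ mapping to $Y$, reducing surjectivity to the invariance lemma above. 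The paper's mechanism is correspondingly different in shape: one first descends to an arc-sheaf and then recognizes it as an \'etale sheaf via the criterion of Lemma~\ref{lem:recognizeetalesheaf}; proper base change and blowups do appear there (so your proper-descent ingredient is present), but not as part of a cover refinement.
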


This strengthens some previous descent results, notably by Rydh \cite{Rydhvdescent}, \cite[Theorem 5.6]{BhattMathew}.

Using these descent results and some approximation arguments, we can reduce Theorem~\ref{thm:main} to the case that $S=\Spec V$ where $V$ is a valuation ring with algebraically closed fraction field $K$; one can even assume that $V$ is of rank $1$.

In that case, we rely on the theory of nearby cycles. The foundational results here are due to Deligne \cite{SGA412}, Illusie and Gabber \cite[Appendix]{IllusieAutour}, Huber \cite[Section 4.2]{HuberBook}, Zheng \cite[Appendix]{ZhengDuality}, and recently Lu--Zheng \cite{LuZhengULA}. We take the opportunity to rederive all the basic results about nearby cycles from the perspective of the notion of universal local acyclicity, using critically the recent characterization of universal local acyclicity in terms of dualizability in a symmetric monoidal $2$-category of cohomological correspondences, due to Lu--Zheng \cite{LuZhengULA}. Again, there is no claim of originality.

This symmetric monoidal $2$-category can be defined in any of the settings (A), (B), and (C), but it turns out that universal local acyclicity (i.e., dualizability in this category) implies constructibility, so settings (A) and (B) yield the same universally locally acyclic objects. For this reason, we restrict to settings (B) and (C) for the moment.

\begin{theorem} Let $f: X\to S$ be a separated map of finite presentation between qcqs schemes and let $A\in D(X)$ in one of the settings (B) and (C). The following conditions are equivalent.
\begin{enumerate}
\item[{\rm (i)}] The pair $(X,A)$ defines a dualizable object in the symmetric monoidal $2$-category of cohomological correspondences over $S$.
\item[{\rm (ii)}] The following condition holds after any base change in $S$. For any geometric point $\overline{x}\to X$ mapping to a geometric point $\overline{s}\to S$, and a generization $\overline{t}\to S$ of $\overline{s}$, the map
\[
A|_{\overline{x}} = R\Gamma(X_{\overline{x}},A)\to R\Gamma(X_{\overline{x}}\times_{S_{\overline{s}}} S_{\overline{t}},A)
\]
is an isomorphism.
\item[{\rm (iii)}] The following condition holds after any base change in $S$. For any geometric point $\overline{x}\to X$ mapping to a geometric point $\overline{s}\to S$, and a generization $\overline{t}\to S$ of $\overline{s}$, the map
\[
A|_{\overline{x}} = R\Gamma(X_{\overline{x}},A)\to R\Gamma(X_{\overline{x}}\times_{S_{\overline{s}}} \overline{t},A)
\]
is an isomorphism.
\item[{\rm (iv)}] After base change along $\Spec V\to S$ for any rank $1$ valuation ring $V$ with algebraically closed fraction field $K$ and any geometric point $\overline{x}\to X$ mapping to the special point of $\Spec V$, the map
\[
A|_{\overline{x}} = R\Gamma(X_{\overline{x}},A)\to R\Gamma(X_{\overline{x}}\times_{\Spec V}\Spec K,A)
\]
is an isomorphism.
\end{enumerate}
Moreover, these conditions are stable under any base change, and can be checked arc-locally on $S$.
\end{theorem}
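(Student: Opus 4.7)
The plan is to establish the chain $(\mathrm{i}) \Leftrightarrow (\mathrm{ii}) \Rightarrow (\mathrm{iii}) \Rightarrow (\mathrm{iv}) \Rightarrow (\mathrm{ii})$, and then to deduce the stability and descent claims from the resulting common notion of universal local acyclicity. The equivalence $(\mathrm{i}) \Leftrightarrow (\mathrm{ii})$ is the content of the Lu--Zheng criterion, which we take as a black box; in setting (C) I would reduce to torsion coefficients (where their theorem applies directly) by descent along $\Lambda/\ell^n \to \Lambda$ and passage to the filtered colimit over algebraic subextensions of $L/\mathbb{Q}_\ell$.

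For $(\mathrm{ii}) \Rightarrow (\mathrm{iii})$ I would use that $S_{\overline{t}}$ is a cofiltered limit of affine étale neighborhoods of $\overline{t}$ in $S_{\overline{s}}$, with $\overline{t}$ itself appearing as the closed point. Applying (ii) after base change to each such étale neighborhood and using that étale cohomology of constructible sheaves commutes with cofiltered limits of qcqs schemes with affine transition maps identifies $R\Gamma(X_{\overline{x}} \times_{S_{\overline{s}}} S_{\overline{t}}, A)$ with $R\Gamma(X_{\overline{x}} \times_{S_{\overline{s}}} \overline{t}, A)$. The implication $(\mathrm{iii}) \Rightarrow (\mathrm{iv})$ is a literal specialization: for $S = \Spec V$ with $V$ a rank $1$ valuation ring having algebraically closed fraction field $K$ and $\overline{t} = \Spec K$, one has $S_{\overline{t}} = \overline{t}$ so that (iii) and (iv) read identically.

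The main step is $(\mathrm{iv}) \Rightarrow (\mathrm{ii})$, resting on the fact that any specialization of geometric points can be dominated by a rank $1$ valuation ring. After any base change $S' \to S$, given a geometric point $\overline{x}' \to X \times_S S'$ over $\overline{s}' \to S'$ and a generization $\overline{t}'$ of $\overline{s}'$ in $S'$, the valuative criterion furnishes a rank $1$ valuation ring $V$ with algebraically closed fraction field $K$, a map $\Spec V \to S'$ sending the closed point to $\overline{s}'$ and $\Spec K$ to $\overline{t}'$, and a lift of $\overline{x}'$ to $X \times_S \Spec V$. Functoriality of the specialization map then reduces the desired isomorphism to (iv) applied to $X \times_S \Spec V \to \Spec V$; the delicate bookkeeping required to identify the two specialization maps is what I expect to be the main technical obstacle. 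Finally, stability under base change is tautological from the formulation of (ii)--(iv), and arc-locality follows from the Bhatt--Mathew--Gabber arc-descent for $\mathcal{D}$ in settings (B) and (C) combined with the fact that (iv) only probes rank $1$ valuation rings, which by the very definition of an arc-cover lift (after a faithful extension of $V$) to any member of the cover.
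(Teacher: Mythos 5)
There is a genuine gap, and it sits exactly where the real content of the theorem lies: the implication (iv) $\Rightarrow$ (ii)/(i). Your proposed reduction --- dominate an arbitrary specialization $\overline{t}\rightsquigarrow\overline{s}$ (after an arbitrary base change $S'\to S$) by a rank $1$ valuation ring and invoke ``functoriality of the specialization map'' --- does not work. First, such a dominating rank $1$ valuation ring need not exist: since (ii) quantifies over all base changes, $S'$ can be the spectrum of a valuation ring of rank $2$, and a specialization from its generic to its closed point cannot be realized by a rank $1$ valuation ring (the rank of a valuation can only go up under extension of valued fields). Second, even when $\Spec V\to S'$ exists, condition (iv) concerns the Milnor fibre $(X_V)_{\overline{x}}\times_{\Spec V}\Spec K$ formed from the strict localization of $X\times_{S'}\Spec V$, which is a genuinely different (pro-étale cover of a sub-)scheme of $X_{\overline{x}}\times_{S'_{\overline{s}}}\Spec K$; strict localization does not commute with this base change, so the two ``specialization maps'' cannot be identified by bookkeeping --- comparing them is precisely the nontrivial content. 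The paper's route is entirely different: the dualizability condition (i) is an arc-local, base-change-stable categorical condition (Proposition~\ref{prop:ULAfinitaryarc}), so one reduces by arc-descent and approximation to $S=\Spec V$ with $V$ absolutely integrally closed of rank $1$ (Corollary~\ref{cor:ULAtestrank1}), and there Theorem~\ref{thm:nearbycycles} (constructibility of $Rj_\ast$ and $j^\ast\colon D^{\mathrm{ULA}}(X/S)\simeq D(X_\eta)$, proved by Deligne's induction on dimension via curve fibrations) converts (i) into the statement that $A\to Rj_\ast j^\ast A$ is an isomorphism, which is checked on stalks of the special fibre by exactly (iv). None of this machinery appears in your outline, and it cannot be replaced by the pointwise conditions themselves, which (as the paper notes in the proof of Proposition~\ref{prop:basicpropertiesULA}(v)) are not amenable to arc-descent.

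Taking (i) $\Leftrightarrow$ (ii) from Lu--Zheng as a black box is also not acceptable here: that equivalence, in the stated generality (arbitrary qcqs $S$, and setting (C) including rational coefficients), is essentially the theorem you are asked to prove, and the paper instead proves (i) $\Rightarrow$ (ii),(iii) via Proposition~\ref{prop:basicpropertiesULA}, where the rational-coefficient case is delicate and needs ULA integral structures over a proper cover (Proposition~\ref{prop:ULAintegralstructure}); your suggested reduction ``by descent along $\Lambda/\ell^n\to\Lambda$ and filtered colimits'' glosses over exactly this. Two smaller points: your (ii) $\Rightarrow$ (iii) argument via étale neighborhoods of $\overline{t}$ is off ($\overline{s}$ need not lift to such neighborhoods, and their limit is $S_{\overline{t}}$, not $\overline{t}$; the paper simply applies (ii) after base change to the closure of $\overline{t}$), and checking (iv) arc-locally by lifting rank $1$ valuation rings along an arc-cover again requires comparing Milnor fibres over $V$ and over an extension $W$, i.e.\ Huber's invariance of cohomology under faithfully flat maps of absolutely integrally closed valuation rings (Corollary~\ref{cor:invarianceofcohomology}), not merely the definition of an arc-cover; the paper obtains arc-locality instead from the categorical condition via Proposition~\ref{prop:ULAfinitaryarc}.
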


In particular, this shows that the key to understanding universal local acyclicity is the case where the base is the spectrum of a (rank $1$) valuation ring with algebraically closed fraction field. The key result is the following, which rederives all the basic properties of the nearby cycles functor.

\begin{theorem} Let $X$ be a separated scheme of finite presentation over $S=\Spec V$, where $V$ is a valuation ring with algebraically closed fraction field $K$. Let $j: X_K\subset X$ be the inclusion of the generic fibre. Then, in the settings (B) and (C), the restriction functor
\[
j^\ast: D^{\mathrm{ULA}}(X/S)\to D(X_K)
\]
is an equivalence, whose inverse is given by $Rj_\ast: D(X_K)\subset D(X_{K,\proet},\Lambda)\to D(X_\proet,\Lambda)$.

In particular, the formation of $Rj_\ast$ preserves constructibility, and commutes with any flat base change $V\to V'$ of valuation rings with algebraically closed fraction fields, with relative Verdier duality, and satisfies a K\"unneth formula.
\end{theorem}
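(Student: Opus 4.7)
The plan is to verify three conditions yielding the adjoint equivalence: (a) the counit $j^*Rj_*A\to A$ is an equivalence for $A\in D(X_K)$; (b) $Rj_*A\in D^{\mathrm{ULA}}(X/S)$; and (c) the unit $B\to Rj_*j^*B$ is an equivalence for $B\in D^{\mathrm{ULA}}(X/S)$. Condition (a) is essentially formal: since $K=\colim_{0\neq f\in V}V[1/f]$, the map $\Spec K\hookrightarrow \Spec V$ is a cofiltered limit of quasi-compact open immersions, so $j$ is a cofiltered limit of open immersions $j_f\colon X_{V[1/f]}\hookrightarrow X$. Any constructible $A\in D(X_K)$ descends, by finite presentation, to some $A_f\in D(X_{V[1/f]})$; then $Rj_*A=R(j_f)_*A_f$ and $j^*Rj_*A=j_f^*R(j_f)_*A_f=A_f|_{X_K}=A$.

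For condition (c) I check the unit stalk-wise. On $X_K$ it follows from (a). At a geometric point $\bar x$ of $X$ lying over a geometric point $\bar s$ in the closed locus of $S$, the stalk $(Rj_*j^*B)|_{\bar x}$ computes to $R\Gamma(X_{(\bar x)}\times_V K,B)$, where $X_{(\bar x)}$ is the strict henselization of $X$ at $\bar x$, while $B|_{\bar x}=R\Gamma(X_{(\bar x)},B)$. The comparison map is precisely the generization map of condition (iii) in the previously stated ULA characterization, applied to the generization $\bar\eta=\Spec K$ of $\bar s$; the ULA hypothesis on $B$ makes this an equivalence.

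The main obstacle is (b), which encodes the classical constructibility theorem for nearby cycles. I would reduce to $V$ of rank one: since constructibility and ULA can be checked arc-locally on $S$ by the preceding theorem, and any valuation ring with algebraically closed fraction field admits an arc-cover by rank-one valuation rings (localize at height-one primes and iterate on the residue valuation rings), the reduction is legitimate. In the rank-one case $\Spec K\hookrightarrow\Spec V$ is a genuine quasi-compact open immersion into a henselian trait with algebraically closed generic fiber, and constructibility of $Rj_*A$ is classical (Deligne, SGA~7), cf.~the treatment in \cite{LuZhengULA}. Once constructibility holds, ULA of $Rj_*A$ is automatic by verifying condition (iv) of the characterization: at a geometric point $\bar x$ of the closed fiber, both sides of the comparison compute to $R\Gamma(X_{(\bar x)}\times_V K,A)$ using (a).

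With the equivalence $j^*\colon D^{\mathrm{ULA}}(X/S)\cong D(X_K)$ in hand, the moreover statements are formal. Preservation of constructibility is built into the target. Compatibility with flat base change $V\to V'$: for $g\colon X_{V'}\to X$ and $g_K\colon X_{K'}\to X_K$, both $g^*Rj_*A$ and $Rj'_*g_K^*A$ lie in $D^{\mathrm{ULA}}(X_{V'}/V')$ and agree after applying $j'^*$, hence are identified by the equivalence. Compatibility with relative Verdier duality and K\"unneth then reduces, via the equivalence, to the corresponding statements for the dualizable-object characterization of ULA in the symmetric monoidal $2$-category of cohomological correspondences, where both operations are preserved under equivalences.
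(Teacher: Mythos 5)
Your overall architecture (counit, unit, and the key step that $Rj_\ast A$ is constructible and universally locally acyclic) is reasonable, and your treatment of the unit, the counit, and the ``moreover'' statements matches the paper in spirit: the unit is the generization condition for ULA objects (the paper uses Proposition~\ref{prop:basicpropertiesULA}~(iv) applied to $Y=\Spec K$, you use the stalkwise condition (iii) -- same content), and the base change/duality/K\"unneth consequences are formal once the equivalence is in hand. But your step (b), which is where all the substance of the theorem lies, has genuine gaps.

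First, the reduction to rank one is not legitimate as argued. To check constructibility and ULA-ness of $Rj_\ast A$ arc-locally along a cover of $\Spec V$ by rank-one pieces, you need to identify the restriction of $Rj_\ast A$ to each piece with a pushforward of a constructible complex from the generic fibre of that piece. For the pro-open piece $\Spec V_{\mathfrak p}$ this is fine, but for the pieces of the form $\Spec V_{\mathfrak q}/\mathfrak p$ the identification $(Rj_\ast A)|_{X_{V_{\mathfrak q}/\mathfrak p}}\cong Rj''_\ast\bigl((Rj_\ast A)|_{X_{\kappa(\mathfrak q)}}\bigr)$ is a base-change/transitivity statement for these pushforwards across a closed immersion of bases; it is essentially equivalent to the ULA-ness over the higher-rank base that you are trying to prove, and it moreover presupposes constructibility of $(Rj_\ast A)|_{X_{\kappa(\mathfrak q)}}$, which is again part of what is at stake. (Also, ``iterate on the residue valuation rings'' does not terminate for infinite rank without a separate approximation argument.) The paper never reduces to rank one: it proves the general-rank case directly, by induction on the relative dimension of $X$, using Deligne's fibration-by-curves method (strict henselization of $\mathbb A^1_S$ at the generic point of the special fibre, semistable reduction and a trace argument for the $p$-power-degree cover, spreading out), followed by a compactification and Lemma~\ref{lem:ULAlocalglobal}. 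Second, even granting rank one, ``constructibility of $Rj_\ast A$ is classical (Deligne, SGA~7)'' is not correct: a rank-one valuation ring with algebraically closed fraction field is in general non-noetherian and not a trait, and constructibility of $Rj_\ast$ over such bases is precisely one of the non-classical points that Huber, Lu--Zheng, and this paper establish; citing Lu--Zheng here amounts to citing the theorem itself rather than proving it. Third, your claim that ``ULA of $Rj_\ast A$ is automatic by verifying condition (iv)'' uses the implication (iv)$\Rightarrow$dualizability, which in the paper is Theorem~\ref{thm:ULAcorrect} and is itself proved \emph{using} the nearby cycles theorem; moreover condition (iv) quantifies over all base changes $\Spec V'\to S$, and verifying it requires either the base-change compatibility of $Rj_\ast$ (one of the conclusions, not yet available) or Deligne's generic ULA theorem over fields (which the paper also reproves via this very theorem). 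Finally, you do not address setting (C), where the case of rational coefficients needs a separate argument for full faithfulness (the paper's duality trick in the enlarged categories of Remark~\ref{rem:largecategoryC}).
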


Given a separated map $f: X\to S$ of finite presentation, the functor taking $S'/S$ to $\mathcal D^{\mathrm{ULA}}(X_{S'}/S')$ has good properties.

\begin{proposition} In any setting, the functor $S'\mapsto \mathcal D^{\mathrm{ULA}}(X_{S'}/S')$ is an arc-sheaf of $\infty$-categories. Moreover, it satisfies the valuative criterion of properness in the sense that if $S'=\Spec V$ is the spectrum of a valuation ring $V$ with algebraically closed fraction field $K$, then $\mathcal D^{\mathrm{ULA}}(X_V/V)\to \mathcal D^{\mathrm{ULA}}(X_K/K)$ is an equivalence. In setting (B), it is a finitary arc-sheaf.

In case (C), let $L$ be the algebraic extension of $\mathbb Q_\ell$ and fix some $A\in \mathcal D^{\mathrm{ULA}}(X/S,L)$. Consider the functor taking $S'/S$ to the $\infty$-category of all $A_0\in \mathcal D^{\mathrm{ULA}}(X_{S'}/S',\mathcal O_L)$ with an identification $A_0[\tfrac 1{\ell}]\cong A|_{X_{S'}}$. This is a finitary arc-sheaf satisfying the valuative criterion of properness, and is v-locally nonempty.
\end{proposition}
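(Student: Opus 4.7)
The plan is to deduce all four claims from three tools already available: the arc-descent of the functor $S'\mapsto\mathcal{D}(X_{S'})$ in settings (B) and (C); the four equivalent characterizations of the ULA condition, together with the closing assertion that ULA can be checked arc-locally on $S$; and the equivalence $j^\ast\colon \mathcal{D}^{\mathrm{ULA}}(X_V/V)\xrightarrow{\sim}\mathcal{D}(X_K)$ with quasi-inverse $Rj_\ast$, for $V$ a valuation ring with algebraically closed fraction field $K$.

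For the arc-sheaf property in any setting, I first note that $\mathcal{D}^{\mathrm{ULA}}(X_{S'}/S')\subset\mathcal{D}(X_{S'})$ is a full subcategory stable under pullback; since $S'\mapsto\mathcal{D}(X_{S'})$ is already an arc-sheaf and the ULA condition is arc-local on the base, the full subpresheaf of ULA objects is itself an arc-sheaf. The valuative criterion of properness is then a direct restatement of the nearby cycles equivalence: over a geometric point $\Spec K$ every constructible sheaf is ULA (there being no nontrivial generizations), so $\mathcal{D}^{\mathrm{ULA}}(X_K/K)=\mathcal{D}(X_K)$, and the equivalence $\mathcal{D}^{\mathrm{ULA}}(X_V/V)\simeq\mathcal{D}(X_K)$ is exactly the required statement. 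For the finitary property in setting (B), I combine compact generation of $\mathcal{D}_\et(X,\Lambda)$ by perfect-constructible complexes with the finite presentation of $X\to S$: any ULA object over a cofiltered limit base $S=\lim S_\alpha$ descends to a finite stage by the finitary descent of $\mathcal{D}_\mathrm{cons}$ itself, and the ULA condition (iv), formulated in terms of finitely many elements, descends along with it.

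For the $\mathcal{O}_L$-lift functor in case (C), arc-sheafness, finitariness, and the valuative criterion of properness all follow by repeating the previous arguments with $\mathcal{O}_L$-coefficients, noting that the constraint $A_0[\tfrac1\ell]\simeq A|_{X_{S'}}$ is a descent-compatible datum because inverting $\ell$ commutes with arc-descent and base change. The substantive remaining point is v-local nonemptiness. By the arc-sheaf property of the lift functor it suffices to produce a lift on an arc-cover of $S$; combining this with the valuative criterion reduces the task to producing, for each rank $1$ valuation ring $V$ with algebraically closed fraction field $K$ mapping to $S$, an honest $\mathcal{O}_L$-lift of $A|_{X_K}$ on $X_K$. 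Choose a constructible stratification $X_K=\bigsqcup_i X_i$ on which $A|_{X_K}$ is lisse; on each stratum the associated continuous representation of the profinite fundamental group of $X_i$ on a finite-dimensional $L$-vector space preserves some $\mathcal{O}_L$-lattice by compactness of the fundamental group, giving a lisse $\mathcal{O}_L$-lift on each stratum.

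The main obstacle is then to assemble these stratum-wise lattices into a globally $\mathcal{O}_L$-constructible lift of $A|_{X_K}$. I would proceed by induction on the stratification: at each step one has a recollement-type gluing datum which is a priori only $L$-linear, and one rescales the lattice on the newly added closed stratum by a sufficient power of $\ell$ to make this datum integral; finiteness of the stratification keeps this process terminating. This is the crux of the argument, crucially combining compactness of the fundamental group with the flexibility in rescaling the chosen lattices.
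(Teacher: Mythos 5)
Your first half (arc-sheafness via fullness of the ULA subcategory plus arc-descent of $\mathcal D_\cons$, and the valuative criterion as a restatement of the nearby-cycles equivalence) matches the paper, but the second half has a genuine gap at the crux. For v-local nonemptiness you assert that it "suffices to produce, for each rank $1$ valuation ring $V$ with algebraically closed fraction field mapping to $S$, an $\mathcal O_L$-lift of $A|_{X_K}$". This reduction is not justified: a v-cover (or arc-cover) of a qcqs scheme is itself qcqs, and the disjoint union of all valuation-ring points of $S$ is not one; producing a lift over each $\Spec V$ separately does not produce a section of the lift-functor over any actual cover (nor does finitariness give sections over $\Spec \prod_i V_i$ from sections over each $\Spec V_i$, since that scheme has many more components). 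The paper bridges exactly this point with Lemma~\ref{lem:propersections}: a finitary sheaf of anima with truncated isomorphism-sheaves, satisfying the valuative criterion of properness and nonempty over algebraically closed fields, admits a section over a finitely presented proper surjection -- proved by a Zariski--Riemann/Tychonoff compactness argument over the tower of proper modifications of $S$. Nothing in your proposal plays this role, and your closing paragraph (stratumwise lattices plus rescaling) only re-proves existence of lattices over a geometric point, which is already Corollary~\ref{cor:integralstructureexists}/Proposition~\ref{prop:constructiblelattices}; the globalization from points to a cover is the actual content and is missing. (Also, restricting to rank $1$ is the wrong reduction: the components of any candidate cover by valuation rings have arbitrary rank.)

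Two further, smaller problems. For finitariness in setting (B), the claim that "condition (iv), formulated in terms of finitely many elements, descends along with it" fails: (iv) quantifies over all rank-$1$ valuation rings mapping to the base, and these change when passing from $S'=\varprojlim_i S'_i$ to a finite stage $S'_i$, so ULA of $A$ over $S'$ says nothing directly about ULA of an approximating $A_i$ over $S'_i$. The paper instead uses that dualizability in $\mathcal C'_{S'}$ consists of a finite amount of data and conditions and that $S'\mapsto \mathcal C'_{S'}$ takes cofiltered limits to filtered colimits; likewise, finitariness of the $\mathcal O_L$-lift functor in case (C) is not "the same argument with $\mathcal O_L$-coefficients" (the ULA presheaf is not claimed finitary in setting (C)) -- the paper reduces via the equivalence "$A_0$ is ULA iff $A_0/\ell$ is ULA" to setting (B), together with Proposition~\ref{prop:constructiblelattices}. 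Finally, two cautions: using the clause "ULA can be checked arc-locally" from Theorem~\ref{thm:ULAcorrect} to get effectivity of descent is circular relative to the paper, where that clause is deduced from this very proposition (the paper proves effectivity by checking arc-locally that the pro-\'etale Verdier dual is constructible and base-change compatible and descending the duality data); and the parenthetical "(there being no nontrivial generizations)" is not a valid reason that every constructible complex over a geometric point is ULA -- ULA is a condition after arbitrary base change, and the correct input is Deligne's theorem, here recovered by applying Theorem~\ref{thm:nearbycycles} with $V=K$.
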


The second part of this proposition shows that at least h-locally on the base (where h-covers are by definition finitely presented v-covers), any universally locally acyclic sheaf with rational coefficients admits an integral structure that is also universally locally acyclic; through this result one can get a handle on the case of rational coefficients. We note that Proposition~\ref{prop:ULAintegralstructureunibranch} shows that if $S$ is geometrically unibranch, such an integral structure exists already over $S$.

Moreover, relative perversity interacts well with universal local acyclicity. More precisely:

\begin{theorem}\label{thm:ULAmain} Assume that $X$ is a separated scheme of finite presentation over $S$, and consider one of the settings (B) and (C). In case (B), assume that $\Lambda$ is regular. In case (C), assume that $S$ has only finitely many irreducible components. Then there is a relative perverse $t$-structure
\[
{}^{p/S} D^{\mathrm{ULA},\leq 0}(X/S),{}^{p/S} D^{\mathrm{ULA},\geq 0}(X/S)\subset D^{\mathrm{ULA}}(X/S)
\]
such that $A\in {}^{p/S} D^{\mathrm{ULA},\leq 0}(X/S)$ (resp.~$A\in {}^{p/S} D^{\mathrm{ULA},\geq 0}(X/S)$) if and only if for all geometric points $\overline{s}\to S$, the fibre $A|_{X_{\overline{s}}}$ lies in ${}^p D^{\leq 0}(X_{\overline{s}})$ (resp.~${}^p D^{\geq 0}(X_{\overline{s}})$).
\end{theorem}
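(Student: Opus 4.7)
The task is to show that the relative perverse $t$-structure on $D(X)$ provided by Theorem~\ref{thm:main} restricts to one on the full subcategory $D^{\mathrm{ULA}}(X/S)$; equivalently, that this subcategory is stable under both truncations ${}^{p/S}\tau^{\leq 0}$ and ${}^{p/S}\tau^{\geq 1}$. Since the ULA condition is stable under fiber sequences (by any of the equivalent characterizations recalled earlier), it suffices to prove that ${}^{p/S}\tau^{\leq 0}A$ is ULA whenever $A$ is.

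The plan is to reduce to the case $S = \Spec V$ for $V$ a rank $1$ valuation ring with algebraically closed fraction field. Because the connective and coconnective halves of the relative perverse $t$-structure are characterized by conditions on geometric fibers, both are preserved by any base change $g\colon S' \to S$, and hence the truncation itself commutes with base change: $g^\ast({}^{p/S}\tau^{\leq 0}A) \simeq {}^{p/S'}\tau^{\leq 0}(g^\ast A)$. Combined with the arc-sheaf property of $S' \mapsto D^{\mathrm{ULA}}(X_{S'}/S')$ (applied to the arc-cover of $S$ by the $\Spec V$ as above), this reduces the problem to checking over such $\Spec V$ that the relative perverse truncation of an object of $D^{\mathrm{ULA}}(X_V/V)$ is again ULA.

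Over $S = \Spec V$ the earlier nearby-cycles theorem identifies $j^\ast\colon D^{\mathrm{ULA}}(X_V/V) \xrightarrow{\sim} D(X_K)$, with inverse $Rj_\ast$. Writing $A = Rj_\ast B_0$ with $B_0 := j^\ast A$ and applying $Rj_\ast$ to the absolute perverse truncation triangle ${}^p\tau^{\leq 0}B_0 \to B_0 \to {}^p\tau^{\geq 1}B_0$ on $X_K$, one obtains a fiber sequence in $D(X_V)$ whose outer terms lie in the essential image of $Rj_\ast$ and are therefore ULA. On the generic fiber these outer terms are manifestly perversely connective and coconnective; on the special geometric fiber they identify with the nearby cycles $R\Psi({}^p\tau^{\leq 0}B_0)$ and $R\Psi({}^p\tau^{\geq 1}B_0)$, which lie respectively in ${}^p D^{\leq 0}(X_{\bar s})$ and ${}^p D^{\geq 1}(X_{\bar s})$ by the perverse $t$-exactness of nearby cycles. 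Uniqueness of truncations identifies the constructed triangle with the relative perverse truncation triangle of $A$, proving that both truncations are ULA, as required. The main substantive input is the perverse $t$-exactness of $R\Psi$ over $\Spec V$, which is supplied by the nearby-cycles theory developed earlier in the paper; the remainder of the argument is formal descent together with a direct application of the $j^\ast$/$Rj_\ast$ equivalence.
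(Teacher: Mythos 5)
Your argument for setting (B) is correct and is essentially the paper's own: the truncation functors of the relative perverse $t$-structure of Theorem~\ref{thm:maintext} commute with arbitrary base change, Corollary~\ref{cor:ULAtestrank1} reduces the ULA-ness of the truncations to the case $S=\Spec V$ with $V$ a rank $1$ absolutely integrally closed valuation ring (this is the precise form of your ``arc-sheaf'' reduction -- the family of all such $\Spec V\to S$ is not literally an arc-cover, but the corollary, proved via the arc-sheaf property plus finitaryness, is exactly what you need), and over such $V$ one writes $A\simeq Rj_\ast j^\ast A$ by Theorem~\ref{thm:nearbycycles}, pushes forward the absolute perverse truncation triangle of $j^\ast A$, and uses the perverse $t$-exactness of nearby cycles (Lemma~\ref{lem:nearbycyclestexact}) together with the fibrewise characterization to identify the result with the relative perverse truncation triangle of $A$, with ULA outer terms.

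However, there is a genuine gap: you do not treat setting (C), and your strategy of restricting the ambient $t$-structure cannot cover it under the stated hypotheses. The present theorem in case (C) assumes only that $S$ has finitely many irreducible components, whereas Theorem~\ref{thm:maintext} in case (C) requires every constructible subset of $S$ to have finitely many irreducible components (and Remark~\ref{rem:hypotheses} shows that some hypothesis of this kind is genuinely needed for truncations to preserve constructibility in $D(X)$). So in general there is no relative perverse $t$-structure on all of $D(X)$ available to restrict, and the $t$-structure on $D^{\mathrm{ULA}}(X/S)$ must be constructed directly. This is what the paper does: with $\mathcal O_L$-coefficients it reruns the argument of Theorem~\ref{thm:maintext} inside the ULA category (where ULA-ness over all of $S$ replaces the passage to a constructible stratification, so only the finitely many generic points of $S$ are needed); with $L$-coefficients it first obtains the $t$-structure on $D^{\mathrm{ULA}}(X/S,\mathcal O_L)[\tfrac 1\ell]$, proves a Hom-vanishing statement between connective and coconnective objects that is stable under arbitrary base change $S'\to S$, and then descends the $t$-structure along a finitely presented v-cover over which every object acquires a ULA integral structure, via Proposition~\ref{prop:ULAintegralstructure}. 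None of this appears in your proposal. A smaller point: Lemma~\ref{lem:nearbycyclestexact} is stated only for torsion coefficients, so even the rank-$1$ step in case (C) requires a brief reduction (mod $\ell^n$ and passage to limits, then inverting $\ell$) that you should make explicit.
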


In particular, the inclusion $D^{\mathrm{ULA}}(X/S)\subset D(X)$ is $t$-exact for the relative perverse $t$-structure, and thus for any $A\in D^{\mathrm{ULA}}(X/S)$ its relative perverse cohomologies ${}^{p/S} \mathcal H^i(A)$ are again universally locally acyclic over $S$.

If $S$ is regular of equidimension $d$, then we can also equip $D(X)$ with an absolute perverse $t$-structure. In that case, the shifted inclusion
\[
D^{\mathrm{ULA}}(X/S)\subset D(X): A\mapsto A[d]
\]
is $t$-exact. Thus, in this case the absolute perverse cohomologies ${}^p \mathcal H^i(A)$ are again universally locally acyclic over $S$. This generalizes a result of Gaitsgory \cite{GaitsgoryULA} who proved this result when $S$ is assumed to be smooth over a field.

By Theorem~\ref{thm:ULAmain}, we get in particular a well-behaved ($\Lambda$-linear) abelian category $\mathrm{Perv}^{\mathrm{ULA}}(X/S)$ of relatively perverse universally locally acyclic sheaves over $S$. Our final result concerns properties of this abelian category.

\begin{theorem}\label{thm:perverseULAmain} Consider one of the settings (B) and (C). In case (B), assume that $\Lambda$ is regular. Moreover, in all settings, assume that $S$ is irreducible, and let $\eta\in S$ be the generic point, with $j: X_\eta\subset X$ the inclusion.
\begin{enumerate}
\item[{\rm (i)}] The restriction functor
\[
j^\ast: \mathrm{Perv}^{\mathrm{ULA}}(X/S)\to \mathrm{Perv}(X_\eta)
\]
is an exact and faithful functor of abelian categories. If $\Lambda$ is noetherian, the category $\mathrm{Perv}^{\mathrm{ULA}}(X/S)$ is noetherian. If $\Lambda$ is artinian, it is also artinian.
\item[{\rm (ii)}] Assume that $S$ is geometrically unibranch. The restriction functor
\[
j^\ast: \mathrm{Perv}^{\mathrm{ULA}}(X/S)\to \mathrm{Perv}(X_\eta)
\]
is exact and fully faithful, and its image is stable under subquotients.
\end{enumerate}
\end{theorem}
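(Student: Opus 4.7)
For (i), exactness is essentially a tautology: by definition $A \in \mathrm{Perv}^{\mathrm{ULA}}(X/S)$ has $A|_{X_{\overline{\eta}}}$ perverse, which is equivalent to $j^\ast A = A|_{X_\eta}$ being perverse on $X_\eta$ by Galois descent of the perverse $t$-structure along $\overline{\eta} \to \eta$. Thus $j^\ast$ sends hearts to hearts and is exact. For faithfulness, suppose $\phi: A \to B$ in $\mathrm{Perv}^{\mathrm{ULA}}(X/S)$ satisfies $j^\ast \phi = 0$. Since $S$ is irreducible, any geometric point $\overline{s} \to S$ admits a geometric generization $\overline{\eta} \to S$ above $\eta$; by characterization~(iii) of universal local acyclicity applied to $\phi$, the stalk at any $\overline{x} \to X$ over $\overline{s}$ is
\[
\phi_{\overline{x}} \;=\; R\Gamma\bigl(X_{\overline{x}} \times_{S_{\overline{s}}} \overline{\eta},\; \phi\bigr),
\]
which is computed from $\phi|_{X_{\overline{\eta}}}$, the pullback of $j^\ast \phi = 0$, and so vanishes. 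By constructibility, $\phi = 0$. The noetherian (resp.\ artinian) property of $\mathrm{Perv}^{\mathrm{ULA}}(X/S)$ is then inherited from $\mathrm{Perv}(X_\eta)$ by the standard argument for a faithful exact functor between abelian categories.

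For (ii), uniqueness of any extension of a given $\phi_\eta: A_\eta \to B_\eta$ is already provided by (i); existence proceeds by arc-descent. Choose an arc-cover $\bigsqcup_{\overline{s}} \Spec V_{\overline{s}} \to S$, where $V_{\overline{s}}$ is a valuation ring with algebraically closed fraction field $K_{\overline{s}}$ dominating $\mathcal O_{S,\overline{s}}^{\mathrm{sh}}$, indexed by a sufficient family of geometric points. The geometrically unibranch hypothesis enters critically here: each $\mathcal O_{S,\overline{s}}^{\mathrm{sh}}$ is a domain, so the (faithfully flat, hence injective) map $\mathcal O_{S,s} \hookrightarrow \mathcal O_{S,\overline{s}}^{\mathrm{sh}}$ yields an embedding $k(\eta) \hookrightarrow K_{\overline{s}}$ and hence a canonical map $\Spec K_{\overline{s}} \to \eta$. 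Over each $V_{\overline{s}}$, the equivalence $j_{\overline{s}}^\ast : \mathcal D^{\mathrm{ULA}}(X_{V_{\overline{s}}}/V_{\overline{s}}) \xrightarrow{\sim} \mathcal D(X_{K_{\overline{s}}})$ (with inverse $Rj_\ast$) is $t$-exact for the relative perverse $t$-structure by the $t$-exactness of nearby cycles, and matches $A|_{X_{V_{\overline{s}}}}$ with $A_\eta|_{X_{K_{\overline{s}}}}$ (and likewise for $B$); set $\phi_{V_{\overline{s}}} := Rj_\ast(\phi_\eta|_{X_{K_{\overline{s}}}})$.

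To glue the family $\{\phi_{V_{\overline{s}}}\}$ into a global $\phi$, verify the cocycle condition using arc-sheafiness of $\mathcal D^{\mathrm{ULA}}$. For any valuation ring $W$ with algebraically closed fraction field $L$ and any map $\Spec W \to \Spec V_{\overline{s}} \times_S \Spec V_{\overline{s}'}$, both induced pullbacks $A|_{X_W} \to B|_{X_W}$ restrict on $X_L$ to $\phi_\eta|_{X_L}$, and hence coincide by the equivalence $\mathcal D^{\mathrm{ULA}}(X_W/W) = \mathcal D(X_L)$. Since such $W$ form an arc-cover of the double fibre product $\Spec V_{\overline{s}} \times_S \Spec V_{\overline{s}'}$, arc-sheafiness propagates the agreement there; higher coherences are handled identically on higher fibre products. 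The resulting $\phi: A \to B$ satisfies $j^\ast \phi = \phi_\eta$, giving fullness; combined with faithfulness from (i), $j^\ast$ is fully faithful.

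Finally, stability of the image under subquotients follows by the same arc-descent scheme: given a perverse subobject $C_\eta \subset j^\ast A$ in $\mathrm{Perv}(X_\eta)$, the $t$-exact equivalence above lifts $C_\eta|_{X_{K_{\overline{s}}}} \subset A_\eta|_{X_{K_{\overline{s}}}}$ to a subobject $C_{V_{\overline{s}}} \subset A|_{X_{V_{\overline{s}}}}$ in $\mathrm{Perv}^{\mathrm{ULA}}(X_{V_{\overline{s}}}/V_{\overline{s}})$ for each $\overline{s}$, and the same cocycle check glues these to $C \subset A$ in $\mathrm{Perv}^{\mathrm{ULA}}(X/S)$ with $j^\ast C = C_\eta$. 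The main obstacle throughout is orchestrating the arc-descent cleanly; the three essential inputs, all established earlier in the paper, are arc-sheafiness of $\mathcal D^{\mathrm{ULA}}$, the $t$-exact equivalence $\mathcal D^{\mathrm{ULA}}(X_V/V) \simeq \mathcal D(X_K)$ over valuation rings with algebraically closed fraction field, and the unibranch hypothesis, which is exactly what funnels all local field data through the generic point $\eta$.
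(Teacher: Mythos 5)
In part (i) the final inference is not valid as stated: from $\phi_{\overline{x}}=0$ at every geometric point you conclude ``by constructibility, $\phi=0$'', but a morphism of (relative) perverse sheaves can vanish on all stalks without being zero -- already over a point, the nonzero inclusion $i_\ast\Lambda \to j_!\Lambda[1]$ of perverse sheaves on a curve (kernel of $j_!\Lambda[1]\to\Lambda[1]$) is zero on every stalk. The correct move, which is what the paper does, is to note that an exact functor of abelian categories is faithful if and only if it is conservative, and then to run your stalk argument (characterization (iii) of universal local acyclicity, plus irreducibility so that $\eta$ generizes every point) on an \emph{object} with vanishing generic fibre, e.g.\ the image of $\phi$ in $\mathrm{Perv}^{\mathrm{ULA}}(X/S)$; for objects, stalkwise vanishing plus constructibility does give vanishing. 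With that repair, your part (i) is a legitimate variant of the paper's argument (the paper instead specializes to valuation rings and invokes the equivalence $D^{\mathrm{ULA}}(X_V/V)\simeq D(X_K)$).

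Part (ii) has a genuine gap at its core, namely the verification of the descent datum. For a test valuation ring $W$ with fraction field $L$ mapping to $\Spec V_{\overline{s}}\times_S \Spec V_{\overline{s}'}$, the generic point $\Spec L$ need \emph{not} lie over $\eta$, so the assertion that ``both induced pullbacks restrict on $X_L$ to $\phi_\eta|_{X_L}$'' does not even make sense in general; what you would actually have to show is that the restrictions of $\phi_{V_{\overline{s}}}$ and $\phi_{V_{\overline{s}'}}$ to a common non-generic fibre agree, i.e.\ that the nearby-cycle extension of $\phi_\eta$ is independent of the specialization path. That independence is the real content of the theorem, and it is exactly where geometric unibranchness must do substantive work; your proposal uses unibranchness only to embed $k(\eta)$ into $K_{\overline{s}}$, but such embeddings exist for any irreducible $S$ (take a generic point of the strict henselization), so your argument as written would equally ``prove'' (ii) for, say, a nodal curve, where full faithfulness fails (local systems trivialized on $\eta$ but not on $S$). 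The same flaw infects your subquotient argument. The paper's route is different: full faithfulness comes from the isomorphism $A\to {}^{p/S}\tau^{\leq 0}Rj_\ast j^\ast A$, proved via $Rj_\ast j^\ast A\cong A\dotimes_\Lambda f^\ast Rk_\ast\Lambda$ with unibranchness controlling the degrees of $Rk_\ast\Lambda$; and stability under subobjects uses projective birational modifications $S'\to S$, the rigidity lemma for families of sub-perverse sheaves over the connected proper geometric fibres of $S'\to S$ (this is where unibranchness enters), a Riemann--Zariski limit argument producing a valuation ring where Theorem~\ref{thm:nearbycycles} applies, and Galois descent up finite covers; finally, setting (C) with rational coefficients needs a separate argument via integral structures, which your proposal does not address.
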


We note that the fully faithfulness in part (ii) is a strengthening of a theorem of Reich \cite[Proposition IV.2.8]{ReichExtendULA} who essentially proved the case that $S$ is smooth over a field. Results of this type are used in the proof of the geometric Satake equivalence, which involves an analysis of perverse universally locally acyclic sheaves on Beilinson--Drinfeld Grassmannians. In particular, one needs to know that these are determined by their restriction to a dense open subset of the base, in order to construct the fusion product. Part (ii) gives a very general result of this form. We note that the hypothesis in part (ii) is necessary already when $X=S$, in which case one is looking at local systems on $S$.

\begin{remark} Part (ii) can be seen as giving a notion of ``good reduction'' for a perverse sheaf: If say $S=\Spec\, \mathbb Z_p$ and $X/S$ is a scheme of finite type and $A_0\in \mathrm{Perv}(X_{\mathbb Q_p})$ is a perverse sheaf on the generic fibre, we can ask whether $A_0$ ``has good reduction'' in the sense of extending to a (necessarily relatively perverse) universally locally acyclic sheaf on $X/S$. In that case, its special fibre agrees with the nearby cycle sheaf, so the action of the absolute Galois group of $\mathbb Q_p$ on the nearby cycles is unramified. In fact, the converse is also true. However, over higher-dimensional bases, the condition is more subtle. Let us remark that we have not investigated the relation to the theory of nearby cycles over higher-dimensional base schemes.
\end{remark}

We have the following corollary, which again recovers and extends a result of Gaitsgory \cite{GaitsgoryULA} (who treated the case where $S$ is a smooth variety over a field).

\begin{corollary} Assume that $S$ is regular and connected, of dimension $d$, and that $\Lambda$ is a field. Assume that $A\in D(X)$ is absolutely perverse, and universally locally acyclic over $S$. Then any absolutely perverse subquotient of $A$ is universally locally acyclic over $S$.
\end{corollary}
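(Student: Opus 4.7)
The plan is to show that $\mathrm{Perv}^{\mathrm{ULA}}(X/S)[d]$ is a Serre subcategory of $\mathrm{Perv}(X)$. Combined with the $t$-exactness of the shifted inclusion $D^{\mathrm{ULA}}(X/S)\hookrightarrow D(X)$, $M\mapsto M[d]$, recalled in the introduction under the hypothesis that $S$ is regular of equidimension $d$, this will give $A[-d]\in\mathrm{Perv}^{\mathrm{ULA}}(X/S)$ and hence that any absolutely perverse subquotient of $A$ is universally locally acyclic. Closure under extensions is automatic since $D^{\mathrm{ULA}}(X/S)$ is closed under extensions in $D(X)$, and by Verdier duality, which preserves perversity, universal local acyclicity, and the property of being supported on a closed subscheme of $X$, it suffices to establish closure under subobjects.

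So let $B\subset A$ in $\mathrm{Perv}(X)$ and write $j:X_\eta\hookrightarrow X$. The codimension of $\eta$ in the regular equidimensional scheme $S$ is exactly $d$, and the dimension formulas defining the perverse $t$-structure then show that $j^*[-d]$ restricts to an exact functor $\mathrm{Perv}(X)\to\mathrm{Perv}(X_\eta)$. Since $S$ is regular and connected, hence irreducible and geometrically unibranch, Theorem~\ref{thm:perverseULAmain}(ii) exhibits $\mathrm{Perv}^{\mathrm{ULA}}(X/S)$ as a Serre subcategory of $\mathrm{Perv}(X_\eta)$ via the fully faithful exact functor $j^*$. In particular, the subobject $j^*[-d]B\subset j^*[-d]A=A[-d]|_{X_\eta}$ lifts uniquely to a subobject $B'\subset A[-d]$ in $\mathrm{Perv}^{\mathrm{ULA}}(X/S)$, and $B'[d]\subset A$ is then an absolutely perverse, universally locally acyclic subobject whose image under $j^*[-d]$ agrees with that of $B$.

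The remaining step is to prove $B=B'[d]$. The key input is the vanishing
\[
\mathrm{Hom}_{D(X)}(Q,M)=\mathrm{Hom}_{D(X)}(M,Q)=0
\]
for any absolutely perverse ULA sheaf $M$ on $X/S$ and any perverse sheaf $Q$ on $X$ whose support is contained in $X_Z$ for some proper closed $Z\subsetneq S$. The first equality follows from the identity $M\cong Rj_{U*}(M|_{X_U})$ for $U=S\setminus Z$, itself a consequence of the ULA characterization via stalks along specializations (condition~(iii) of the theorem in the introduction): adjunction then gives $\mathrm{Hom}(Q,M)=\mathrm{Hom}(j_U^*Q,M|_{X_U})=0$ since $j_U^*Q=0$, and Verdier duality handles the second. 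Apply this to the canonical surjection $B'[d]\twoheadrightarrow B'[d]/(B\cap B'[d])$: the target is absolutely perverse, has trivial $j^*[-d]$ by exactness of $j^*[-d]$, and is therefore supported on $X_Z$ for some closed $Z\subsetneq S$; the vanishing forces it to be zero, whence $B'[d]\subset B$. Since the inclusion $\mathrm{Perv}^{\mathrm{ULA}}(X/S)[d]\hookrightarrow\mathrm{Perv}(X)$ is exact as a functor of abelian categories, the quotient $A/B'[d]$ is itself universally locally acyclic, and applying the dual vanishing to the monomorphism $B/B'[d]\hookrightarrow A/B'[d]$ forces $B/B'[d]=0$, i.e., $B\subset B'[d]$. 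Hence $B=B'[d]$, which is universally locally acyclic.

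The main obstacle is the verification of the two structural inputs used above: the exactness of $j^*[-d]:\mathrm{Perv}(X)\to\mathrm{Perv}(X_\eta)$, a consequence of the codimension-shift behavior of the perverse $t$-structure on the regular equidimensional base $S$, and the identity $M\cong Rj_{U*}(M|_{X_U})$ for universally locally acyclic $M$ and open dense $U\subset S$, which follows by a limit argument over all specializations from $\eta$ via condition~(iii). Both amount to careful bookkeeping built on material already developed earlier in the paper rather than genuinely new ideas.
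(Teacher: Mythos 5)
Your overall architecture is sound and in fact close in spirit to the paper's: both arguments rest on Theorem~\ref{thm:perverseULAmain}~(ii) together with the fact that a perverse ULA sheaf admits no perverse subobjects or quotients supported over a proper closed subset $Z\subsetneq S$. (The paper packages this differently: it runs a Jordan--H\"older reduction on $A_\eta$, extends that filtration by ULA perverse sheaves via Theorem~\ref{thm:perverseULAmain}~(ii), and shows that $A$ with simple generic fibre is itself simple, using the estimate that $i_Z^\ast A\in {}^p\mathcal D^{\leq -1}(X_Z)$ and likewise for the Verdier dual.) However, your justification of the key vanishing $\Hom(Q,M)=0$ contains a genuine error: the asserted identity $M\cong Rj_{U\ast}(M|_{X_U})$ for $M$ universally locally acyclic and $U=S\setminus Z$ is false. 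What universal local acyclicity gives (see the proof of Proposition~\ref{prop:basicpropertiesULA}~(v)) is $Rj_{U\ast}(M|_{X_U})\cong M\dotimes_\Lambda f^\ast Rk_{U\ast}\Lambda$ with $k_U: U\hookrightarrow S$, and $Rk_{U\ast}\Lambda\neq\Lambda$ as soon as $Z\neq\emptyset$: already for $X=S$ regular and $M=\Lambda[d]$, the stalk of $Rj_{U\ast}(M|_{X_U})$ at a geometric point of $Z$ computes the cohomology of a punctured strictly henselian local scheme, which has nontrivial higher cohomology (a Tate-twisted class in degree $2c-1$ for $Z$ of codimension $c$, e.g.\ in degree $1$ for a divisor). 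Condition~(iii) of the ULA characterization does not help here: it identifies stalks of $M$ with $R\Gamma(X_{\overline{x}}\times_{S_{\overline{s}}}\overline{t},M)$, the cohomology of a single geometric ``Milnor fibre'', not with $R\Gamma(X_{\overline{x}}\times_S U,M)$, the cohomology of the whole punctured neighbourhood; the discrepancy is exactly the higher cohomology of $S_{\overline{s}}\times_S U$ that your claimed isomorphism would force to vanish.

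The good news is that the vanishing you need is true, and the correct proof is precisely the estimate the paper uses. Since $\mathbb D_X(M)$ is again perverse and ULA, its restriction to $X_T$ for any locally closed $T\subset S$ with $T\neq S$ is relatively perverse over $T$ up to the shift $[d]$, hence lies in ${}^p\mathcal D^{\leq -\,(d-\dim T)}\subset {}^p\mathcal D^{\leq -1}(X_T)$; by d\'evissage over a stratification this gives $i_Z^\ast\mathbb D_X(M)\in {}^p\mathcal D^{\leq -1}(X_Z)$, equivalently $Ri_Z^!M\in {}^p\mathcal D^{\geq 1}(X_Z)$, and then for $Q=i_{Z\ast}Q_0$ perverse one gets $\Hom(Q,M)=\Hom(Q_0,Ri_Z^!M)=0$, with $\Hom(M,Q)=0$ by duality as you say. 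Substituting this for your false identity, the rest of your argument (lifting $j^\ast B$ to $B'\subset A[-d]$ via Theorem~\ref{thm:perverseULAmain}~(ii) and squeezing $B=B'[d]$ between the two vanishings) goes through; you should also be slightly more explicit that the exactness of $j^\ast[-d]:\mathrm{Perv}(X)\to\mathrm{Perv}(X_\eta)$ depends on the choice of dimension function defining the absolute perverse $t$-structure over the regular base $S$, a point the paper itself leaves implicit.
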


\begin{proof} The generic fibre $A_\eta$ admits a finite Jordan--H\"older filtration, which by Theorem~\ref{thm:perverseULAmain}~(ii) extends to a filtration of $A$ by universally locally acyclic sheaves that are absolutely perverse (as over a regular base, absolute and relative perversity agree up to shift for universally locally acyclic sheaves). We can thus assume that $A_\eta$ is simple. In that case one sees that $A$ is also necessarily simple: Indeed, its restriction to a smooth locally closed subscheme of $S$ is still relatively perverse up to shift by dimension $d$, and thus with respect to absolute perversity it lies in ${}^p \mathcal D^{\leq -1}$; and the same argument applies to its Verdier dual.
\end{proof}

{\bf Acknowledgments.} Several years ago, DH conjectured the existence of a well-behaved theory of ``families of perverse sheaves'', motivated by the geometric Langlands literature and in particular Gaitsgory's unpublished notes \cite{GaitsgoryULA}, and gave a talk about these ideas in Bonn in June 2019. During the writing of the proof of the geometric Satake equivalence in \cite{FarguesScholze}, PS realized that the desired theory would follow from the existence of a general relative perverse $t$-structure. The authors were then quickly able to cook up a proof by combining v-descent with the theory of nearby cycles. We thank Akhil Mathew for sharing with us Gabber's letter \cite{GabberLetterMathew}. Moreover, we thank Dennis Gaitsgory, Haoyu Hu, Gerard Laumon, Akhil Mathew, Timo Richarz, Longke Tang, Enlin Yang and Weizhe Zheng for discussions and feedback. Finally, we thank Owen Barrett, Patrick Bieker and the referee for helpful comments and corrections. During the writing of this manuscript, Scholze was supported by a DFG Leibniz Prize, and by the DFG under the Excellence Strategy – EXC-2047/1 – 390685813. 

\section{Derived categories of \'etale sheaves}

In this section, we recall some basics on derived categories of \'etale sheaves. As in the introduction, we consider one of three settings.

\begin{enumerate}
\item[{\rm (A)}] Let $\Lambda$ be a ring killed by some power of $\ell$, and denote by $\mathcal D_\et(X,\Lambda)$ the left-completion of the derived $\infty$-category $\mathcal D(X_\et,\Lambda)$ of $\Lambda$-modules on the \'etale site $X_\et$. (If $X_\et$ has locally finite $\ell$-cohomological dimension, then the left-completion is not necessary.)
\item[{\rm (B)}] In the setting of (A), let $\mathcal D_\cons(X,\Lambda)\subset \mathcal D_\et(X,\Lambda)$ be the full $\infty$-subcategory of perfect-constructible complexes. (If $X_\et$ has locally finite $\ell$-cohomological dimension, then $\mathcal D_\et(X,\Lambda)$ is compactly generated with compact objects $\mathcal D_\cons(X,\Lambda)$.)
\item[{\rm (C)}] Let $\Lambda$ be an algebraic extension $L$ of $\mathbb Q_\ell$ or its ring of integers $\mathcal O_L$, and let $\mathcal D_\cons(X,\Lambda)$ be the full $\infty$-subcategory of $\mathcal D(X_\proet,\Lambda)$ consisting of those objects that on a constructible stratification of $X$ become dualizable.
\end{enumerate}

We will discuss each setting in turn, and discuss the definition of the pullback, tensor, and proper pushforward functors. We start with settings (A) and (B). The starting point is the following proposition.

\begin{proposition}[{\cite[Proposition 5.3.2]{BhattScholzeProetale}}]\label{prop:pullbackproetale} Let $X$ be a qcqs scheme and let $\Lambda$ be any ring. Let $\nu_X: X_\proet\to X_\et$ be the projection from the pro-\'etale site of $X$ to the \'etale site of $X$. Then
\[
\nu_X^\ast: \mathcal D^+(X_\et,\Lambda)\to \mathcal D^+(X_\proet,\Lambda)
\]
is fully faithful, and it extends to a fully faithful functor
\[
\nu_X^\ast: \mathcal D_\et(X,\Lambda)\to \mathcal D(X_\proet,\Lambda)
\]
from the left-completion $\mathcal D_\et(X,\Lambda)$ of $\mathcal D(X_\et,\Lambda)$. The essential image of $\nu_X^\ast$ is the full $\infty$-subcategory of all $A\in \mathcal D(X_\proet,\Lambda)$ such that for all $i\in \mathbb Z$, the pro-\'etale sheaf $\mathcal H^i(A)$ comes via pullback from the \'etale site.
\end{proposition}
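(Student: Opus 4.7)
The proposition naturally splits into three claims: fully faithfulness on $\mathcal D^+$, extension and fully faithfulness on the left-completion, and the characterization of the essential image. I would attack them in this order.

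\emph{Step 1: Fully faithfulness on $\mathcal D^+$.} The functor $\nu_X^\ast$ has a right adjoint $R\nu_{X\ast}$, so fully faithfulness is equivalent to the unit $F\to R\nu_{X\ast}\nu_X^\ast F$ being an equivalence for every $F\in\mathcal D^+(X_\et,\Lambda)$. Because $\nu_X^\ast$ preserves the abelian subcategory of sheaves and is t-exact for the standard t-structure, a standard way-out/dévissage argument (hypercovers plus uniform boundedness below) reduces the problem to the case where $F$ is a single étale sheaf placed in degree $0$. In that case the content is the vanishing $R^i\nu_{X\ast}\nu_X^\ast F=0$ for $i>0$, together with $\nu_{X\ast}\nu_X^\ast F=F$. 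The vanishing is proved using the basis of w-contractible (affine pro-étale) objects $W\to X$: on such $W$, higher pro-étale cohomology of any pro-étale sheaf vanishes, and if $F$ is étale then $(\nu_X^\ast F)(W)=F(W_\et)$ is computed from the étale site of $W$, where all higher cohomology also vanishes because $W$ is w-contractible. Since w-contractibles form a basis of $X_\proet$, this suffices.

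\emph{Step 2: Extension to the left-completion.} By construction the left-completion is $\mathcal D_\et(X,\Lambda)=\varprojlim_n \mathcal D^{\geq -n}(X_\et,\Lambda)$ with transition functors $\tau^{\geq -n}$, while $\mathcal D(X_\proet,\Lambda)$ is already left-complete in the pro-étale topology (a key feature of Bhatt--Scholze's setup, owing again to the w-contractible basis). Thus one can extend $\nu_X^\ast$ termwise and take the limit to obtain the asserted functor on $\mathcal D_\et(X,\Lambda)$. Fully faithfulness on the limit is automatic from fully faithfulness at each finite stage, since mapping spaces in an inverse limit of $\infty$-categories are inverse limits of mapping spaces, and the truncations $\tau^{\geq -n}$ are preserved on both sides.

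\emph{Step 3: Characterization of the essential image.} Let $\mathcal C\subset \mathcal D(X_\proet,\Lambda)$ denote the full subcategory of objects whose cohomology sheaves all lie in $\nu_X^\ast\mathrm{Shv}(X_\et,\Lambda)$. The inclusion of the essential image into $\mathcal C$ is obvious. Conversely, given $A\in\mathcal C$, I would lift each truncation $\tau^{\geq -n}A$ to some $B_n\in\mathcal D^{\geq -n}(X_\et,\Lambda)$ by induction on the amplitude: if $B_{n}$ is built and $\mathcal H^{-n-1}(A)=\nu_X^\ast G_{n+1}$, the $k$-invariant $\tau^{\geq -n}A\to (\nu_X^\ast G_{n+1})[n+2]$ lifts uniquely to an \'etale $k$-invariant by Step~1, giving $B_{n+1}$ with $\nu_X^\ast B_{n+1}\cong \tau^{\geq -n-1}A$ compatibly. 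Taking $B=\varprojlim B_n$ in $\mathcal D_\et(X,\Lambda)$ yields $\nu_X^\ast B\cong \varprojlim \tau^{\geq -n}A\cong A$ by left-completeness of $\mathcal D(X_\proet,\Lambda)$ and compatibility of $\nu_X^\ast$ with the Postnikov limit (again provable on the w-contractible basis, where everything collapses).

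\emph{Main obstacle.} The conceptual crux is Step~1, specifically the vanishing $R^i\nu_{X\ast}\nu_X^\ast F=0$ for $i>0$: this is what makes the comparison between the two sites t-exact enough for all the formal arguments to go through. Everything else is dévissage and limits. The cleanest way to handle this vanishing — and the one I would use — is to work directly on the basis of w-contractible objects and exploit that on them both \'etale and pro-\'etale cohomology reduce to global sections of a constant presheaf.
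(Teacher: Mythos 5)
The paper does not prove this proposition at all: it is quoted verbatim from \cite[Proposition 5.3.2]{BhattScholzeProetale}, so the only comparison available is with the argument given there. Your Steps 2 and 3 are fine modulo Step 1, and indeed match how the full statement is deduced from the bounded-below comparison: extend termwise to the left-completion using left-completeness (repleteness) of $\mathcal D(X_\proet,\Lambda)$, and recover the essential image by inductively lifting Postnikov towers via $k$-invariants, using full faithfulness on $\mathcal D^+$.

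The genuine gap is in Step 1, exactly at the point you identify as the crux. The assertion $R^i\nu_{X\ast}\nu_X^\ast F=0$ for $i>0$ (together with $\nu_{X\ast}\nu_X^\ast F=F$) amounts to proving $H^i_\et(U,F)\cong H^i_\proet(U,\nu_X^\ast F)$ for qcqs \'etale $U/X$; it is a statement about \'etale opens $U$, and w-contractibles form a basis of $X_\proet$, not of $X_\et$. Knowing that higher pro-\'etale cohomology, and higher \'etale cohomology, vanish on a w-contractible $W$, and that $(\nu_X^\ast F)(W)=\Gamma(W_\et,F)$, does not by itself ``suffice'': when you try to descend from $W$ back down to $U$, the \v{C}ech fibre products $W\times_U\cdots\times_U W$ are no longer w-contractible, and more fundamentally nothing in your sketch links $R\Gamma_\et(U,F)$ to the pro-\'etale computation over $U$. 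To close the argument you need a bridge, namely one of: (a) pass to a hypercover $W_\bullet\to U$ all of whose terms are w-contractible and prove cohomological descent for \emph{\'etale} cohomology along this faithfully flat but weakly \'etale (non-\'etale, non-finitely-presented) hypercover, so that $R\Gamma_\et(U,F)\simeq \mathrm{Tot}\,\Gamma((W_\bullet)_\et,F)\simeq R\Gamma_\proet(U,\nu_X^\ast F)$ --- an input that itself requires justification for arbitrary $\Lambda$; or (b) the route actually taken by Bhatt--Scholze: first prove that for affine pro-\'etale $V=\varprojlim_i V_i$ one has $(\nu_X^\ast F)(V)=\varinjlim_i F(V_i)$ (this identification, which you assert without comment, is itself a lemma --- one must check the presheaf colimit is already a sheaf on this basis), combine it with continuity of \'etale cohomology, $H^i_\et(\varprojlim_i V_i,F)=\varinjlim_i H^i_\et(V_i,F)$, and then compare \'etale and pro-\'etale \v{C}ech cohomology by induction on the cohomological degree. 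As written, your Step 1 only establishes the comparison when $U$ is itself replaced by a w-contractible object, which is not what $R\nu_{X\ast}$ sees; supplying (a) or (b) is where the real content of the proposition lies.
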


Moreover, if $f: X\to S$ is a separated map of finite type, then choosing a compactification $\overline{f}: \overline{X}\to S$, $j: X\hookrightarrow \overline{X}$, we can define
\[
Rf_! = R\overline{f}_\ast j_!: \mathcal D_\et(X,\Lambda)\to \mathcal D_\et(S,\Lambda).
\]
It follows from the usual formalism that this functor is independent of the choice of compactification, preserves all colimits, commutes with pullbacks, and satisfies a projection formula. As $\mathcal D_\et(X,\Lambda)$ is a presentable $\infty$-category, one can also use the adjoint functor theorem to see that there are functors $R\sHom_\Lambda$, $Rf_\ast$ and $Rf^!$ right adjoint to $\dotimes_\Lambda$, $f^\ast$ and $Rf_!$, satisfying all the usual formalism. (We do not try to make the $6$-functor formalism into a coherent $\infty$-categorical structure here; all coherences between these operations are only claimed as data on the level of homotopy categories.)

For setting (B), we restrict to the full $\infty$-subcategory
\[
\mathcal D_\cons(X,\Lambda)\subset \mathcal D_\et(X,\Lambda)
\]
of constructible objects, i.e.~objects that become locally constant with perfect fibres over a constructible stratification. Again, this is stable under tensor products and pullbacks, and if $f: X\to S$ is separated and of finite presentation, then $Rf_!$ preserves $\mathcal D_\cons(X,\Lambda)$ by the usual finiteness results.

In setting (C), we first define, following \cite{HemoRicharzScholbach}
\[
\mathcal D_\cons(X,\Lambda)\subset \mathcal D(X_\proet,\Lambda)
\]
as the full $\infty$-subcategory of all objects $A$ such that $A|X_i \in \mathcal D(X_{i,\proet},\Lambda)$ is dualizable for some constructible stratification $X = \cup_i X_i$. This definition agrees with the more classical definition. Namely, \cite{HemoRicharzScholbach} show that
\[
\mathcal D_\cons(X,L) = \varinjlim_{L'\subset L} \mathcal D_\cons(X,L'),\ \mathcal D_\cons(X,\mathcal O_L) = \varinjlim_{L'\subset L} \mathcal D_\cons(X,\mathcal O_{L'})
\]
as $L'\subset L$ ranges over finite extensions of $\mathbb Q_\ell$, reducing the study of these $\infty$-categories to the case of finite extensions $L$ of $\mathbb Q_\ell$. (In fact, this follows quickly from the definitions.) In that case the functor
\[
\mathcal D_\cons(X,\mathcal O_L)\to \varprojlim_n \mathcal D_\cons(X,\mathcal O_L/\ell^n)
\]
is an equivalence (again, this is not hard to prove), and we will show below that
\[
\mathcal D_\cons(X,L) = \mathcal D_\cons(X,\mathcal O_L)\otimes_{\mathcal O_L} L.
\]
Here, it is easy to see that the natural functor
\[
\mathcal D_\cons(X,\mathcal O_L)\otimes_{\mathcal O_L} L\to \mathcal D_\cons(X,L)
\]
is fully faithful.

It follows from the definition that $\mathcal D_\cons(X,\Lambda)\subset \mathcal D(X_\proet,\Lambda)$ is a symmetric monoidal $\infty$-subcategory, compatible with pullback along $f: Y\to X$. The other description of $\mathcal D_\cons(X,\Lambda)$ also shows that one can define a functor $Rf_!$ for separated maps of finite presentation $f: X\to S$, via reduction to case (B); they continue to satisfy all the usual properties.

These $\infty$-categories of constructible objects satisfy arc-descent.

\begin{theorem}\label{thm:arcdescentD} In settings (B) and (C), the functor $X\mapsto \mathcal D_\cons(X,\Lambda)$ defines an arc-sheaf of $\infty$-categories. It is a finitary arc-sheaf in setting (B).
\end{theorem}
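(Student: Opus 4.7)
The plan is threefold: reduce setting (C) to setting (B); in setting (B), combine the already-cited arc-descent results of Bhatt--Mathew and Gabber with the local nature of constructibility; and finally prove the finitary property in (B) by a standard approximation argument over cofiltered limits of qcqs schemes.

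In setting (B), for an arc-cover $Y \to X$ with \v{C}ech nerve $Y^\bullet$, I would prove
\[
\mathcal D_\cons(X, \Lambda) \xrightarrow{\sim} \lim_{\Delta} \mathcal D_\cons(Y^\bullet, \Lambda)
\]
by separately checking fully faithfulness and essential surjectivity. Fully faithfulness reduces to arc-descent for $R\sHom_\Lambda$ between constructible objects, which (since constructibility guarantees boundedness with finite-dimensional values) follows from the v-sheaf property of $\mathcal D_\et$ cited above, refined to arc-covers by the Bhatt--Mathew/Gabber comparison on spectra of valuation rings. For essential surjectivity, a descent datum gives an object $A \in \mathcal D_\et(X, \Lambda)$ by universal submersion descent; to conclude constructibility of $A$ it suffices to exhibit a constructible stratification on which $A$ is locally constant with perfect fibres. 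Since constructible stratifications and the condition of being locally constant with perfect fibres are both preserved and reflected under v-covers (and, crucially, under extensions of valuation rings with algebraically closed fraction field, where the arc-topology differs from the v-topology), this constructible stratification can be pushed down from $Y$.

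For setting (C), I reduce to (B) via the identifications $\mathcal D_\cons(X, \mathcal O_L) = \varprojlim_n \mathcal D_\cons(X, \mathcal O_L/\ell^n)$ and $\mathcal D_\cons(X, L) = \varinjlim_{L'} \mathcal D_\cons(X, L')$ over finite subextensions $L' \subset L$, together with $\mathcal D_\cons(X,L') = \mathcal D_\cons(X,\mathcal O_{L'}) \otimes_{\mathcal O_{L'}} L'$. Limits of arc-sheaves are arc-sheaves, handling the $\mathcal O_L$ case. The filtered colimit requires interchanging with the cosimplicial totalization in the descent condition; this is legitimate because constructible complexes in each $\mathcal D_\cons(Y^n,\Lambda)$ have bounded $\ell$-cohomological amplitude, and bounds propagate uniformly along the arc-\v{C}ech nerve, so the totalization truncates at a finite stage on constructible subcategories. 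For the finitary property in (B), let $X = \lim X_i$ be a cofiltered limit of qcqs schemes with affine transition maps; I would show $\colim_i \mathcal D_\cons(X_i, \Lambda) \xrightarrow{\sim} \mathcal D_\cons(X, \Lambda)$ by the classical approximation principle: every constructible stratification of $X$ descends to some $X_i$, every locally constant perfect $\Lambda$-module on a stratum descends, and morphisms between such objects are obtained by a filtered colimit of morphism spaces over $X_i$, all of which are standard consequences of approximation of finitely presented étale data (SGA 4) combined with the preservation of perfectness under pullback.

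The main obstacle is the descent of constructibility along arc-covers (the key step in setting (B)): this is the genuinely new content beyond v-descent, and it rests on the Bhatt--Mathew/Gabber analysis of how an arc-cover of a qcqs scheme differs from a v-cover only by extensions of valuation rings with algebraically closed fraction field, plus the observation that constructible sheaves do not detect this difference. The secondary technical point is the colimit/limit interchange in the reduction from (C) to (B), which is handled by the uniform boundedness of constructible complexes on the arc-\v{C}ech nerve.
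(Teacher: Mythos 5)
Your fully faithfulness and finitaryness arguments in setting (B) are fine and agree with the paper (both rest on Bhatt--Mathew's arc-descent for cohomology of torsion sheaves and the standard approximation argument). But your essential surjectivity step in (B) has a gap: you invoke universal submersion descent for $\mathcal D_\et(X,\Lambda)$ along an arbitrary arc-cover, whereas arc-covers are strictly more general than universal submersions (the implications run v-cover $\Rightarrow$ universal submersion $\Rightarrow$ arc-cover), and setting (A) is \emph{not} an arc-sheaf --- this is exactly Gabber's Example~\ref{ex:arcnotsubmersion}. So the descended object $A\in\mathcal D_\et(X,\Lambda)$ does not exist by that route. The missing step, which is how the paper proceeds, is to apply finitaryness/noetherian approximation first: a descent datum of constructible complexes along an arc-cover $Y\to X$ descends to a finitely presented arc-cover $Y_i\to X$ (writing $Y=\varprojlim_i Y_i$), and a finitely presented arc-cover is a v-cover, hence a universal submersion, so only then does Theorem~\ref{thm:univsubmersivedescent} (in its left-completed form) apply. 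Your parenthetical about the arc/v difference being governed by extensions of valuation rings with algebraically closed fraction field gestures at this, but it is not a substitute for the reduction.

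The more serious gap is setting (C) with $L$-coefficients. Effectivity of descent does not formally follow from the $\mathcal O_L$-case, and your claim that ``uniform boundedness'' lets the filtered colimit pass through the totalization does not address the actual obstruction: an object of $\lim_\Delta \mathcal D_\cons(Y^{\bullet/X},L)$ is a compatible family of $L$-constructible complexes, and even if each admits an $\mathcal O_L$-lattice, the descent data need not preserve any chosen lattice (the two pullbacks of a lattice to $Y\times_X Y$ are only isomorphic after inverting $\ell$), so one cannot manufacture a descent datum with $\mathcal O_L$-coefficients; only fully faithfulness with $L$-coefficients is formal from the integral case. This is precisely why the paper proves Proposition~\ref{prop:constructiblelattices} (the functor of $\mathcal O_L$-lattices is a finitary arc-sheaf admitting sections over an \'etale cover), then descends the resulting sheaf $\tilde Y$ of lattices along the cover to an arc-surjective finitary sheaf $\tilde X$ over $X$, uses finitaryness to replace $\tilde X\to X$ by a finitely presented arc-cover, refines such a cover up to universal homeomorphism by finite \'etale covers over a constructible stratification, and finally performs Galois descent via $(f_\ast A)^G$, using that $|G|$ is invertible in $L$. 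None of this mechanism appears in your proposal. Note also that the identification $\mathcal D_\cons(X,L)=\mathcal D_\cons(X,\mathcal O_L)\otimes_{\mathcal O_L}L$, which you take as an input, is Corollary~\ref{cor:integralstructureexists}, deduced in the paper from the same lattice proposition established inside this theorem's proof, so you cannot quote it here without an independent argument.
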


\begin{proof} In setting (B), this result is due to Bhatt--Mathew \cite[Theorem 5.4, Theorem 5.13]{BhattMathew}, at least when $\Lambda$ is finite. Their \cite[Theorem 5.4]{BhattMathew} applies in general, as does the argument that it is a finitary presheaf, so it remains to establish effectivity of descent. This will be done later in the full setting (A) for universal submersions, to which we can reduce by noetherian approximation.

In setting (C), one can formally reduce to the case of finite extensions $L/\mathbb Q_\ell$. In that case, the case of $\mathcal O_L$-coefficients follows via passage to limits from setting (B). This also formally implies that with $L$-coefficients, for any arc-cover $Y\to X$ with Cech nerve $Y^{\bullet/X}$, the map
\[
\mathcal D_\cons(X,L)\to \lim_\Delta \mathcal D_\cons(Y^{\bullet/X},L)
\]
is fully faithful. It remains to show effectivity of descent. For this, we first prove the following result regarding the existence of $\mathcal O_L$-lattices.

\begin{proposition}\label{prop:constructiblelattices} Let $L$ be an algebraic extension of $\mathbb Q_\ell$ and fix $A\in \mathcal D_\cons(X,L)$. Consider the functor taking an $X$-scheme $X'$ to the $\infty$-category of $A_0\in \mathcal D_\cons(X',\mathcal O_L)$ together with an identification $A_0[\tfrac 1\ell]\cong A|_{X'}$. This functor is a finitary arc-sheaf, and admits a section over an \'etale cover of $X$.
\end{proposition}

\begin{proof} It is clear that it is an arc-sheaf. To construct a section over an \'etale cover, one reduces to the case that $A$ is dualizable. In that case we can arrange that $A_0$ is also dualizable. Over a w-contractible pro-\'etale cover $X'\to X$, the complex $A_0$ is then equivalent to a perfect complex of $C(\pi_0 X',L)$-modules, cf.~\cite{HemoRicharzScholbach}. But as $\pi_0 X'$ is extremally disconnected, any finitely generated ideal of $C(\pi_0 X',L)$ is principal and isomorphic as $C(\pi_0 X',L)$-module to a direct summand of $C(\pi_0 X',L)$ generated by an idempotent. Any such idempotent is necessarily integral, from which one can deduce that any perfect complex of $C(\pi_0 X',L)$-modules can be extended to a perfect complex of $C(\pi_0 X',\mathcal O_L)$-modules (see \cite[Corollary 3.39]{HemoRicharzScholbach} for a closely related result). This then gives an integral structure over $X'$, and by finitaryness, this section over $X'$ is already defined over an \'etale cover of $X$.

It remains to prove that it is finitary. It is enough to do this arc-locally. By the first paragraph, we can always find a section over a pro-\'etale cover, so we can assume that $A=A_1[\tfrac 1\ell]$ for some $A_1\in D_\cons(X,\mathcal O_L)$ that we fix. Now let $X'=\varprojlim_i X_i'$ be an inverse limit of affine $X$-schemes $X_i'=\Spec R_i$. We want to show that the functor
\[
\varinjlim_i \{A_{0,i}\in \mathcal D_\cons(X_i',\mathcal O_L), A_{0,i}[\tfrac 1\ell]\cong A|_{X_i'}\}\to \{A_0\in \mathcal D_\cons(X',\mathcal O_L), A_0[\tfrac 1\ell]\cong A|_{X'}\}
\]
is an equivalence. First, it is fully faithful: For this, we can fix some $i$ and $A_{0,i}, A_{0,i}'\in \mathcal D_\cons(X_i',\mathcal O_L)$ with
\[
A_{0,i}[\tfrac 1\ell]\cong A|_{X_i'}\cong A_{0,i}'[\tfrac 1\ell].
\]
There is a map $A_{0,i}\to A_{0,i}'$ commuting with the isomorphism after inverting $\ell$ if and only if the map from $A_{0,i}$ to the cone of $A_{0,i}'\to A|_{X_i'}$ vanishes; and if such a map exists, then the set of maps forms a torsor under the maps from $A_{0,i}$ to the homotopy fiber of $A_{0,i}'\to A|_{X_i'}$ (i.e., the shift of the cone). Similar results hold after base change to any further $X'_j$ or $X'$. But the cone of $A_{0,i}'\to A|_{X_i'}$ is itself a finitary sheaf (namely $A_{0,i}'\dotimes \mathbb Q_\ell/\mathbb Z_\ell$, which is a complex of \'etale sheaves), and $A_{0,i}$ is constructible. In particular, if the map from $A_{0,i}|_{X'}$ to the cone of $A_{0,i}'|_{X'}\to A|_{X'}$ vanishes, then this happens already over some $X'_j$; and then the maps form a torsor under
\[
\Hom_{\mathcal D_\cons(X',\mathcal O_L)}(A_{0,i}|_{X'},[A_{0,i}'|_{X'}\to A|_{X'}]) = \varinjlim_{j\geq i} \Hom_{\mathcal D_\cons(X'_j,\mathcal O_L)}(A_{0,i}|_{X'_j},[A_{0,i}'|_{X'_j}\to A|_{X'_j}]),
\]
using finitaryness of the homotopy fiber, and constructibility of $A_{0,i}$.

It remains to prove essential surjectivity, so assume given $A_0\in \mathcal D_\cons(X',\mathcal O_L)$ with $A_0[\tfrac 1\ell]\cong A|_{X'}\cong A_1|_{X'}[\tfrac 1\ell]$. Multiplying by a power of $\ell$ if necessary, we can assume that the map $A_0\to A|_{X'}$ arises from a map $A_0\to A_1|_{X'}$. The cone $B$ of $A_0\to A_1|_{X'}$ is then in $\mathcal D_\cons(X',\mathcal O_L)$ and killed by some power of $\ell$, so lies in the $\infty$-category from setting (B). As such, $B$ arises via pullback from some $B_i\in \mathcal D_\cons(X'_i,\mathcal O_L)$ and also the map $A_1 |_{X'}\to B$ can be approximated by a map $A_1 |_{X_i'}\to B_i$ (after increasing $i$). Then the homotopy fibre $A_{0,i}$ of $A_1|_{X_i'}\to B_i$ gives the desired approximation of $A_0$ over $X_i'$.
\end{proof}

Now for effectivity of descent, consider some arc-cover $Y\to X$ and some $A\in \mathcal D_\cons(Y,L)$ equipped with descent data. Let $\tilde{Y}$ be the finitary arc-sheaf of anima parametrizing $\mathcal O_L$-lattices in $A$ as in the proposition. The descent data for $A$ induce descent data for $\tilde{Y}$, which thus descends to a finitary arc-sheaf of anima $\tilde{X}$ over $X$ (necessarily arc-surjective over $X$, as $\tilde{Y}\to Y\to X$ are arc-covers). Moreover, by the case of $\mathcal O_L$-coefficients already handled, the universal $A_0\in \mathcal D_\cons(-,\mathcal O_L)$ over $\tilde{Y}$ descends to $\tilde{X}$. These reductions mean that we only need to prove descent along $\tilde{X}\to X$. This is an arc-cover, but $\tilde{X}$ is a finitary arc-sheaf. This means that there is some finitely presented $X$-scheme $X'\to X$ that is also an arc-cover, and a section of $\tilde{X}\to X$ over $X'$. In other words, we can reduce to the case of descent along a finitely presented arc-cover.

By the fully faithfulness already proved, we are also free to pass to a stratification. But any finitely presented arc-cover can, up to universal homeomorphisms, be refined by finite \'etale covers over a constructible stratification -- this is clear at points, and then follows by a spreading out argument. In other words, one can reduce to the case that $Y\to X$ is finite \'etale, and then even a $G$-torsor for some finite group $G$. In that case, the descent of $A$ is given by $(f_\ast A)^G$ (using that $|G|$ is invertible in $L$).
\end{proof}

As promised above, the proof gives the following corollary.

\begin{corollary}\label{cor:integralstructureexists} For any algebraic extension $L$ of $\mathbb Q_\ell$, the fully faithful functor
\[
\mathcal D_\cons(X,\mathcal O_L)\otimes_{\mathcal O_L} L\to \mathcal D_\cons(X,L)
\]
is an equivalence.
\end{corollary}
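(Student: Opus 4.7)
Since fully faithfulness is established in the paragraph preceding the statement, the plan is to prove essential surjectivity of $\mathcal D_\cons(X,\mathcal O_L)\otimes_{\mathcal O_L} L\to \mathcal D_\cons(X,L)$: given $A\in \mathcal D_\cons(X,L)$, produce $A_0\in \mathcal D_\cons(X,\mathcal O_L)$ with $A_0\otimes_{\mathcal O_L} L\cong A$. First I would reduce to the case where $L/\mathbb Q_\ell$ is finite using the colimit descriptions $\mathcal D_\cons(X,L)=\varinjlim_{L'}\mathcal D_\cons(X,L')$ and $\mathcal D_\cons(X,\mathcal O_L)=\varinjlim_{L'}\mathcal D_\cons(X,\mathcal O_{L'})$ over finite subextensions $L'\subset L$. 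With this reduction in place, I would then invoke Proposition~\ref{prop:constructiblelattices}: the arc-sheaf $\tilde X$ on $X$-schemes parametrizing $\mathcal O_L$-lattices in $A$ is a finitary arc-sheaf that admits a section over an \'etale cover. By finitariness, such a section is already defined over a finitely presented \'etale cover $f\colon X'\to X$, giving a lattice $A_0'\in \mathcal D_\cons(X',\mathcal O_L)$ with $A_0'\otimes_{\mathcal O_L}L\cong f^\ast A$.

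Next I would port the descent strategy from the effectivity argument in the proof of Theorem~\ref{thm:arcdescentD}. By the fact that any finitely presented arc-cover can, up to universal homeomorphism, be refined by finite \'etale covers over a constructible stratification of $X$, one may assume that $f\colon X'\to X$ is a finite \'etale $G$-torsor. The goal is then to descend the lattice $A_0'$ along $f$ using arc-descent for $\mathcal D_\cons(-,\mathcal O_L)$, which in setting (C) is inherited from the (B) case through the identification $\mathcal D_\cons(-,\mathcal O_L)=\varprojlim_n \mathcal D_\cons(-,\mathcal O_L/\ell^n)$. Concretely, one considers the pushforward $B_0:=f_\ast A_0'\in \mathcal D_\cons(X,\mathcal O_L)$; since $f$ is finite \'etale, $B_0\otimes_{\mathcal O_L} L=f_\ast f^\ast A$, and because $|G|$ is invertible in $L$, the sheaf $A$ is a canonical direct summand of $f_\ast f^\ast A$ cut out by the idempotent $e=\frac{1}{|G|}\sum_{g\in G}g\in \mathrm{End}_L(f_\ast f^\ast A)$.

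The main obstacle, which I expect to be the crux of the argument, is to produce from this data an integral direct summand $A_0\hookrightarrow B_0$ with $A_0\otimes L\cong A$ when $\ell$ divides $|G|$ (so that $e$ is not visibly integral on the nose). My plan is to resolve this by exploiting the $\ell$-adic completeness of the endomorphism ring $\mathrm{End}_{\mathcal D_\cons(X,\mathcal O_L)}(B_0)$ implicit in the $\varprojlim_n$ description, lifting $e$ via Hensel's lemma from the idempotent produced by $e$ over $L$; equivalently, replace $A_0'$ by a $G$-equivariant derived analog of the sum of its $G$-translates $\sum_g g^\ast A_0'$ inside $f^\ast A$, which then descends directly by \'etale descent for $\mathcal D_\cons(-,\mathcal O_L)$ along the $G$-torsor. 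Either route yields the desired $A_0$, and essential surjectivity follows.
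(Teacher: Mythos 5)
Your reduction to $L/\mathbb Q_\ell$ finite and your use of Proposition~\ref{prop:constructiblelattices} to get a lattice over an \'etale cover, refined to a finite \'etale $G$-torsor $f\colon X'\to X$ after passing to a constructible stratification, matches the paper. But the crux step — descending the lattice when $\ell$ divides $|G|$ — is where your proposal breaks down, in both of the routes you sketch. First, the goal of producing an integral \emph{direct summand} $A_0\hookrightarrow B_0=f_\ast A_0'$ is not attainable in general: take $X$ connected with a surjection $\pi_1(X)\twoheadrightarrow G=\mathbb Z/\ell$ and $A=L$ the constant sheaf; then $B_0$ corresponds to the regular representation $\mathcal O_L[G]$, and the trivial lattice $\mathcal O_L$ is \emph{not} a direct summand of it (the augmentation does not split integrally), even though it is of course a lattice in the summand $L\subset L[G]$. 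So there is no integral idempotent to find, and in any case Hensel/idempotent-lifting applies to lifting idempotents along complete (or henselian) \emph{quotients}, not along the localization $\mathrm{End}_{\mathcal D_\cons(X,\mathcal O_L)}(B_0)\to \mathrm{End}_{\mathcal D_\cons(X,L)}(B_0[\tfrac 1\ell])$; $\ell$-adic completeness gives no lifting in that direction. Second, the ``sum of the $G$-translates of $A_0'$ inside $f^\ast A$'' is the morally correct classical move, but it has no meaning for an object of a stable $\infty$-category: $A_0'$ is a complex equipped with a map to $f^\ast A$, not a subobject of anything, and sums of translates (let alone coherent $G$-equivariance data on such a sum) are not defined at this level.

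This is precisely the gap the paper's proof fills by a further reduction you are missing: after arranging $X'\to X$ finite \'etale, $X$ and $X'$ connected, and $A$ dualizable, one shows that \emph{all truncations of $A$ are dualizable} (checked after pullback to the universal pro-finite \'etale cover $\tilde X\to X'\to X$, where $A$ becomes constant, using the integral structure over $X'$ to reduce modulo powers of $\ell$). One may then assume $A$ is concentrated in degree $0$, i.e.\ is given by a continuous representation $\pi_1(X)\to \mathrm{GL}_L(V)$ on a finite-dimensional $L$-vector space, and conclude by the classical fact that such a representation admits an invariant $\mathcal O_L$-lattice (the finite sum of the $\pi_1(X)$-translates of any lattice) — which is exactly your ``sum of translates'' idea, but carried out where it makes sense, for honest local systems rather than complexes. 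A smaller omission: when you pass to a constructible stratification you should note, as the paper does, that the image of $\mathcal D_\cons(X,\mathcal O_L)\otimes_{\mathcal O_L}L$ is stable under cones and shifts, so lattices on the strata can be glued back over $X$ via the $j_!$--$i_\ast$ triangles.
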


\begin{proof} We can assume that $L$ is finite over $\mathbb Q_\ell$. Take any $A\in \mathcal D_\cons(X,L)$. We note that the image of the functor is stable under cones and shifts. By Proposition~\ref{prop:constructiblelattices}, there is some \'etale cover $X'\to X$ over which an integral structure exists. Passing to a constructible stratification of $X$, we can assume that $X'\to X$ is finite \'etale and that $A$ is dualizable. Moreover, by the finitaryness aspect of Proposition~\ref{prop:constructiblelattices}, we can assume that $X$ is connected (as then any integral structure over a connected component spreads to an open and closed neighborhood). We can also assume that $X'$ is connected. We claim that all truncations of $A$ are still dualizable. This can be checked after pullback to the universal pro-finite \'etale cover $\tilde{X}\to X'\to X$, where $A$ becomes constant (using the integral structure over $X'$, this can be checked modulo powers of $\ell$, where everything reduces to usual finite \'etale local systems), and hence all truncations of $A$ are still constant sheaves on finitely dimensional $L$-vector spaces, and thus dualizable. Thus, we can assume that $A$ is concentrated in degree $0$. Then $A|_{\tilde{X}}$ is the constant sheaf on a finite-dimensional $L$-vector space $V$, and the descent data to $X$ is given by a continuous representation $\pi_1(X)\to \mathrm{GL}_L(V)$. Any such representation admits an invariant $\mathcal O_L$-lattice, finishing the proof.
\end{proof}

\begin{remark}\label{rem:largecategoryC} It is occasionally helpful to embed also the categories in setting (C) into larger categories that admit internal Hom's, direct images, and exceptional inverse images. This can be done: Assume first that $L$ is a finite extension of $\mathbb Q_\ell$. In that case, one can embed $\mathcal D_\cons(X,\mathcal O_L)$ into $\varprojlim_n \mathcal D_\et(X,\mathcal O_L/\ell^n)$, and this admits all six operations (via passage to limits). Now for general $L$ we can take $\varinjlim_{L'\subset L} \mathcal D_\et(X,\mathcal O_{L'})$ as $L'$ runs over finite subextensions of $L$, and with $L$-coefficients, we can formally invert $\ell$ and take the idempotent completion.
\end{remark}

\section{Universal local acyclicity}

In this section, we discuss universal local acyclicity, essentially following the approach of Lu--Zheng \cite{LuZhengULA}, but with a small shift in perspective. 

Fix any qcqs base scheme $S$ in which $\ell$ is invertible, and work in one of the settings (A), (B), or (C); in particular, we have fixed some coefficient $\mathbb Z_\ell$-algebra $\Lambda$, and abbreviate $D(X):=D(X,\Lambda)$ (where $D(X,\Lambda)$ is either $D_\et(X,\Lambda)$ or $D_\cons(X,\Lambda)$). We define a symmetric monoidal $2$-category $\mathcal C_S$ as follows. Its objects are schemes $f: X\to S$ separated and of finite presentation over $S$. The category of morphisms $\mathrm{Fun}_{\mathcal C_S}(X,Y)$ is given by $D(X\times_S Y)$; and composition is given by convolution, i.e.
\[
D(X\times_S Y)\times D(Y\times_S Z)\to D(X\times_S Z): (A,B)\mapsto A\star B = R\pi_{XZ!}(\pi_{XY}^\ast A\dotimes_\Lambda \pi_{YZ}^\ast B)
\]
where $\pi_{XY},\pi_{XZ},\pi_{YZ}$ are the obvious projections defined on $X\times_S Y\times_S Z$. The base change formula ensures that this gives an associative composition law. The identities are given by $R\Delta_{X/S!} \Lambda = R\Delta_{X/S\ast} \Lambda$, where $\Delta_{X/S}: X\to X\times_S X$ is the diagonal, which is a finitely presented closed immersion.

The symmetric monoidal structure on $\mathcal C_S$ is given on objects by $X\boxtimes Y = X\times_S Y$, and similarly on morphisms by exterior tensor products. Any object of $\mathcal C_S$ is dualizable: The dual of $X$ is given by $X$ itself, with unit $S\to X\times_S X$ and counit $X\times_S X\to S$ both given by $R\Delta_{X/S!} \Lambda\in D(X\times_S X)$. In particular, there are internal Hom's, and the internal Hom from $X$ to $Y$ is $X\times_S Y$.

We note that $\mathcal C_S$ is naturally isomorphic to the opposite $2$-category $\mathcal C_S^{\mathrm{op}}$ which exchanges the directions of the $1$-morphisms (but not of the $2$-morphisms), as $D(X\times_S Y)$ is naturally symmetric in $X$ and $Y$.

In \cite{FarguesScholze}, $\mathcal C_S$ was considered as a bare $2$-category, and the notion of adjoint maps in $2$-categories was employed to characterize universal local acyclicity. This could be done here again. However, we prefer to follow more closely \cite{LuZhengULA}. Indeed, we can also consider the (co)lax (co)slice $2$-category $\mathcal C'_S = _{S\backslash} \mathcal C_S$, which inherits a symmetric monoidal structure. Its objects are given by pairs $(X,A)$ where $f: X\to S$ is separated of finite presentation as before, and $A\in D(X) = \mathrm{Fun}_{\mathcal C_S}(S,X)$. A morphism $g: (X,A)\to (Y,B)$ in $\mathcal C'_S$ is given by some $C\in D(X\times_S Y)=\mathrm{Fun}_{\mathcal C_S}(X,Y)$ together with a map
\[
R\pi_{Y!}(\pi_X^\ast A\dotimes_\Lambda C)\to B,
\]
where $\pi_X,\pi_Y$ are the natural projections on $X\times_S Y$.

Then in setting (A), the symmetric monoidal $2$-category of cohomological correspondences (as in \cite{LuZhengULA}) has a natural symmetric monoidal functor to $\mathcal C'_S$, induced by sending a correspondence $c: C\to X\times_S Y$ to $Rc_! \Lambda\in D(X\times_S Y)$. Moreover, in setting (A), there are internal Hom's in $\mathcal C'_S$, where the internal Hom from $(X,A)$ to $(Y,B)$ is given by $(X\times_S Y,R\sHom(\pi_X^\ast A,R\pi_Y^!B))$. In fact, this already defines an internal Hom on the symmetric monoidal $2$-category considered by Lu--Zheng. This implies that $(X,A)$ is dualizable in Lu--Zheng's symmetric monoidal $2$-category if and only if it is dualizable in $\mathcal C'_S$ -- dualizability is then equivalent to the map $V\otimes \sHom(V,1)\to \sHom(V,V)$ being an isomorphism.

From setting (B) to setting (A), there is a fully faithful symmetric monoidal functor, which in particular preserves dualizable objects. We will see that all dualizable objects $(X,A)$ in fact lie in the essential image of (B) and are dualizable as objects in there, so these settings give rise to the same dualizable objects. For setting (C), we will develop techniques to reduce to setting (B).

Here is a general proposition that explains the relation between the approaches of \cite{LuZhengULA} and \cite{FarguesScholze}.

\begin{proposition}\label{prop:dualizablevsadjoint} Let $\mathcal C$ be a symmetric monoidal $2$-category with tensor unit $1\in \mathcal C$, and assume that all objects of $\mathcal C$ are dualizable. Let $\mathcal C'= _{1\backslash} \mathcal C$ be the lax coslice, which is itself a symmetric monoidal $2$-category. Then a morphism $f: 1\to X$ in $\mathcal C$ is a right adjoint if and only if $(X,f)\in \mathcal C'$ is dualizable.
\end{proposition}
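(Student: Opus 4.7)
The plan is to unpack dualizability in $\mathcal C'$ into explicit data and match it with adjunction data in $\mathcal C$, using the hypothesis that $X$ is already dualizable. The forgetful functor $U: \mathcal C' \to \mathcal C$ sending $(X, f) \mapsto X$ is symmetric monoidal, so it preserves dualizability. Since $U$ reflects the underlying $1$-morphism data, if $(X^\vee{}', f^\vee)$ is a dual of $(X, f)$ in $\mathcal C'$ with unit $c$ and counit $e$, then $(X^\vee{}', c, e)$ is a dual of $X$ in $\mathcal C$; after adjusting by the canonical equivalence with the chosen dual (which exists by hypothesis), I can assume that $X^\vee{}' = X^\vee$, $c = \mathrm{coev}_X$, and $e = \mathrm{ev}_X$ are the canonical duality data for $X$ in $\mathcal C$. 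The remaining data of the dual in $\mathcal C'$ is then precisely a morphism $f^\vee : 1 \to X^\vee$ together with two $2$-morphisms $\alpha : \mathrm{coev}_X \Rightarrow f \otimes f^\vee$ and $\beta : \mathrm{ev}_X \circ (f^\vee \otimes f) \Rightarrow \mathrm{id}_1$, subject to the triangle identities in $\mathcal C'$.

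Next I would translate $(f^\vee, \alpha, \beta)$ into adjunction data. The duality $X \dashv X^\vee$ in $\mathcal C$ gives a canonical equivalence $\mathrm{Fun}_{\mathcal C}(1, X^\vee) \simeq \mathrm{Fun}_{\mathcal C}(X, 1)$ sending $f^\vee \mapsto g := \mathrm{ev}_X \circ (f^\vee \otimes \mathrm{id}_X)$, and via this equivalence $\mathrm{ev}_X \circ (f^\vee \otimes f)$ is canonically $2$-isomorphic to $g \circ f$, so $\beta$ becomes a $2$-morphism $\epsilon : g \circ f \Rightarrow \mathrm{id}_1$, the candidate counit of an adjunction. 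For the candidate unit, whisker $\alpha$ by $\mathrm{id}_X$ on the right and then post-compose with $\mathrm{id}_X \otimes \mathrm{ev}_X$; using the triangle identity $(\mathrm{id}_X \otimes \mathrm{ev}_X) \circ (\mathrm{coev}_X \otimes \mathrm{id}_X) \simeq \mathrm{id}_X$ this produces a $2$-morphism $\eta : \mathrm{id}_X \Rightarrow f \circ g$. These are visibly inverse operations (in both the forward and converse direction of the proposition), so the assignment $(f^\vee, \alpha, \beta) \leftrightarrow (g, \eta, \epsilon)$ is a bijection on data.

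Finally I would verify that the two triangle identities in $\mathcal C'$ for $(X, f) \dashv (X^\vee, f^\vee)$ correspond exactly to the two triangle identities for $g \dashv f$. Each triangle identity in $\mathcal C'$ asserts that a certain composite $1$-morphism in $\mathcal C'$ is equivalent to the identity, and since a $1$-morphism in $\mathcal C'$ is a pair (underlying $1$-morphism in $\mathcal C$, plus a compatibility $2$-cell), each such identity unpacks into two pieces: the underlying triangle identity for $X \dashv X^\vee$ in $\mathcal C$ (which holds automatically by hypothesis), and an equation on the $2$-morphism data, which, after running the translations of the previous paragraph through the definition of composition in $\mathcal C'$, is precisely one of the snake identities for the candidate adjunction $g \dashv f$. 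The converse direction then consists of reversing these bijections: starting from $(g, \eta, \epsilon)$, define $f^\vee = (g \otimes \mathrm{id}_{X^\vee}) \circ \mathrm{coev}_X$ and transport $\eta, \epsilon$ to $\alpha, \beta$ by the inverse of the above maps.

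The only real obstacle is the $2$-categorical bookkeeping: one must check that composition of $1$-morphisms in the lax coslice (which combines composition in $\mathcal C$ with whiskering of the compatibility $2$-cells) matches up with the pasting diagrams used to state the triangle identities of a $2$-categorical adjunction. This is a routine but fiddly diagram chase, and is most transparent by drawing the two triangle composites as string diagrams and observing that the $\mathcal C$-components trivialize via the duality of $X$, leaving behind exactly the adjunction snake identities.
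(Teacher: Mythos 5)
Your proposal is correct and follows essentially the same route as the paper's proof: identify the underlying dual as $X^\vee$ via the symmetric monoidal forgetful functor, convert the extra datum $f^\vee\colon 1\to X^\vee$ into $g\colon X\to 1$ using dualizability of $X$, and translate the unit/counit $2$-cells of the duality in $\mathcal C'$ into the unit and counit of an adjunction $g\dashv f$, with the triangle identities matching under this dictionary and the converse obtained by reversing the steps. Your write-up is somewhat more explicit about the formulas for $\eta$, $\epsilon$ and $f^\vee$, but the argument, including the final ``routine unraveling'' left as a diagram chase, is the one given in the paper.
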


\begin{proof} Assume that $(X,f)\in \mathcal C'$ is dualizable. As the forgetful functor $\mathcal C'\to \mathcal C$ is symmetric monoidal, its dual is of the form $(X^\ast,g^\ast)$ where $X^\ast\in \mathcal C$ is the dual of $X$, and $g^\ast: 1\to X^\ast$ is some map. The map $g^\ast$ is equivalent to a map $g: X\to 1$ by dualizability of $X$. We claim that $g$ is a left adjoint of $f$. To see this, we have to produce $2$-morphisms $\alpha: \mathrm{id}_1\to fg$ and $\beta: gf\to \mathrm{id}_X$ such that the composites
\[
f\xrightarrow{\alpha f} fgf\xrightarrow{f\beta} f,\ g\xrightarrow{g\alpha} gfg\xrightarrow{\beta f} g
\]
are the identity. But the dualizability of $(X,f)$ gives unit and counit maps
\[
(1,\mathrm{id}_1)\to (X\otimes X^\ast,f\otimes g^\ast), (X\otimes X^\ast,f\otimes g^\ast)\to (1,\mathrm{id}_1)
\]
satisfying similar conditions. The first map necessarily lies over the unit map $1\to X\otimes X^\ast$, and is then given by a $2$-morphism from the unit map $1\to X\otimes X^\ast$ to $f\otimes g^\ast: 1\to X\otimes X^\ast$. By dualizability of $X$, this is equivalent to a map from the identity on $X$ to $gf$. A similar analysis applies to the second map. Unraveling all the structures then shows that $g$ is a left adjoint of $f$. For the converse direction, one reverses all the steps.
\end{proof}

\begin{definition}\label{def:ULA} Let $f: X\to S$ be a separated map of finite presentation and $A\in D(X)$. Then $A$ is universally locally acyclic if
\[
A\in D(X)= \mathrm{Fun}_{\mathcal C_S}(S,X)
\]
is a right adjoint in $\mathcal C_S$; equivalently, if $(X,A)\in \mathcal C'_S$ is dualizable.
\end{definition}

We note that by the existence of internal Hom's in $\mathcal C'_S$ in setting (A), we get the following characterization.

\begin{proposition}\label{prop:ULAfirstchar} Let $f: X\to S$ be a separated map of finite presentation and $A\in D(X)$. Assume setting (A). Then $A$ is $f$-universally locally acyclic if and only if the map
\[
\pi_1^\ast \mathbb D_{X/S}(A)\dotimes_\Lambda \pi_2^\ast A\to R\sHom_\Lambda(\pi_1^\ast A,R\pi_2^! A)
\]
is an isomorphism in $D(X\times_S X)$, where $\mathbb D_{X/S}(A)$ denotes the relative Verdier dual, and $\pi_i: X\times_S X\to X$ the two projections.
\end{proposition}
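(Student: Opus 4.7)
The plan is to deduce the statement directly from the abstract dualizability criterion in any closed symmetric monoidal category, applied to $\mathcal C'_S$. By Definition~\ref{def:ULA}, $A$ is universally locally acyclic iff $(X,A)$ is dualizable in $\mathcal C'_S$, and in setting (A) the paper has already observed that $\mathcal C'_S$ is closed with internal Hom
\[
\sHom\bigl((X,A),(Y,B)\bigr) \;=\; \bigl(X\times_S Y,\; R\sHom(\pi_X^\ast A, R\pi_Y^! B)\bigr).
\]
Combined with the recollection, made just before the proposition, that in a closed symmetric monoidal category $V$ is dualizable iff the natural evaluation map $\sHom(V,1)\otimes V\to \sHom(V,V)$ is an isomorphism (with dual $\sHom(V,1)$), the proposition will follow once we identify each of the three objects entering this evaluation map with the data appearing in the displayed formula.

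First, I would identify the tensor unit of $\mathcal C'_S$ as $(S,\Lambda)$, corresponding to the identity $1$-endomorphism of $S$ in $\mathcal C_S$, so that applying the internal-Hom formula with $Y=S$ and using $\pi_X=\mathrm{id}_X$, $\pi_S=f$ gives
\[
\sHom\bigl((X,A),(S,\Lambda)\bigr) \;=\; \bigl(X,\; R\sHom(A,Rf^!\Lambda)\bigr) \;=\; (X,\mathbb D_{X/S}(A)).
\]
Similarly $\sHom((X,A),(X,A)) = (X\times_S X,\, R\sHom(\pi_1^\ast A, R\pi_2^! A))$, and the tensor product in $\mathcal C'_S$ gives $\sHom((X,A),(S,\Lambda))\otimes(X,A) = (X\times_S X,\, \pi_1^\ast\mathbb D_{X/S}(A)\otimes_\Lambda \pi_2^\ast A)$, since the monoidal structure on $\mathcal C'_S$ is $X\boxtimes Y = X\times_S Y$ with exterior tensor on the sheaf data.

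Second, I would unravel the abstract evaluation $2$-morphism $\sHom(V,1)\otimes V\to \sHom(V,V)$ in $\mathcal C'_S$. Both source and target of this morphism lie over the identity $1$-morphism $X\times_S X\to X\times_S X$ in the underlying $2$-category $\mathcal C_S$, so by the definition of morphisms in the lax coslice the data reduces to a map of sheaves on $X\times_S X$. Chasing through the construction — which is the mate of the universal evaluation $\sHom(V,1)\otimes V\to 1$ tensored with $V$ — one recognises this sheaf-level map as the tautological one
\[
\pi_1^\ast \mathbb D_{X/S}(A)\dotimes_\Lambda \pi_2^\ast A\to R\sHom_\Lambda(\pi_1^\ast A, R\pi_2^! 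A)
\]
obtained by adjunction from the projection formula and the counit $A\dotimes R\sHom(A,Rf^!\Lambda)\to Rf^!\Lambda$.

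The only real obstacle is verifying the naturality assertion in the last step: that the abstract evaluation map in $\mathcal C'_S$ literally descends, under the above identifications, to the concrete map written in the proposition. This is pure bookkeeping once one has set up the lax-coslice conventions and the internal-Hom adjunction, but needs a careful translation between the $2$-categorical composition law $A\star B = R\pi_{XZ!}(\pi_{XY}^\ast A\dotimes \pi_{YZ}^\ast B)$ and the ordinary tensor/Hom adjunction on $D(X\times_S X)$. Once this identification is in place, the biconditional of the proposition is exactly the general dualizability criterion, concluding the proof.
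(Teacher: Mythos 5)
Your proposal is correct and follows the same route as the paper: the paper's proof consists precisely of invoking the criterion that an object $V$ in a symmetric monoidal ($2$-)category with internal Hom's is dualizable if and only if $V\otimes\sHom(V,1)\to\sHom(V,V)$ is an isomorphism, and then unraveling via the internal Hom formula $\sHom((X,A),(Y,B))=(X\times_S Y,R\sHom(\pi_X^\ast A,R\pi_Y^!B))$ in $\mathcal C'_S$ stated earlier. Your identifications of the unit, the dual, and the evaluation map are exactly the ``unraveling'' the paper leaves implicit.
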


\begin{proof} Indeed, an object $Y$ in a symmetric monoidal ($2$-)category with internal Hom's is dualizable if and only if the map $Y\otimes \sHom(Y,1)\to \sHom(Y,Y)$ is an isomorphism. Unraveling, we get this condition.
\end{proof}

In particular, using this proposition one verifies in setting (A) some basic properties of universal local acyclicity, such as that if $h: Y\to X$ is a map of separated $S$-schemes of finite presentation, then $Rh_\ast$ preserves universally locally acyclic sheaves if $h$ is proper, and $h^\ast$ preserves universally locally acyclic sheaves if $h$ is smooth. Also, if $g: S\to S'$ is a smooth map and $A$ is $f$-universally locally acyclic for some $f: X\to S$ as above, then $A$ is also $g\circ f$-universally locally acyclic. To see this in setting (A), use that $A$ being $f$-universally locally acyclic implies that for any $g: Y\to S$ separated of finite presentation and $B\in D(Y)$, the map
\[
\pi_1^\ast \mathbb D_{X/S}(A)\dotimes_\Lambda \pi_2^\ast B\to R\sHom_\Lambda(\pi_1^\ast A,R\pi_2^! B)
\]
om $D(X\times_S Y)$ is an isomorphism (as follows from dualizability of $A$). Now apply this to $Y=X\times_{S'} S$ and $B$ the pullback of $A$ to verify the condition of Proposition~\ref{prop:ULAfirstchar} for $A$ being $g\circ f$-universally locally acyclic.

We will only use these properties in setting (A), and only in the proof of Theorem~\ref{thm:nearbycycles}. However, these results also hold in settings (B) and (C): Indeed, universal local acyclicity in settings (A) and (B) is the same notion, while setting (C) reduces to setting (B) at least v-locally on $S$, using the integral structures of Proposition~\ref{prop:ULAintegralstructure}.

With this definition, one can prove the following properties. Here in setting (B) and (C) we denote by
\[
\mathbb D_{X/S}(A) = R\sHom_{D(X_\proet,\Lambda)}(A,Rf^! \Lambda)\in D(X_\proet,\Lambda)
\]
the internal Hom in $X_\proet$, where $Rf^! \Lambda$ comes from setting (A) in setting (B), and in setting (C) is defined via limits from setting (B).

\begin{proposition}\label{prop:basicpropertiesULA} Let $f: X\to S$ be a separated map of finite presentation and $A\in D(X)$ be $f$-universally locally acyclic.
\begin{enumerate}
\item[{\rm (i)}] Let $S'\to S$ be any map of schemes, and $f': X'=X\times_S S'\to S'$ the base change of $f$, and $A'\in D(X')$ the preimage of $A$. Then $A'$ is $f'$-universally locally acyclic.
\item[{\rm (ii)}] The relative Verdier dual $\mathbb D_{X/S}(A)=R\sHom_\Lambda(A,Rf^! \Lambda)$ of $A$ lies in $D(X)\subset D(X_\proet,\Lambda)$ and is $f$-universally locally acylic, and $(X,\mathbb D_{X/S}(A))$ is the dual of $(X,A)$ in $\mathcal C'_S$. In particular, the biduality map
\[
A\to \mathbb D_{X/S}(\mathbb D_{X/S}(A))
\]
is an isomorphism, and the formation of $\mathbb D_{X/S}(A)$ commutes with any base change in $S$.
\item[{\rm (iii)}] In setting (A), the complex $A$ is perfect-constructible.
\item[{\rm (iv)}] In setting (A), for any $(Y,B)\in \mathcal C'_S$, the map
\[
\pi_X^\ast A\dotimes_\Lambda \pi_Y^\ast B\to R\sHom_\Lambda(\pi_X^\ast \mathbb D_{X/S}(A),R\pi_Y^! B)
\]
is an isomorphism.
\item[{\rm (v)}] For any geometric point $\overline{x}\to X$ with image $\overline{s}\to S$, and generization $\overline{t}$ of $\overline{s}$, the maps
\[
A_{\overline{x}} = R\Gamma(X_{\overline{x}},A)\to R\Gamma(X_{\overline{x}}\times_{S_{\overline{s}}} S_{\overline{t}},A)\to R\Gamma(X_{\overline{x}}\times_{S_{\overline{s}}} \overline{t},A)
\]
are isomorphisms.
\end{enumerate}

In particular, condition (v) holds after any base change, so $A$ is universally locally acyclic in the usual sense.
\end{proposition}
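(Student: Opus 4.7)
The plan is to treat the five parts by consistently regarding $A$ as giving a dualizable object $(X,A)$ in $\mathcal{C}'_S$, and to feed this through the internal Hom structure available in setting (A). Parts (i), (ii), (iv) will be essentially formal consequences of dualizability in a closed symmetric monoidal 2-category; part (v) will come from explicit base change; and part (iii) will require the most work.

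For (i), pullback along $S'\to S$ extends to a symmetric monoidal 2-functor $\mathcal{C}_S\to\mathcal{C}_{S'}$ (base change preserves convolution thanks to proper base change for $Rf_!$), hence a symmetric monoidal $\mathcal{C}'_S\to\mathcal{C}'_{S'}$. Such functors preserve dualizable objects and their duals, which also gives the base-change compatibility in (ii). For (ii) and (iv), work first in setting (A). The internal Hom in $\mathcal{C}'_S$ identified in the text, $\sHom((X,A),(Y,B))=(X\times_S Y, R\sHom(\pi_X^{\ast} A, R\pi_Y^! B))$, specializes at $(Y,B)=(S,\Lambda)$ to $(X,\mathbb{D}_{X/S}(A))$. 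Since the dual of a dualizable object is dualizable with the original as its double dual, this immediately yields both the ULA of $\mathbb{D}_{X/S}(A)$ and the biduality statement. For (iv), dualizability gives a canonical isomorphism $(X,A)^\vee\otimes (Y,B)\xrightarrow{\sim}\sHom((X,A),(Y,B))$ in $\mathcal{C}'_S$ for every $(Y,B)$; unpacking the tensor product as external product on pullbacks and the internal Hom as above rewrites this as exactly the displayed map. Finally, to transport (ii) and (iv) to settings (B) and (C), I invoke the identification of ULA objects across settings noted right after Proposition~\ref{prop:ULAfirstchar}: setting (B) agrees with setting (A), and setting (C) reduces v-locally on $S$ to setting (B) via the integral structures of Proposition~\ref{prop:ULAintegralstructure}.

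For (v), by (i) I replace $S$ with its strict henselization $S_{\bar{s}}$ at $\bar{s}$. The generization $\bar{t}$ then corresponds to an actual point of $S_{\bar{s}}$, and one can further base change along a valuation ring trait inside $S_{\bar{s}}$ joining $\bar{s}$ to $\bar{t}$. In this setup the displayed maps express that the stalk $A_{\bar{x}}$ is insensitive to restricting first to the generic fiber; this can be extracted by pairing the unit/counit witnessing the dualizability of $(X,A)$ with the stalk functor $R\Gamma(X_{\bar{x}},-)$, using (ii) to control the dual side, and then noting that the strict henselization $X_{\bar{x}}$ itself can be expressed as a cofiltered limit of étale neighborhoods to commute $R\Gamma$ with the relevant limits.

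The main obstacle is part (iii): abstract dualizability in $\mathcal{C}'_S$ does not visibly force constructibility of the underlying complex. I see two plausible routes. The first is to use Proposition~\ref{prop:ULAfirstchar} and restrict along the diagonal $\Delta_{X/S}$ to obtain $A\dotimes_\Lambda \mathbb{D}_{X/S}(A)\cong R\sHom_\Lambda(A,Rf^!A)$ in $D(X)$; pulling back further to a geometric point $\bar{x}\to X$ forces $A_{\bar{x}}$ to be a perfect complex of $\Lambda$-modules, after which constructibility can be deduced by spreading out along a Noetherian approximation together with the local-constancy criterion on fibers. The second, more classical route is to combine (v) (now established) applied to both $A$ and $\mathbb{D}_{X/S}(A)$, giving a two-sided generization condition, and then invoke a constructibility criterion of the kind developed in \cite{SGA412} or repackaged in \cite{LuZhengULA}. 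Either way, once constructibility is in hand, the concluding statement that (v) holds universally is automatic from (i) plus (v), recovering the classical definition of universal local acyclicity.
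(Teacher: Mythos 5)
Parts (i), (ii) and (iv) are fine and match the paper's argument (symmetric monoidal pullback functors, the internal Hom in $\mathcal C'_S$ specialized at $(S,\Lambda)$, and unwinding $(X,A)^\vee\otimes(Y,B)\cong\sHom((X,A),(Y,B))$). The genuine gap is part (iii), where neither of your two routes works as stated. In route 1, restricting the isomorphism of Proposition~\ref{prop:ULAfirstchar} along the diagonal via $\Delta_{X/S}^\ast$ does not produce $R\sHom_\Lambda(A,Rf^!A)$: pullback does not commute with $R\sHom$, and the useful move is to apply $R\Delta_{X/S}^!$ (as the paper does), which turns the right-hand side into $R\sHom_\Lambda(A,B)$ on $X$. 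More seriously, even granting perfectness of all stalks $A_{\overline{x}}$, that is far from perfect-constructibility -- the whole content is local constancy along a constructible stratification, and your ``spreading out along a Noetherian approximation together with the local-constancy criterion on fibers'' is exactly the unproved point. Route 2 simply defers to an unspecified ``constructibility criterion'' in \cite{SGA412} or \cite{LuZhengULA}; no such ready-made criterion is established in the paper, and invoking one here is circular with what you are asked to prove. The paper's actual mechanism is different and worth knowing: using Theorem~\ref{thm:arcdescentD} one checks constructibility v-locally, so one may assume every connected component of $S$ is the spectrum of an absolutely integrally closed valuation ring; then Lemma~\ref{lem:finitecohomdim} (Gabber) gives finite $\ell$-cohomological dimension, so perfect-constructible objects are exactly the compact objects of $\mathcal D_\et(X,\Lambda)$; and compactness of $A$ follows from dualizability, since $R\Hom(A,B)\cong R\Gamma\bigl(X,R\Delta_{X/S}^!(\pi_1^\ast\mathbb D_{X/S}(A)\dotimes_\Lambda\pi_2^\ast B)\bigr)$ commutes with direct sums in $B$.

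Two smaller points. Your argument for (v) (``pairing the unit/counit with the stalk functor'') is too vague to check; the clean route, which is the paper's, is to specialize (iv) to $B=\Lambda$ pulled back along $g:Y\to S$, push forward by $R\pi_{X\ast}$ to get $A\dotimes_\Lambda Rg_\ast\Lambda\xrightarrow{\sim}R\pi_{X\ast}A|_{X\times_SY}$, extend to arbitrary qcqs $Y/S$ by a limit argument (note $S_{\overline{t}}$ and $\overline{t}$ are not finitely presented over $S_{\overline{s}}$, so this limit step is needed), and then take stalks. Finally, your transfer of (ii) and (v) to setting (C) with rational coefficients is underdeveloped: (ii) does descend through the arc-cover supplied by Proposition~\ref{prop:ULAintegralstructure}, but the paper explicitly notes that (v) is \emph{not} amenable to arc-descent and instead uses the finitely presented proper cover from that proposition, an $\ell$-adically completed tensor product, and totalization of proper pushforwards along the Cech nerve -- none of which appears in your sketch.
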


We note that in many of the proofs, the case of setting (C) with rational coefficients is the hardest case. The reader is advised to omit that case on first reading; in particular, this is required to avoid any apparent vicious circles.

\begin{proof} Part (i) is a consequence of the observation that the pullback functors $\mathcal C_S\to \mathcal C_{S'}: X\mapsto X\times_S S'$ (and the induced functor $\mathcal C_S'\to \mathcal C_{S'}'$) are symmetric monoidal, and symmetric monoidal functors preserve dualizable objects. In setting (A), part (ii) follows from the description of internal Hom's in $\mathcal C_S'$. This formally gives the result in setting (B) as well, and in setting (C) for integral coefficients by reducing to a finite extension of $\mathbb Q_\ell$ and then via limits to setting (B). Setting (C) with rational coefficients is addressed later.

For part (iii), note that by Theorem~\ref{thm:arcdescentD} we can argue v-locally on $S$, so we can assume that every connected component of $S$ is the spectrum of an absolutely integrally closed valuation ring. In that case $X$ has finite $\ell$-cohomological dimension by Lemma~\ref{lem:finitecohomdim} below, and so perfect-constructibility is equivalent to compactness in $D_\et(X,\Lambda)$. But by dualizability of $A$, the map
\[
\pi_X^\ast \mathbb D_{X/S}(A)\dotimes_\Lambda \pi_Y^\ast B\to R\sHom_\Lambda(\pi_X^\ast A,R\pi_Y^! B)
\]
is an isomorphism for any $(Y,B)$; in particular, applying this in case $Y=X$ and taking $R\Delta_{X/S}^!$, we find
\[
R\Delta_{X/S}^!(\pi_1^\ast \mathbb D_{X/S}(A)\dotimes_\Lambda \pi_2^\ast B)\cong R\sHom_\Lambda(A,B),
\]
and thus
\[
R\Hom_{D_\et(X,\Lambda)}(A,B)\cong R\Gamma(X,R\Delta_{X/S}^!(\pi_1^\ast \mathbb D_{X/S}(A)\dotimes_\Lambda \pi_2^\ast B)).
\]
Now the functor on the right commutes with all direct sums in $B$, and hence $A$ is compact, as desired.

Part (iv) follows from the first displayed formula in the previous paragraph, applied to $\mathbb D_{X/S}(A)$, using also (ii). For part (v) in setting (A), we first specialize part (iv) to $B=\Lambda$ for any separated $g: Y\to S$ of finite presentation, and apply $R\pi_{X\ast}$. Then the left-hand side becomes $R\pi_{X\ast} A|_{X\times_S Y}$, while the right-hand side becomes
\[
R\sHom_\Lambda(\mathbb D_{X/S}(A),R\pi_{X\ast} R\pi_Y^! \Lambda) = R\sHom_\Lambda(\mathbb D_{X/S}(A),Rf^! Rg_\ast \Lambda).
\]
Applying part (iv) again for $(S,Rg_\ast \Lambda)$, we see that the map
\[
A\dotimes_\Lambda Rg_\ast \Lambda\to R\sHom_\Lambda(\mathbb D_{X/S}(A),Rf^! Rg_\ast \Lambda)
\]
is also an isomorphism. In total, we see that the natural map
\[
A\dotimes_\Lambda Rg_\ast \Lambda\to R\pi_{X\ast} A|_{X\times_S Y}
\]
is an isomorphism. A priori, this holds for all separated $Y$ of finite presentation, but then by passage to limits it follows for all (qcqs) $S$-schemes $Y$. In particular, after base changing to $S_{\overline{s}}$, we can apply it to $Y=S_{\overline{t}}$ or $Y=\overline{t}$. Taking stalks of this isomorphism at geometric points $\overline{x}\to X$ over $\overline{s}\to S$ then proves (v) in setting (A). This formally gives the result also in setting (B), and in setting (C) for integral coefficients by passage to limits.

It remains to prove parts (ii) and (v) in setting (C) with rational coefficients. Note first that the result is automatic if $A\in \mathcal D_\cons(X,L)$ is of the form $A_0[\tfrac 1\ell]$ where $A_0\in \mathcal D_\cons(X,\mathcal O_L)$ is universally locally acyclic. In general, Proposition~\ref{prop:ULAintegralstructure} below ensures that this happens arc-locally on $S$. Part (ii) then follows in general by arc-descent. More precisely, choose an arc-cover $S_0 \to S$ over which $A$ admits a ULA integral structure. Let $S_\bullet \to S$ be the Cech nerve, and let $f_n: X_n \to S_n$ be the evident base change. Then $\mathbb D_{X_0/S_0}(A|X_0)$ is constructible and commutes with any base change on $S_0$, so $(\mathbb D_{X_n/S_n}(A|X_n) )_n$ defines an object of $D_\cons^{\mathrm{cart}}(X_\bullet,L) \simeq D_\cons(X,L)$ which computes $\mathbb D_{X/S}(A)$. But then all $\mathbb D_{X_n/S_n}(A|X_n)$ are constructible and commute with any additional base change $S' \to S$, so $\mathbb D_{X/S}(A)$ is constructible and commutes with any additional base change on $S$. Biduality is then immediate, since formation of the biduality map commutes with pullback to $X_0$, where we know the result.

Part (v) is slightly trickier, as the statement in itself is not amenable to arc-descent. Note first that in part (v) it is enough to prove the first isomorphism; the composite isomorphism is just its variant after base change to the closure of $\overline{t}$ in $S_{\overline{s}}$. We replace $S_{\overline{t}}\to S_{\overline{s}}$ by any pro-\'etale map $g: T\to S$. We can then ask whether the map
\[
A\widehat{\dotimes} Rg_\ast \mathbb Z_\ell\to R\tilde{g}_\ast A|_{X\times_S T}
\]
is an isomorphism, where $\tilde{g}: X\times_S T\to X$ is the base change of $g$, and $\widehat{\dotimes}$ denotes the $\ell$-adically completed tensor product, using any integral structure on $A$ (which exists by Corollary~\ref{cor:integralstructureexists}) -- the resulting $\ell$-adically completed tensor product is independent of the choice of integral structure. This statement holds true when $A$ admits a universally locally acyclic integral structure (by the proof of (v)), and hence holds true over a proper cover $h: S'\to S$ by Proposition~\ref{prop:ULAintegralstructure}, or in fact for all the terms in the induced Cech nerve $S'^{\times_S n}$. Applying proper pushforward along $S'^{\times_S n}\to S$ and totalizing, we get the desired isomorphism.
\end{proof}

We used the following result on finite cohomological dimension due to Gabber \cite{GabberOberwolfach2020}.

\begin{lemma}\label{lem:finitecohomdim} Let $S$ be an affine scheme over $\mathbb Z[\tfrac 1\ell]$ all of whose connected components are spectra of absolutely integrally closed valuation rings, and let $f: X\to S$ be an affine scheme of finite type. Let $d$ be the maximal fibre dimension of $f$. Then the $\ell$-cohomological dimension of $X$ is bounded by $d+1$.
\end{lemma}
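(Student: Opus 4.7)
My plan is to first reduce the problem to the case of a single absolutely integrally closed (AIC) valuation-ring base via a Leray spectral sequence argument, and then invoke the affine Lefschetz-type cohomological dimension bound for schemes over a valuation ring which is the cited theorem of Gabber.

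For the reduction, I would first observe that $S$ itself has $\ell$-cohomological dimension zero. Any localization of an AIC valuation ring $V$ at a prime is again an AIC valuation ring: it is a valuation ring with the same (algebraically closed) fraction field $K$, and it is automatically strictly Henselian, since every monic polynomial over it factors in $K$ into linear factors all lying in the valuation ring by integral closedness. Hence every strict localization $S_{(\bar s)}$ of $S$ at a geometric point $\bar s$ is again the spectrum of an AIC valuation ring. In particular $S$ is ``$w$-strictly local'' in the sense of Bhatt--Scholze, and every $\ell$-torsion \'etale sheaf on $S$ has vanishing higher cohomology (once one knows that every \'etale cover of such an $S$ splits, this is standard). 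With this vanishing, the Leray spectral sequence for $f : X \to S$ collapses to give $H^i(X,F) \cong H^0(S, R^i f_* F)$, and the stalk of $R^i f_* F$ at $\bar s \to S$ is computed by the \'etale cohomology of $X \times_S S_{(\bar s)}$, which is affine of finite type over an AIC valuation ring with fibres of dimension $\leq d$. So the lemma reduces to proving the bound in the special case $S = \Spec V$ for a single AIC valuation ring $V$ with $1/\ell \in V$.

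This special case is exactly the content of the cited theorem of Gabber, and is where the main difficulty lies. The ``$+1$'' in the bound reflects the nontrivial contribution of the specialization between the generic and special points of $\Spec V$, on top of the Artin vanishing bound of $d$ that holds on each individual fibre (an affine variety over an algebraically closed field). A natural strategy for the proof --- and the main obstacle in writing it out from scratch --- is to apply Noether normalization over $V$ to produce a finite surjection $X \to \mathbb A^d_V$ (reducing to affine space by exactness of finite pushforward on torsion sheaves), and then to induct on $d$ using the projection $\mathbb A^d_V \to \mathbb A^{d-1}_V$, controlling cohomology along the $\mathbb A^1$-fibres together with the behavior at the rank-one localizations of $V$.
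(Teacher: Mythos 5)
Your reduction to the case of a single absolutely integrally closed valuation ring is essentially fine (the paper does the same reduction more cheaply, just using that $\pi_0 S$ is profinite; your observation that localizations of such a $V$ are again absolutely integrally closed valuation rings, hence strictly henselian, is correct). But this is the easy part of the lemma, and for the essential case $S=\Spec V$ your proposal has a genuine gap: you do not prove it, you cite it. Appealing to the result announced by Gabber in the Oberwolfach report is exactly what the paper is careful \emph{not} to do --- that announcement (the stronger bound $d$, via a relative Artin vanishing) has no available proof, and the version of relative Artin vanishing proved later in this paper (Proposition~\ref{prop:artinvanishing}) cannot be invoked either, since its proof ultimately relies on this very lemma (through Proposition~\ref{prop:basicpropertiesULA}~(iii) and the nearby-cycles theorem). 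So your argument is either circular or rests on an unavailable reference, and the ``natural strategy'' you sketch (Noether normalization to $\mathbb A^d_V$ and induction on $d$ along $\mathbb A^1$-fibrations) is explicitly not carried out; note that even the curve case over $V$ is nontrivial (in the paper's later, stronger statement it requires a Runge-type surjectivity result, Lemma~\ref{lem:runge}).

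For comparison, the paper's proof of the $d+1$ bound is short and self-contained: reduce to $V$ of finite rank and to sheaves concentrated in a single fibre, base change to the closure of that fibre, and use arc-excision to reduce to $V$ of rank at most $1$; then, with $\tilde X$ the henselization of $X$ along the special fibre, use the gluing triangle
\[
R\Gamma(X,\mathcal F)\to R\Gamma(X_\eta,\mathcal F)\oplus R\Gamma(\tilde{X},\mathcal F)\to R\Gamma(\tilde{X}_\eta,\mathcal F),
\]
kill $R\Gamma(\tilde X,\mathcal F)$ by Gabber's \emph{affine analogue of proper base change} (the published 1994 theorem, a different input from the Oberwolfach announcement), and apply Artin vanishing in degrees $\leq d$ to the two generic-fibre terms; the fibre of a map of complexes in degrees $\leq d$ lies in degrees $\leq d+1$, which is exactly where the $+1$ comes from. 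If you want to salvage your write-up, this valuation-ring case is the content you must supply.
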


In fact, Gabber showed that one can bound the $\ell$-cohomological dimension by $d$, by proving an even more general relative version of Artin vanishing. We will recall his result in Proposition~\ref{prop:artinvanishing} below.

\begin{proof} As $\pi_0 S$ is profinite, it suffices to check this on connected components. We can also reduce to the case that $S=\Spec V$ where $V$ is of finite rank, and to sheaves $\mathcal F$ concentrated in one fibre. Base changing to the closure of this fibre, we can assume that this is the generic fibre of $S$. Let $S'\subset S$ be the open subset consisting of the generic point $\eta$ and its immediate specialization (if it exists). By arc-excision applied to the cover of $S$ by $S'$ and $S\setminus \{\eta\}$, we find that $R\Gamma(X,\mathcal F)=R\Gamma(X\times_S S',\mathcal F)$; so we can assume that $S$ is of rank (at most) $1$. The case of fields is given by Artin vanishing. Now let $\tilde{X}$ be the henselization of $X$ at the special fibre. Then there is a triangle
\[
R\Gamma(X,\mathcal F)\to R\Gamma(X_\eta,\mathcal F)\oplus R\Gamma(\tilde{X},\mathcal F)\to R\Gamma(\tilde{X}_\eta,\mathcal F),
\]
and by Gabber's affine analogue of proper base change \cite{GabberAffine}, $R\Gamma(\tilde{X},\mathcal F)=0$ (as we assumed that $\mathcal F$ is concentrated on the generic fibre). But by Artin vanishing, both $R\Gamma(X_\eta,\mathcal F)$ and $R\Gamma(\tilde{X}_\eta,\mathcal F)$ sit in degrees $\leq d$, giving the claim.
\end{proof}

\begin{remark}\label{rem:rungevanishing} The proof shows that the vanishing in cohomological degree $d+1$ has the following reinterpretation in terms of rigid-analytic geometry. Let $V$ be a complete rank $1$ valuation ring with algebraically closed fraction field $K$, and let $X$ be an affine scheme of finite type over $V$, of relative dimension $d$. Let $\hat{X}/\mathrm{Spf}\, V$ be its completion, and let $\hat{X}_K$ be its generic fibre as a rigid-analytic variety; this is an open affinoid subset of the rigid-analytic variety associated to $X_K$. Finally, let $\mathcal F$ be any constructible sheaf, of torsion order invertible in $V$. Then the map
\[
H^d(X_K,\mathcal F)\to H^d(\hat{X}_K,\mathcal F)
\]
is surjective. This is a rigid-analytic analogue (with constructible coefficients) of a known property of Runge pairs in complex-analytic geometry, cf.~e.g.~\cite{AndreottiNarasimhan}. (We thank Mohan Ramachandran for making us aware of this reference.)
\end{remark}

Next, we analyze arc-descent properties.

\begin{proposition}\label{prop:ULAfinitaryarc} Let $f: X\to S$ be a separated map of finite presentation. Consider the functor taking any $S'$ over $S$ to the $\infty$-category $\mathcal D^{\mathrm{ULA}}(X'/S')\subset \mathcal D(X')$ of universally locally acyclic sheaves on $X'=X\times_S S'$ over $S'$. This defines an arc-sheaf of $\infty$-categories, which is finitary in settings (A) and (B).

In particular, if $A\in D(X)$ and $S'\to S$ is an arc-cover such that $A|_{X'}$ is universally locally acyclic over $S'$, then $A$ is universally locally acyclic over $S$.
\end{proposition}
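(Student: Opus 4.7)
The plan is to separate the arc-sheaf assertion from the finitariness assertion and reduce each to the corresponding property of $\mathcal D_\cons$ from Theorem~\ref{thm:arcdescentD}, exploiting the characterization of universal local acyclicity as dualizability in $\mathcal C'_S$ (Definition~\ref{def:ULA}). Since dualizability is a property preserved by symmetric monoidal functors and detected conservatively by jointly conservative families of such functors, once $\mathcal D_\cons$ is known to be an arc-sheaf, the subcategory of dualizable objects inherits this descent.

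For arc-descent, full faithfulness is immediate: every ULA object is constructible (Proposition~\ref{prop:basicpropertiesULA}(iii) in setting (A), and by definition in (B), (C)), and Proposition~\ref{prop:basicpropertiesULA}(i) ensures $S' \mapsto \mathcal D^{\mathrm{ULA}}(X_{S'}/S')$ is a sub-presheaf of the arc-sheaf $S' \mapsto \mathcal D_\cons(X_{S'})$. For essential surjectivity, given an arc-cover $S' \to S$ with Cech nerve $S_\bullet$ and a compatible family $(A_n) \in \varprojlim_\Delta \mathcal D^{\mathrm{ULA}}(X_n/S_n)$ (where $X_n = X \times_S S_n$), the descended $A \in \mathcal D_\cons(X)$ exists by Theorem~\ref{thm:arcdescentD}, and I would verify it is ULA by checking the isomorphism of Proposition~\ref{prop:ULAfirstchar} in $\mathcal D_\cons(X \times_S X)$. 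Since this map pulls back to the corresponding isomorphism for $A_0$ on $X' \times_{S'} X'$ (using Proposition~\ref{prop:basicpropertiesULA}(ii) to identify the relative Verdier duals under base change), and $X' \times_{S'} X' \to X \times_S X$ is an arc-cover, conservativity of arc-pullback on $\mathcal D_\cons$ forces the map itself to be an isomorphism. Setting (B) is identical; setting (C) with integral coefficients reduces to (B) via the identification $\mathcal D_\cons(X,\mathcal O_L) \simeq \varprojlim_n \mathcal D_\cons(X,\mathcal O_L/\ell^n)$ already invoked earlier in the paper.

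For finitariness in settings (A) and (B), I would use that $\mathcal D_\cons$ is a finitary arc-sheaf in setting (B) by Theorem~\ref{thm:arcdescentD}; setting (A) is absorbed because ULA objects there are perfect-constructible. Given $S = \varprojlim_i S_i$ and a ULA sheaf $A$ on $X = X_0 \times_{S_0} S$, finitariness produces some $A_i$ on $X_i = X_0 \times_{S_0} S_i$ inducing $A$; the assertion that $A_i$ is already ULA over $S_i$ amounts to a morphism between constructible sheaves on $X_i \times_{S_i} X_i$ being an isomorphism, and since this is known after pullback to $X \times_S X$, a second application of finitariness on the product system $X_i \times_{S_i} X_i$ yields an index $j \geq i$ over which the isomorphism already holds.

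The main obstacle will be setting (C) with rational coefficients, since Proposition~\ref{prop:ULAfirstchar} is stated only in setting (A) and global integral lattices on $S$ need not exist. The remedy is to argue arc-locally: by Proposition~\ref{prop:ULAintegralstructure} a ULA $L$-sheaf admits a ULA integral $\mathcal O_L$-structure after pullback along some arc-cover of $S$, reducing the descent problem to the integral case handled above. Since the statement being proved is itself an arc-descent statement, this inner arc-localization introduces no circularity.
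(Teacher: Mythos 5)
Your skeleton (full faithfulness is free because $\mathcal D^{\mathrm{ULA}}\subset\mathcal D_\cons$ and the latter is an arc-sheaf by Theorem~\ref{thm:arcdescentD}; what remains is effectivity and finitariness; settings (A) and (B) give the same ULA objects) matches the paper. But the central verification step in both your effectivity and your finitariness arguments has a genuine gap: you check the comparison map of Proposition~\ref{prop:ULAfirstchar} for the descended (resp.\ approximated) object $A$ by pulling back along the arc-cover $X'\times_{S'}X'\to X\times_S X$ and claiming the pullback ``is'' the corresponding map for $A|_{X'}$. That identification is exactly a base-change compatibility for $\mathbb D_{X/S}(A)$ and for $R\sHom_\Lambda(\pi_1^\ast A,R\pi_2^! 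A)$, and neither of these commutes with base change for a general constructible $A$ -- such compatibility is essentially equivalent to the ULA property you are trying to establish (Proposition~\ref{prop:basicpropertiesULA}(ii) only applies once $A$ is known ULA \emph{over $S$}, not merely over the cover), so the argument is circular. Moreover the target $R\sHom_\Lambda(\pi_1^\ast A,R\pi_2^! A)$ is not constructible in general, so neither conservativity of arc-pullback on $\mathcal D_\cons$ nor the finitariness of $\mathcal D_\cons$ can be applied to that map; your spreading-out step ``a morphism between constructible sheaves on $X_i\times_{S_i}X_i$'' is therefore not available. The paper avoids all of this: for effectivity it descends the \emph{duality data} rather than testing an isomorphism -- over the Cech nerve the relative Verdier duals of the ULA objects are constructible and commute with base change, hence form a Cartesian system gluing to a candidate dual $A^\vee$ over $X$, and the unit and counit (and the triangle identities) are then produced by descent of morphisms; for finitariness it observes that $\mathcal C'_S$ takes cofiltered limits of affine schemes to filtered colimits of symmetric monoidal $2$-categories, so dualizability is detected at a finite stage, with no appeal to the shriek-Hom criterion at all.

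A second problem is your treatment of setting (C) with rational coefficients: invoking Proposition~\ref{prop:ULAintegralstructure} is circular, since its proof explicitly uses Proposition~\ref{prop:ULAfinitaryarc} (both for the arc-sheaf property and, via Proposition~\ref{prop:constructiblelattices}, for finitariness). The paper instead handles (C) directly, defining $\mathbb D_{X/S}$ there via limits from setting (B) (as fixed before Proposition~\ref{prop:basicpropertiesULA}) and running the same descent-of-duality-data argument, or alternatively adapting the criterion of Proposition~\ref{prop:ULAfirstchar} in the $\ell$-adically completed (resp.\ isogeny) categories of Remark~\ref{rem:largecategoryC}. To repair your write-up you would need to replace the ``check the isomorphism after pullback'' step by the descent of the dual and of the unit/counit maps, and drop the appeal to Proposition~\ref{prop:ULAintegralstructure}.
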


\begin{proof} As settings (A) and (B) give rise to the same notion of universally locally acyclic sheaves, we can assume that we are in setting (B) or (C). Then $\mathcal D^{\mathrm{ULA}}(X'/S')\subset \mathcal D(X')=\mathcal D_\cons(X',\Lambda)$, and we know that the latter is an arc-sheaf by Theorem~\ref{thm:arcdescentD}. Thus, we only need to prove effectivity of descent, which is exactly the final sentence, and that it is a finitary arc-sheaf in setting (B). Finitaryness in setting (B) follows from $\mathcal C'_S$ taking cofiltered limits of affine schemes $S$ to filtered colimits of symmetric monoidal $2$-categories (and hence the same happens on dualizable objects).

For the final sentence, note that the question whether the Verdier dual (formed as a pro-\'etale sheaf, as in Proposition~\ref{prop:basicpropertiesULA}~(ii)) is again in $\mathcal D_\cons(X,\Lambda)$ and commutes with base change in $S$ can be checked arc-locally on $S$. Thus, we have a well-defined dual $A^\vee = \mathbb D_{X/S}(A)$ of $A$. Similarly, one can produce the unit and counit maps via arc-descent. Alternatively, use the characterization of Proposition~\ref{prop:ULAfirstchar} in setting (A), which can be adapted to setting (C) by working with $\ell$-adically completed derived categories (resp.~the isogeny category).
\end{proof}

\begin{proposition}\label{prop:ULAintegralstructure} Let $f: X\to S$ be a separated map of finite presentation, and consider setting (C). Let $A\in \mathcal D^{\mathrm{ULA}}(X/S,L)$, and consider the functor taking $S'/S$ to the $\infty$-category of $A_0\in \mathcal D^{\mathrm{ULA}}(X'/S',\mathcal O_L)$ with an isomorphism $A_0[\tfrac 1\ell]\cong A|_{X'}$, where $X'=X\times_S S'$. This defines a finitary arc-sheaf of $\infty$-categories that admits a section over a finitely presented proper surjection $S'\to S$.
\end{proposition}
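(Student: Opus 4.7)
The plan is to establish three properties of the functor $F(S')=\{A_0\in\mathcal D^{\mathrm{ULA}}(X'/S',\mathcal O_L):A_0[\tfrac{1}{\ell}]\cong A|_{X'}\}$: the arc-sheaf property, finitaryness, and existence of a section over some finitely presented proper surjection.

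For the arc-sheaf property, I would argue that $F$ sits as a full subfunctor of $G(S')=\{A_0\in\mathcal D_\cons(X',\mathcal O_L):A_0[\tfrac{1}{\ell}]\cong A|_{X'}\}$, which is already a finitary arc-sheaf by Proposition~\ref{prop:constructiblelattices}, and the subcategory $F\subset G$ is cut out by the condition that $A_0$ be ULA over $S'$. Since ULA is an arc-local condition on the base (Proposition~\ref{prop:ULAfinitaryarc}), this subcategory is preserved under \v{C}ech-nerve limits of arc-covers, so $F$ inherits the arc-sheaf property from $G$.

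For finitaryness, write $S'=\lim_i S_i'$ as a cofiltered limit of affine $S$-schemes, and suppose given $A_0\in F(S')$. Proposition~\ref{prop:constructiblelattices} descends $A_0$ as a constructible $\mathcal O_L$-lattice to some $A_0^{(i_0)}\in G(S_{i_0}')$. To propagate the ULA condition, I would use that for a constructible $\mathcal O_L$-complex whose generic fibre $A_0[\tfrac{1}{\ell}]=A$ is a priori known to be ULA, the ULA condition on $A_0$ is equivalent to the ULA condition on the reduction $A_0\dotimes_{\mathcal O_L}\mathcal O_L/\ell$—by $\ell$-adic derived completeness, the fibrewise comparison maps characterizing ULA propagate from $\mathcal O_L/\ell$ to each $\mathcal O_L/\ell^n$ and then to $\mathcal O_L$ in the limit. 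Since ULA modulo $\ell$ is a statement in setting (B), where ULA is finitary by Proposition~\ref{prop:ULAfinitaryarc}, the ULA property of $A_0^{(i_0)}\dotimes_{\mathcal O_L}\mathcal O_L/\ell$ descends to some $S_{i_1}'$ with $i_1\geq i_0$, completing the argument.

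For the main claim—existence of a section over a finitely presented proper surjection—I would use the valuative criterion of properness for $\mathcal D^{\mathrm{ULA}}$ stated in the introduction and proven in the paper using nearby cycles: for any valuation ring $V$ with algebraically closed fraction field $K$, the restriction $\mathcal D^{\mathrm{ULA}}(X_V/V,\Lambda)\to\mathcal D_\cons(X_K,\Lambda)$ is an equivalence for each of $\Lambda\in\{L,\mathcal O_L\}$. Combined with Corollary~\ref{cor:integralstructureexists}, which produces an $\mathcal O_L$-lattice on $A|_{X_K}$, inverting this equivalence yields an element of $F(\Spec V)$. Hence $F$ is v-locally non-empty, and by the finitary arc-sheaf property established above it then admits a section over some finitely presented v-cover $T\to S$. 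The final step is to refine $T\to S$ to a finitely presented proper surjection; I would invoke the structure theorem of Rydh/Voevodsky that any finitely presented v-cover is refined by a composition of a finitely presented \'etale cover followed by a finitely presented proper surjection, and then absorb the \'etale part by passing, on a finite constructible stratification of $S$, to a finite \'etale refinement (itself proper). The main obstacle I anticipate is precisely this last topological manipulation—converting a finitely presented v-cover section into a proper-surjection section—since the rest of the argument is rather formal once the valuation-ring input is in hand.
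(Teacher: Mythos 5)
Your treatment of the arc-sheaf property and of finitaryness matches the paper's argument (ULA can be tested modulo $\ell$, reducing to setting (B), plus Proposition~\ref{prop:ULAfinitaryarc} and Proposition~\ref{prop:constructiblelattices}), and producing objects of the functor over valuation rings via Theorem~\ref{thm:nearbycycles} and Corollary~\ref{cor:integralstructureexists} is also how the paper starts. The gap is in your final step: you only use the valuative criterion to get \emph{nonemptiness} over valuation rings, and then try to convert ``section over a finitely presented v-cover'' into ``section over a finitely presented proper surjection'' by refining the cover. That refinement does not exist in general: the structure theory of h-covers (Rydh/Voevodsky) produces refinements involving Zariski/quasi-compact open coverings (or \'etale covers over a constructible stratification), and these pieces cannot be absorbed into a proper map. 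Concretely, for $S=\mathbb A^1$ and the finitely presented v-cover $U\sqcup\{0\}\to S$ with $U=S\setminus\{0\}$, no proper surjection $T\to S$ factors through it: the clopen piece of $T$ landing in $U$ would have closed image containing the dense open $U$, hence image all of $S$, contradicting that it lies in $U$. The same example, viewed as a representable finitary v-sheaf, has a section over a finitely presented v-cover but over no proper surjection, so your implication fails at the level of generality in which you invoke it.

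What is missing is the full strength of the valuative criterion of properness for the functor itself, i.e.\ that the restriction map from its value on $\Spec V$ to its value on $\Spec K$ is an \emph{equivalence} (Theorem~\ref{thm:nearbycycles}), not merely that the value on $\Spec V$ is nonempty. This is exactly what the paper's Lemma~\ref{lem:propersections} exploits: working over an irreducible $S$ with a fixed geometric generic point, one uses quasicompactness of the Zariski--Riemann space $\varprojlim_{S'}|S'|$ over the cofiltered system of proper finitely presented modifications $S'\to S$, together with the valuative criterion and finitaryness, to find one such $S'$ covered by finitely many opens over which sections exist, and then glues these local sections after a further proper cover, the gluing being controlled by an induction on the truncatedness of the isomorphism sheaves. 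Your write-up never performs (or replaces) this gluing step, and without the isomorphism statement $F(\Spec V)\xrightarrow{\sim}F(\Spec K)$ the desired proper surjection simply cannot be produced from v-local nonemptiness alone.
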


\begin{proof} By Proposition~\ref{prop:ULAfinitaryarc} and Theorem~\ref{thm:arcdescentD}, it is an arc-sheaf of $\infty$-categories. Moreover, note that $A_0\in \mathcal D_\cons(X'/S',\mathcal O_L)$ is universally locally acyclic if and only if $A_0/\ell\in \mathcal D_\cons(X'/S',\mathcal O_L/\ell)$ is; indeed, by approximation we may assume that $L$ is a finite extension of $\mathbb Q_\ell$, and then the condition lifts to $\mathcal O_L/\ell^n$, and then to $\mathcal O_L$ by passing to the limit. Then it follows from Proposition~\ref{prop:ULAfinitaryarc} and Proposition~\ref{prop:constructiblelattices} that it is a finitary arc-sheaf. Moreover, Theorem~\ref{thm:nearbycycles} implies that it satisfies the valuative criterion of properness, i.e.~for any absolutely integrally closed valuation ring $V$ over $S$ with fraction field $K$, the value at $V$ maps isomorphically to the value at $K$; moreover, that theorem shows that the value at $K$ is nonempty. Now the result follows from Lemma~\ref{lem:propersections}.
\end{proof}

\begin{lemma}\label{lem:propersections} Let $S$ be a qcqs scheme and let $\mathcal F$ be a finitary Zariski sheaf of anima satisfying the valuative criterion of properness. Assume that for any $S'\to S$ and any two sections $a,b\in\mathcal F(S')$, the sheaf of isomorphisms between $a$ and $b$ is $n$-truncated for some $n$, and that for any algebraically closed field $K$ over $S$, $\mathcal F(\mathrm{Spec} K)$ is nonempty. Then there is a finitely presented proper surjection $S'\to S$ with $\mathcal F(S')$ nonempty.
\end{lemma}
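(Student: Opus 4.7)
My plan is to argue by Noetherian induction on $S$. By finitaryness of $\mathcal F$, first reduce to the case where $S$ is of finite type over $\mathbb{Z}$ (and in particular Noetherian): any finitely presented proper surjection with a section of $\mathcal F$ over an approximation $S_\alpha$ pulls back to one over $S$. Then assume the lemma for every proper closed subscheme $Z \subsetneq S$.

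For each generic point $\eta$ of $S$, choose an absolutely integrally closed valuation ring $V_\eta$ with fraction field an algebraic closure of $\kappa(\eta)$ together with a morphism $\mathrm{Spec}\, V_\eta \to S$ sending the generic point of $V_\eta$ to $\eta$, obtained by dominating the local ring at a closed specialization of $\eta$ inside $\overline{\kappa(\eta)}$. By the valuative criterion of properness and nonemptiness of $\mathcal F$ on algebraically closed fields, $\mathcal F(V_\eta)$ is nonempty, and by finitaryness this descends to a section on a finite-type $S$-subalgebra $A_\eta \subseteq V_\eta$. Since $\mathrm{Frac}(A_\eta) \subseteq \overline{\kappa(\eta)}$ is algebraic over $\kappa(\eta)$, the morphism $\mathrm{Spec}\, A_\eta \to \overline{\{\eta\}}$ is generically finite; pick a dense open $S_{0,\eta} \subseteq \overline{\{\eta\}}$ over which it is finite (hence proper). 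Nagata-compactify $\mathrm{Spec}\, A_\eta$ over $\overline{\{\eta\}}$ to obtain a finitely presented proper scheme $Y_\eta \to \overline{\{\eta\}}$ containing $\mathrm{Spec}\, A_\eta$ as a dense open; then the open subscheme $\mathrm{Spec}\, A_\eta \times_{\overline{\{\eta\}}} S_{0,\eta} \subseteq Y_\eta \times_{\overline{\{\eta\}}} S_{0,\eta}$ is both dense (density is preserved by open base change) and closed (being proper over $S_{0,\eta}$), so the two coincide. Setting $Y := \bigsqcup_\eta Y_\eta \to S$ (via $\overline{\{\eta\}} \hookrightarrow S$) and $S_0 := \bigcup_\eta S_{0,\eta} \subseteq S$ (a dense open), we obtain a finitely presented proper morphism $Y \to S$ whose restriction to $S_0$ is proper surjective and carries a section of $\mathcal F$ defined on all of $Y \times_S S_0$.

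Let $W := S \setminus S_0$, a proper closed subscheme of $S$, and apply the induction hypothesis to $W$ to obtain a finitely presented proper surjection $T \to W$ with $\mathcal F(T) \neq \emptyset$. Let $Y^\ast \subseteq Y$ denote the scheme-theoretic closure of $Y \times_S S_0$; then $Y^\ast \to S$ is a finitely presented proper surjection (its image is closed in $S$ and contains the dense $S_0$), and its boundary $Y^\ast \setminus (Y \times_S S_0)$ lies over $W$. Setting $S' := Y^\ast \sqcup T$, the composite $S' \to S$ is a finitely presented proper surjection. The main obstacle is then to verify $\mathcal F(Y^\ast) \neq \emptyset$ by extending the section on the dense open $Y \times_S S_0 \subseteq Y^\ast$ to all of $Y^\ast$: I plan to handle this extension by pulling back both our section on $Y \times_S S_0$ and our section on $T$ to the finitely presented proper $Y^\ast$-scheme $Y^\ast \times_S T$, where they give compatible data over the open/closed decomposition of $Y^\ast$ induced by $S = S_0 \sqcup W$; the Zariski sheaf property of $\mathcal F$ together with the $n$-truncatedness of its isomorphism sheaves (which bounds the gluing obstructions in a finite Postnikov tower) should allow descent of this data, and the remaining obstructions are then trivialized by iterated applications of the induction hypothesis to progressively smaller boundary strata of $Y^\ast$.
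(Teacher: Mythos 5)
The first two-thirds of your construction is essentially fine but is also the easy part: spreading a section of $\mathcal F(\overline{\kappa(\eta)})$ out (by finitaryness) to a scheme generically finite over $\overline{\{\eta\}}$, compactifying, and invoking Noetherian induction on the closed complement $W=S\setminus S_0$ only uses finitaryness and spreading out. (Two smaller points: you must shrink each $S_{0,\eta}$ to avoid the other irreducible components, since a dense open of a component is otherwise only locally closed in $S$, so that $S_0$ is genuinely open; and the initial reduction to $S$ of finite type over $\mathbb Z$ is not as stated, because $\mathcal F$ is a functor on $S$-schemes and its pushforward to an approximation $S_\alpha$ need not satisfy the valuative criterion -- one has to set up the approximation differently, as in the paper's reduction to irreducible $S$.)

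The genuine gap is the last step, which is where the whole content of the lemma lies. The decomposition of $Y^\ast$ into $Y^\ast\times_S S_0$ and $Y^\ast\times_S W$ is an open/closed decomposition, not a Zariski cover, so the Zariski sheaf property gives no gluing of a section on the open part with a section on the closed part; worse, the two sections are completely unrelated -- the pullback of your $S_0$-section to $Y^\ast\times_S T$ is empty, so there is no compatibility datum for any Postnikov/Isom-sheaf argument to act on -- and the Noetherian induction hypothesis only produces \emph{some} section over \emph{some} proper cover of a boundary stratum, with no mechanism forcing it to match or extend the given one. Extending the generic section across the boundary after a further modification is exactly where the valuative criterion has to do real work (in your outline it is invoked only once, to produce the section over $V_\eta$, where it is not even needed). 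The paper's proof supplies this mechanism: reduce to $S$ irreducible with generic point $\Spec K$, fix $s\in\mathcal F(\overline K)$, and consider the cofiltered system of finitely presented proper $S$-schemes with a lift of the $\overline K$-point, whose limit is the Zariski--Riemann space. At each of its points the valuative criterion extends $s$ over the corresponding valuation ring, finitaryness spreads this to an open subset of some modification $S'$, quasicompactness of the Zariski--Riemann space gives one $S'$ covered by finitely many such opens, and these local extensions -- all restricting to $s$ over $\overline K$ -- are glued after passing to further proper covers, by induction on the truncation degree of the Isom-sheaves (the claim that $\mathrm{colim}_{S''}\mathrm{Isom}(a|_{S''},b|_{S''})\to\mathrm{Isom}(a|_{\overline K},b|_{\overline K})$ is bijective). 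Without this, or some other genuine idea for crossing from $S_0$ into $W$, your outline does not prove the lemma.
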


\begin{proof} By an approximation argument (using that $\mathcal F$ is finitary), we can reduce to the case that $S$ is irreducible; let $K$ be its fraction field, and fix an algebraic closure $\overline{K}$ of $K$. Consider the cofiltered category of finitely presented proper $S$-schemes $S'$ with a fixed $\overline{K}$-point; we claim that the restriction map
\[
\mathrm{colim}_{S'} \mathcal F(S')\to \mathcal F(\overline{K})
\]
is an isomorphism, which implies the lemma.

First, we prove this claim for $n$-truncated $\mathcal F$ by induction on $n$. For $n=-2$, there is nothing to prove as then $\mathcal F=\ast$. Now we prove that the map is an injection, i.e.~whenever $a,b\in\mathcal F(S')$ for some proper $S'$ over $S$, then
\[
\mathrm{colim}_{S''} \mathrm{Isom}_{\mathcal F(S'')}(a|_{S''},b|_{S''})\to \mathrm{Isom}_{\mathcal F(\overline{K})}(a|_{\overline{K}},b|_{\overline{K}})
\]
is an isomorphism. Replacing $S$ by $S'$ and $\mathcal F$ by the sheaf of isomorphisms between $a$ and $b$ gives an $n-1$-truncated sheaf, reducing to the induction hypotheses.

Now take any section of $s\in \mathcal F(\overline{K})$. By quasicompactness of the Zariski--Riemann space $\mathrm{lim}_{S'} |S'|$ and the valuative criterion of properness (and finitaryness), there is some proper $S'$ that admits a cover by finitely many open subsets $U_i\subset S'$ for which $s$ lies in the image of $\mathcal F(U_i)\to \mathcal F(\overline{K})$. We can replace $S$ by $S'$, and so assume that $s$ is locally in the image of $\mathcal F(S)\to \mathcal F(\overline{K})$. It remains to glue the local sections, but this is possible (after passing to some proper cover) by the claim on isomorphisms.

This proves the result for $n$-truncated $\mathcal F$. But now the argument showing that the map is an injection applies in general, as does the last paragraph.
\end{proof}

In fact, one can check universal local acyclicity after pullback to absolutely integrally closed, rank $1$ valuation rings.

\begin{corollary}\label{cor:ULAtestrank1} Let $f: X\to S$ be a separated map of finite presentation and $A\in D(X)$ in setting (B) or (C). Then $A$ is $f$-universally locally acyclic if and only if for all rank $1$ valuation rings $V$ with algebraically closed fraction field $K$ and all maps $\Spec V\to S$, the restriction $A|_{X_V}\in D(X_V)$ to $X_V=X\times_S \Spec V$ is universally locally acyclic over $V$. 
\end{corollary}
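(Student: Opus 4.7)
The forward implication is immediate from Proposition~\ref{prop:basicpropertiesULA}(i). For the converse, my plan is to apply the arc-descent of ULA (Proposition~\ref{prop:ULAfinitaryarc}) along a suitable arc-cover $S'\to S$ whose connected components are all spectra of rank~$1$ valuation rings with algebraically closed fraction field. Once such a cover is in hand, the hypothesis gives that $A|_{X_{S'}}$ is ULA over $S'$, and Proposition~\ref{prop:ULAfinitaryarc} then concludes that $A$ is ULA over $S$.

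The key observation enabling the existence of such a cover is that the collection of rank~$1$ valuation rings $V$ with algebraically closed fraction field equipped with maps $\Spec V\to S$ is arc-sufficient. Indeed, for any map $\Spec W\to S$ with $W$ of rank $\leq 1$, one extends $W$ to such a $V$ by choosing an algebraic closure $\overline{K}$ of $\Frac(W)$ and extending the valuation of $W$ to a rank~$1$ valuation on $\overline{K}$; when $W$ is a field one instead just picks any rank~$1$ valuation ring with algebraically closed fraction field having $W$ in its residue field. The inclusion $W\hookrightarrow V$ is then faithfully flat, witnessing the arc-lifting property. Note also that being ULA over a disjoint union of bases is equivalent to being ULA over each component, so pullback to any such $S'$ preserves the hypothesis.

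The principal obstacle is the set-theoretic/qcqs issue: the naive disjoint union of all relevant $\Spec V$ is not qcqs, whereas arc-covers in this paper are required to be maps of qcqs schemes. For $S$ noetherian one can produce a finite such arc-cover directly, using that there are only finitely many generic points together with Noetherian induction along specialization. For general qcqs $S$ the strategy is noetherian approximation: writing $S=\lim_i S_i$ as a cofiltered limit of finite-type $\mathbb Z[\tfrac{1}{\ell}]$-schemes, construct finite arc-covers $S_i'\to S_i$ by rank~$1$ valuation rings with algebraically closed fraction field, and pass to the limit, using the finitaryness clause of Proposition~\ref{prop:ULAfinitaryarc} in settings (A) and (B). In setting (C), where finitaryness is not automatic, one first invokes Proposition~\ref{prop:ULAintegralstructure} to pass to an integral ULA structure after a finitely presented proper surjection, reducing to the integral case where the preceding argument applies.
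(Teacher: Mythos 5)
The forward direction is fine, but the converse as written has two genuine gaps. First, the covers you want do not exist: arc-covers are surjective on underlying topological spaces (test the lifting condition at each point with its residue field, a rank $0$ valuation ring), so a finite disjoint union of spectra of rank $1$ valuation rings --- which has only finitely many points --- can never be an arc-cover of a noetherian scheme of positive dimension. Hence the claimed ``finite arc-cover by rank $1$ valuation rings'' for noetherian $S$, and with it the noetherian-approximation step, breaks down. If you instead allow a genuinely qcqs cover (say the spectrum of a product of rank $1$ valuation rings), its connected components at non-principal ultrafilters are ultraproducts, which are valuation rings of rank $>1$; and in any case the passage from ``ULA over every connected component'' to ``ULA over the cover'' is not a formal disjoint-union statement but exactly a finitaryness argument. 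The correct order of operations, which is what the paper does, is: use arc-descent (the last sentence of Proposition~\ref{prop:ULAfinitaryarc}) to pass to a v-cover all of whose connected components are spectra of absolutely integrally closed valuation rings of \emph{arbitrary} rank, use finitaryness of $S'\mapsto \mathcal D^{\mathrm{ULA}}(X_{S'}/S')$ to reduce to a single such valuation ring, of finite rank, and only then use a finite arc-cover by rank $1$ pieces --- which exists precisely because $\Spec V$ now has finitely many points.

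Second, your treatment of setting (C) with rational coefficients is circular: Proposition~\ref{prop:ULAintegralstructure} takes as hypothesis $A\in\mathcal D^{\mathrm{ULA}}(X/S,L)$, i.e.\ exactly the conclusion you are trying to prove, so you cannot invoke it to produce an integral ULA structure over a proper cover of $S$. (Setting (C) with integral coefficients does reduce formally to (B), as you say.) The paper instead uses only the \emph{finitaryness} of the arc-sheaf parametrizing ULA $\mathcal O_L$-lattices to reduce to the case that $S$ is the spectrum of an absolutely integrally closed valuation ring, and there argues directly: by Theorem~\ref{thm:nearbycycles} the restriction to the generic fibre has a canonical ULA extension $Rj_\ast j^\ast A$; replacing $A$ by the cone of $A\to Rj_\ast j^\ast A$ one may assume $A|_{X_\eta}=0$, and then constructibility of the support produces a rank $1$ valuation ring $\Spec V\to S$ whose closed point hits the support but whose generic point does not, so the hypothesis of the corollary together with Theorem~\ref{thm:nearbycycles} forces $A=0$. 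Some argument of this kind is needed; your proposal as it stands does not supply one.
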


\begin{proof} In setting (B), this is a consequence of $S'\mapsto \mathcal D^{\mathrm{ULA}}(X'/S',\Lambda)\subset \mathcal D_\cons(X',\Lambda)$ being a finitary arc-sheaf: We may first assume that all connected components of $S$ are spectra of absolutely integrally closed valuation rings, and then by finitaryness we can assume that $S$ is the spectrum of an absolutely integrally closed valuation ring, in fact one of finite rank. Then by arc-descent one can reduce to the rank $1$ case, as desired.

In setting (C) with integral coefficients, the result follows formally from setting (B). With rational coefficients, consider the finitary arc-sheaf of anima parametrizing universally locally acyclic $A_0$ with integral coefficients and $A_0[\tfrac 1\ell]\cong A$, as in Proposition~\ref{prop:ULAintegralstructure}. It suffices to see that this admits a section over an arc-cover of $S$. By finitaryness, we can reduce to the case that $S$ is the spectrum of an absolutely integrally closed valuation ring. By Theorem~\ref{thm:nearbycycles}, there is a unique universally locally acyclic extension of the restriction to the generic fibre $j: X_\eta\hookrightarrow X$. Replacing $A$ by the cone of $A\to Rj_\ast j^\ast A$, we can assume that the restriction of $A$ to $X_{\eta}$ is trivial. As $A$ is constructible, the image of its support in $S$ is constructible; we can thus find a morphism $\Spec V\to S$ from a rank $1$ valuation ring whose closed point maps into the support of $A$, but whose generic point does not, and replace $S$ by this base change. But then $A$ is universally locally acyclic by assumption, and its restriction to the generic fibre vanishes, so $A=0$ by Theorem~\ref{thm:nearbycycles}.
\end{proof}

\section{Nearby cycles}

The following theorem is essentially due to Lu--Zheng, \cite[Section 3]{LuZhengULA}.

\begin{theorem}\label{thm:nearbycycles} Let $S=\Spec V$ be an absolutely integrally closed valuation ring $V$ with fraction field $K$. Let $X$ be a separated scheme of finite presentation over $S$, with generic fibre $X_\eta$. Consider one of the settings (B) and (C).

The restriction functor
\[
D^{\mathrm{ULA}}(X/S)\to D(X_\eta)
\]
is an equivalence, whose inverse is given by $Rj_\ast: D(X_\eta)\subset D(X_{\eta,\proet},\Lambda)\to D(X_\proet,\Lambda)$ for $j: X_\eta\to X$ the inclusion.
\end{theorem}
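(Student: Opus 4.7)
The plan is to prove the theorem in setting (B) with $\Lambda$ torsion first and then bootstrap. In setting (C) with $\Lambda = \mathcal{O}_L$ for finite $L/\mathbb{Q}_\ell$, both sides are $\ell$-adic limits of their reductions modulo $\ell^n$, so the result passes from (B); inverting $\ell$ and invoking Corollary~\ref{cor:integralstructureexists} handles $\Lambda = L$, and a colimit over finite subextensions gives general algebraic $L$. So I fix setting (B) with $\Lambda$ killed by a power of $\ell$.

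Fully faithfulness of $j^{\ast}$ on $D^{\mathrm{ULA}}(X/S)$ is immediate from Proposition~\ref{prop:basicpropertiesULA}(v): for any $A \in D^{\mathrm{ULA}}(X/S)$ and any geometric point $\bar{x} \to X$ with image $\bar{s} \to S$, the geometric generic point $\bar{\eta}$ of $S$ is a generization of $\bar{s}$ (since $V$ is a valuation ring), and applying that statement with $\bar{t} = \bar{\eta}$ gives
\[
A_{\bar{x}} \xrightarrow{\sim} R\Gamma(X_{\bar{x}} \times_{S_{\bar{s}}} \bar{\eta},\, A).
\]
Because $K$ is already algebraically closed we have $\bar{\eta} = \eta$, and the right-hand side is precisely the stalk of $Rj_{\ast}(A|_{X_\eta})$ at $\bar{x}$. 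Hence the unit $A \to Rj_{\ast} j^{\ast} A$ is a stalkwise, and therefore an actual, isomorphism.

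The heart of the argument is essential surjectivity: for $B \in D(X_\eta)$ I must show that $Rj_{\ast} B$ is constructible on $X$ and universally locally acyclic over $S$ (the counit $j^{\ast} Rj_{\ast} B \to B$ is then automatic, $j$ being an immersion). My approach is to reduce to the case $V$ has rank $1$ via arc-descent: the disjoint union $\bigsqcup_i \Spec W_i \to \Spec V$, where $W_i = V_{\mathfrak{p}_i}/\mathfrak{p}_{i-1} V_{\mathfrak{p}_i}$ ranges over consecutive prime quotients, is an arc-cover by rank-$1$ absolutely integrally closed valuation rings, and both constructibility and the ULA property are arc-local on the base by Theorem~\ref{thm:arcdescentD} and Proposition~\ref{prop:ULAfinitaryarc}. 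In the rank-$1$ case, a noetherian approximation argument—spreading $V$, $X$, and $B$ to a finitely presented situation—reduces us to a strictly henselian excellent trait. There constructibility of $Rj_{\ast} B$ is Deligne's classical nearby cycles finiteness theorem, and the ULA property can be verified via Corollary~\ref{cor:ULAtestrank1} by checking the stalk condition of Proposition~\ref{prop:basicpropertiesULA}(v) after any further base change to another rank-$1$ absolutely integrally closed trait.

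The main obstacle is this last step: showing that, in the rank-$1$ case, $Rj_{\ast} B$ satisfies the ULA stalk criterion after arbitrary flat base change $V \to V'$ between absolutely integrally closed valuation rings with algebraically closed fraction fields. This is where the assumption that $\ell$ is invertible on $S$ is essential, and it is the genuinely non-formal input of the theorem. One route is a further approximation to essentially finite-type henselian traits, where classical proper and smooth base change apply directly. A more conceptual route, following Lu--Zheng~\cite{LuZhengULA}, packages everything in the $2$-category $\mathcal{C}'_S$: one exhibits the dual of $(X, Rj_{\ast} B)$ as $(X, Rj_{\ast} \mathbb{D}_{X_\eta/\eta}(B))$, with the unit and counit $2$-morphisms obtained by applying $Rj_{\ast}$ to the dualizability data of $B \in \mathcal{C}'_\eta$; dualizability then reduces directly to the known dualizability of $B$ on the generic fibre, bypassing any individual base-change check.
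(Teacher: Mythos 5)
Your fully faithfulness argument is fine and is essentially the paper's (the paper uses Proposition~\ref{prop:basicpropertiesULA}~(iv) with $Y=\Spec K$, you use (v); same content). The problem is essential surjectivity, which you correctly identify as the heart and then do not actually prove. First, the rank-$1$ reduction fails as stated: a valuation ring can have densely ordered spectrum, so there may be no pairs of consecutive primes at all, and even when there are, points not adjacent to another prime (such as the generic point in Example~\ref{ex:arcnotsubmersion}) are not covered, so $\bigsqcup_i \Spec W_i\to\Spec V$ is in general not an arc-cover (nor qcqs). Moreover, constructibility of the not-yet-constructible complex $Rj_\ast B$ cannot be tested along a general arc-cover: the constructible categories satisfy arc-descent, but the ambient category $\mathcal D_\et(X,\Lambda)$ only satisfies descent for universal submersions, and in addition the generic fibre changes after base change to a prime quotient, so the statement ``$Rj_\ast$ from the generic fibre is the inverse'' does not localize naively. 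Second, even in rank $1$, a general absolutely integrally closed rank-$1$ valuation ring is not a filtered colimit of (strict henselizations of) excellent traits -- take value group of rational rank $\geq 2$ -- and even when such an approximation exists, identifying $Rj_\ast B$ over $V$ with the pullback of the trait-level nearby cycles is precisely the constructibility-plus-base-change theorem for nearby cycles that is being proved; it does not follow from ``classical proper and smooth base change.''

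Your ``more conceptual route'' is circular: to transport the dualizability data of $B\in\mathcal C'_\eta$ through $Rj_\ast$ you need $Rj_\ast$ to be compatible with the convolution structure of the $2$-category, i.e.\ that $Rj_\ast$ takes exterior products to exterior products and commutes with the relevant $R\pi_!$'s. That is exactly the K\"unneth formula and base-change compatibility for nearby cycles, which in the paper are \emph{consequences} of this theorem (Corollary~\ref{cor:nearbycyclesnice}), not available inputs; Lu--Zheng do not argue this way either. The paper's actual proof of essential surjectivity is a geometric induction on relative dimension in the style of Deligne: fibre $X$ in curves over $\mathbb A^1_S$, use that the strict henselization of $\mathbb A^1_S$ at the generic point of the special fibre is a valuation ring whose fraction field has pro-$p$ Galois group, invoke semistable reduction for curves over valuation rings together with a trace argument and \'etale descent to get ULA-ness away from a closed subset finite over $S$, reduce to $X$ proper, and conclude by Lemma~\ref{lem:ULAlocalglobal}; none of this is replaced by your two proposed routes. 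Finally, your treatment of setting (C) with $L$-coefficients is too quick: Corollary~\ref{cor:integralstructureexists} provides integral structures of constructible complexes, not universally locally acyclic ones, so showing $A\to Rj_\ast j^\ast A$ is an isomorphism for an arbitrary ULA $A$ with $L$-coefficients requires the paper's separate support-and-duality argument (via Remark~\ref{rem:largecategoryC}).
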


Before proving the theorem, we note a couple of consequences.

\begin{corollary}\label{cor:nearbycyclesnice} In the situation of Theorem~\ref{thm:nearbycycles}, the functor
\[
Rj_\ast: D(X_\eta)\subset D(X_{\eta,\proet},\Lambda)\to D(X_\proet,\Lambda)
\]
has the following properties:
\begin{enumerate}
\item[{\rm (i)}] its image is contained in $D(X)=D_\cons(X,\Lambda)$;
\item[{\rm (ii)}] its formation commutes with any pullback along a map $S'=\Spec V'\to \Spec V$ where $V\to V'$ is a flat map of absolutely integrally closed valuation rings;
\item[{\rm (iii)}] it commutes with (relative) Verdier duality;
\item[{\rm (iv)}] it satisfies a K\"unneth formula: if $Y$ is another scheme of finite presentation over $S$, then the diagram
\[\xymatrix{
D(X_\eta)\times D(Y_\eta)\ar[r]^{\boxtimes}\ar[d]^{Rj_\ast \times Rj_\ast} & D((X\times_S Y)_\eta)\ar[d]^{Rj_\ast}\\
D(X)\times D(Y)\ar[r]^{\boxtimes} & D(X\times_S Y)
}\]
commutes.
\end{enumerate}

Passing to the closed fibre $i: X_s\to X$, the nearby cycles functor
\[
R\psi = i^\ast Rj_\ast: D(X_\eta)\to D(X_s)
\]
has the same properties (assuming that $V\to V'$ is faithfully flat in (i)).
\end{corollary}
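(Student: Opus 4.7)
The plan is to derive all four properties for $Rj_\ast$ from Theorem~\ref{thm:nearbycycles}, which identifies $Rj_\ast$ with the inverse of the restriction equivalence $j^\ast: D^{\mathrm{ULA}}(X/S) \xrightarrow{\sim} D(X_\eta)$. The key tool throughout will be uniqueness of ULA extensions: whenever I produce a universally locally acyclic object on $X/S$ whose generic fiber is a prescribed $B \in D(X_\eta)$, it must coincide with $Rj_\ast B$.

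Property (i) is immediate, since the theorem factors $Rj_\ast$ through $D^{\mathrm{ULA}}(X/S) \subset D(X) = D_\cons(X,\Lambda)$. For (ii), I would start with a flat extension $V\to V'$ of absolutely integrally closed valuation rings and let $S'=\Spec V'$, $X'=X\times_S S'$. Pulling back the ULA sheaf $Rj_\ast A$ to $X'$ yields a ULA sheaf over $S'$ by Proposition~\ref{prop:basicpropertiesULA}(i); since flatness of $V \subset V'$ gives $V'\otimes_V K = K'$, flat base change identifies its generic fiber with $A|_{X'_{\eta'}}$. Theorem~\ref{thm:nearbycycles} over $V'$ then forces $(Rj_\ast A)|_{X'}\cong Rj'_\ast(A|_{X'_{\eta'}})$. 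For (iii), Proposition~\ref{prop:basicpropertiesULA}(ii) shows that $\mathbb{D}_{X/S}(Rj_\ast A)$ is ULA over $S$ and its formation commutes with arbitrary base change; restricting to $X_\eta$ produces $\mathbb{D}_{X_\eta/\eta}(A)$ (relative and absolute Verdier duality agreeing over a point), so uniqueness yields the identification with $Rj_\ast \mathbb{D}_{X_\eta/\eta}(A)$. For (iv), exterior tensor products of ULA sheaves remain ULA by symmetric monoidality of $\mathcal{C}'_S$; thus $Rj_\ast A \boxtimes Rj_\ast B$ is ULA on $X\times_S Y$ over $S$ with generic fiber $A\boxtimes B$, and so must coincide with $Rj_\ast(A\boxtimes B)$.

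The nearby cycles functor $R\psi=i^\ast Rj_\ast$ then inherits properties (i)--(iv) from those of $Rj_\ast$ together with standard compatibilities of $i^\ast$: it preserves constructibility; it commutes with base change along faithfully flat extensions $V\to V'$, which are automatically local maps so that the closed fiber of $V'$ lies over that of $V$; it commutes with relative Verdier duality when applied to ULA sheaves, again by the base change statement of Proposition~\ref{prop:basicpropertiesULA}(ii); and it commutes with exterior tensor products via proper base change for the closed immersion $i$. The main point of possible subtlety will be checking that each uniqueness argument applies cleanly in setting (C) with $L$-coefficients, where ULA objects are controlled through integral structures as in Proposition~\ref{prop:ULAintegralstructure}; however, every step above is phrased as an equality of two ULA objects with the same restriction to $X_\eta$, so Theorem~\ref{thm:nearbycycles} applies directly in either setting without invoking any further descent.
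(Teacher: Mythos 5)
Your argument is correct and is essentially the paper's own proof: each property is deduced from preservation of universal local acyclicity under pullback, relative Verdier duality, and exterior tensor products, combined with the uniqueness of ULA extensions supplied by the equivalence of Theorem~\ref{thm:nearbycycles}, plus (for $R\psi$) the compatibility of $\mathbb D_{X/S}$ with base change on ULA objects. One cosmetic point in (ii): you do not need (and in general do not have) $V'\otimes_V K=K'$ — flatness merely forces $V\to V'$ to be injective, so the generic point of $\Spec V'$ lies over that of $\Spec V$ and $X'_{\eta'}\to X$ factors through $j$, which is all that the identification of generic fibres requires.
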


We note that part (iii) was observed by Fujiwara, cf. \cite[Proof of Lemma 1.5.1]{Fujiwara}.

\begin{proof} Part (i) is part of Theorem~\ref{thm:nearbycycles}. Part (ii) follows from preservation of universal local acyclicity under pullback. Part (iii) follows from preservation of universal local acyclicity under relative Verdier duality. We note that to get the same result for $R\psi= i^\ast Rj_\ast$ we also use that formation of relative Verdier duals commutes with any base change for universally locally acyclic sheaves. Finally, part (iv) follows from preservation of universal local acyclicity under exterior tensor products.
\end{proof}

Now we prove Theorem~\ref{thm:nearbycycles}.

\begin{proof}[Proof of Theorem~\ref{thm:nearbycycles}] First we assume that we are in setting (B), which we may embed into setting (A). We start by proving fully faithfulness. In fact, for any $A\in D^{\mathrm{ULA}}(X/S)$, the natural map
\[
A\to Rj_\ast j^\ast A
\]
must be an isomorphism. This follows from Proposition~\ref{prop:basicpropertiesULA}~(iv) applied to $Y=\Spec K\to \Spec V$. (We note that for a general valuation ring, this may not be of finite type over $S$, but one can still write it as a limit of quasicompact open subsets, giving the conclusion by passing to filtered colimits.) This immediately gives fully faithfulness. It remains to show that
\[
j^\ast: D^{\mathrm{ULA}}(X/S)\hookrightarrow D_\cons(X_\eta)
\]
is essentially surjective: Indeed, we have just seen that the inverse functor is necessarily given by $Rj_\ast$. We note that even for $S=\Spec K$ a field, this is Deligne's theorem on universal local acyclicity over a field, which we will reprove here.

At this point, we follow an argument that goes back to Deligne's proof of constructibility of nearby cycles, \cite{SGA412}, cf.~also the appendix of \cite{IllusieAutour}.

We argue by induction on the (relative) dimension $d$ of $X$. We first prove that there is some closed subset $Z\subset X$ whose special fiber is finite such that $(Rj_\ast A)|_{X\setminus Z}$ is universally locally acyclic over $S$. To see this, we may assume that $X$ is affine, and pick some map $g: X\to \mathbb A^1_S$. Taking the strict henselization of $\mathbb A^1_S$ at the generic point of the special fibre gives the spectrum $\Spec W$ of some (henselian) valuation ring $W$ over $V$. Its fraction field $L$ may not be algebraically closed, but at least its absolute Galois group is pro-$p$, where $p$ is the residue characteristic of $V$ (if positive; otherwise $L$ is indeed algebraically closed): Indeed, the residue field of $W$ is separably closed, and its value group agrees with the value group of $V$, which is divisible. Let $\overline{W}$ be an absolute integral closure of $W$, and $X'=X\times_{\mathbb A^1_S} \Spec \overline{W}$, with $j': X'_\eta\to X'$ the open immersion of the generic fibre. By induction, $Rj'_* (A|_{X'_\eta})$ is universally locally acyclic over $\Spec \overline{W}$. By approximation, we can replace $\overline{W}$ by a finite extension of $p$-power degree of $W$; by the structure of curves over absolutely integrally closed valuation rings (in particular, semistable reduction), any such finite extension is itself the strict henselization of a smooth curve $C$ over $V$ at a generic point of the special fibre. Thus, by a spreading out argument, we can construct an \'etale map $C_0\to \mathbb A^1_S$ and a finite extension of $p$-power degree $C\to C_0$ such that $C$ and $C_0$ are smooth curves over $V$, and such that $X_C=X\times_{\mathbb A^1_S} C$ has the property that $Rj_{X_{C,\eta}*}(A|_{X_{C,\eta}})$ is universally locally acyclic over $C$, and thus also over $S=\Spec V$ (as $C\to S$ is smooth). As $C\to C_0$ is finite of $p$-power degree, a trace argument implies that also $Rj_{X_{C_0,\eta}*}(A|_{X_{C_0,\eta}})$ is universally locally acyclic over $S$, where $X_{C_0} = X\times_{\mathbb A^1_S} C_0$. Now \'etale descent implies that, on the preimage in $X$ of the open image of $C_0\to \mathbb A^1_S$, also $Rj_* A$ is universally locally acyclic over $S$. The union of the open subsets of $X$ constructed this way define an open subset of $X$ whose complement must have finite special fibre (as otherwise there is some projection to $\mathbb A^1_S$ whose image contains the generic point).

The next reduction is to assume that $X$ is proper, noting that any $X$ admits a compactification (by Nagata, or simply locally by embedding into projective space); also, any $A\in D_\cons(X_\eta,\Lambda)$ extends to the compactification through extension by $0$. In this case, the closed subset $Z\subset X$ constructed above is itself proper over $\Spec V$, with finite special fiber, and thus itself finite over $\Spec V$. Now we are finished by Lemma~\ref{lem:ULAlocalglobal} below.

This finishes the proof in setting (B). Setting (C) formally reduces to the case of a finite extension $L/\mathbb Q_\ell$. In the case of $\mathcal O_L$-coefficients, one can then formally reduce to $\mathcal O_L/\ell^n$-coefficients, which is setting (B). In the setting of $L$-coefficients, we note that essential surjectivity follows from the case of $\mathcal O_L$-coefficients, and this also proves the claim that $Rj_\ast$ takes image in universally locally acyclic sheaves. It remains to prove that if $A\in D^{\mathrm{ULA}}(X/S)$, then the map $A\to Rj_\ast j^\ast A$ is an isomorphism. Noting that $Rj_\ast j^\ast A\in D^{\mathrm{ULA}}(X/S)$ by what we already proved, it suffices to prove that $A=0$ if $j^\ast A=0$. To see this, note that the support of $A$ is a constructible subset of $X$ and hence its image in $S$ is also constructible. Thus, its image in $S$ has a generic point; by base change, we can assume that this is the closed point of $S$. As then the closed point of $S$ is a constructible closed subset, its open complement is quasicompact and hence has a closed point, which we can assume is the generic point of $S$; we can thus assume that $V$ is of rank $1$. Now using Remark~\ref{rem:largecategoryC} one can define a variant of $\mathcal C_S$ using these big categories that admits internal Hom's, and this implies that $A=R\sHom_L(A^\vee,Rf^! L)$ is the Verdier dual (in the sense of the categories in Remark~\ref{rem:largecategoryC}) of its dual $A^\vee$ in $\mathcal C'_S$. But $Rf^! L = Rj_\ast j^\ast Rf^! L$, and hence $A=Rj_\ast R\sHom_L(j^\ast A^\vee,j^\ast Rf^! L)$ where $j^\ast A^\vee = (j^\ast A)^\vee = 0$, and hence $A=0$, as desired.
\end{proof}

\begin{lemma}\label{lem:ULAlocalglobal} Let $f: X\to S$ be a finitely presented proper map of qcqs schemes. Let $A\in D_\et(X,\Lambda)$ in setting (A), and assume that there is some closed subscheme $Z\subset X$ that is finite over $S$, such that $A|_{X\setminus Z}$ is universally locally acyclic. Moreover, assume that $Rf_\ast A\in D_\et(S,\Lambda)$ is universally locally acyclic over $S$, i.e.~locally constant with perfect fibres. Then $A$ is universally locally acyclic over $S$.
\end{lemma}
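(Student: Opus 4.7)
The plan is to show that $A$ is constructible, reduce to $S=\Spec V$ with $V$ an absolutely integrally closed rank $1$ valuation ring, establish the identity $A\cong Rj_{\eta\ast}(A|_{X_\eta})$ by a cone-vanishing argument, and read off the stalk criterion for universal local acyclicity. To show $A$ is constructible, apply $Rf_\ast$ to the open--closed triangle $j_!(A|_U)\to A\to i_\ast(A|_Z)$ with $j\colon U\hookrightarrow X$ and $i\colon Z\hookrightarrow X$: by Proposition~\ref{prop:basicpropertiesULA}(iii), $A|_U$ is perfect-constructible, so $j_!(A|_U)$ is constructible and properness of $f$ makes $Rf_\ast j_!(A|_U)$ constructible, while the middle term $Rf_\ast A$ is constructible by hypothesis. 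Hence $Rf_\ast i_\ast(A|_Z)=g_\ast(A|_Z)$, with $g=f|_Z\colon Z\to S$ finite, is constructible; as $g$ is finite this forces $A|_Z$ and thus $A$ to be constructible. All hypotheses of the lemma are stable under any base change $S'\to S$ (Proposition~\ref{prop:basicpropertiesULA}(i) for $A|_U$, proper base change for $Rf_\ast A$), so by Proposition~\ref{prop:ULAfinitaryarc} and Corollary~\ref{cor:ULAtestrank1} it suffices to verify ULA after any base change $\Spec V\to S$ with $V$ absolutely integrally closed of rank $1$; assume henceforth $S=\Spec V$, with closed point $s$ and generic point $\eta$, and let $j_\eta\colon X_\eta\hookrightarrow X$.

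Next, I would show the unit $A\to Rj_{\eta\ast}(A|_{X_\eta})$ is an isomorphism by showing its cone $C$ vanishes. Clearly $C|_{X_\eta}=0$ since $j_\eta^\ast Rj_{\eta\ast}=\mathrm{id}$, and $C|_U=0$ because for every geometric point $\overline{u}\in U_s$, Proposition~\ref{prop:basicpropertiesULA}(v) applied to the ULA sheaf $A|_U$ (with the unique nontrivial generization $\eta$ of $s$) combined with $U_{(\overline{u})}=X_{(\overline{u})}$ gives $A_{\overline{u}}=R\Gamma(X_{(\overline{u}),\eta},A)=(Rj_{\eta\ast}(A|_{X_\eta}))_{\overline{u}}$. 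Thus $C$ is supported on the finite discrete set $Z_s=Z\times_S s$ of closed points above $s$. Since $S$ is connected and $Rf_\ast A$ is locally constant perfect, it is constant, say $M_S$ for some perfect $M\in D(\Lambda)$; proper base change gives $Rf_\ast Rj_{\eta\ast}(A|_{X_\eta})=R\iota_{\eta\ast}((Rf_\ast A)|_\eta)=R\iota_{\eta\ast}M_\eta$ for $\iota_\eta\colon\eta\hookrightarrow S$, and the natural unit map $M_S\to R\iota_{\eta\ast}M_\eta$ is an isomorphism because $V$ is absolutely integrally closed of rank $1$ (the complement $\{s\}$ is a single geometric point, and stalks match). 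Hence $Rf_\ast C=0$; since $C$ is supported on the finite set $Z_s$ which $f$ collapses to $s$, this forces $C=0$.

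The identity $A\cong Rj_{\eta\ast}(A|_{X_\eta})$ immediately yields condition~(v) of Proposition~\ref{prop:basicpropertiesULA} for $A/S$: for every geometric point $\overline{x}\in X$ over $s$, $A_{\overline{x}}=R\Gamma(X_{(\overline{x}),\eta},A)$ (the case $\overline{t}=\overline{s}$ being trivial). Running the same argument after every further base change $S'\to S$ yields condition~(v) universally, and the equivalence between the classical stalk formulation of universal local acyclicity and dualizability in $\mathcal C'_S$ for constructible sheaves (the Lu--Zheng perspective underlying Section~3, cf.\ Proposition~\ref{prop:ULAfirstchar}) then gives that $A$ is ULA. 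The main obstacle is arranging this concluding inference so that it does not loop back through Theorem~\ref{thm:nearbycycles}, into whose proof the present lemma feeds; the cone-vanishing computation itself is a clean direct argument using only proper base change and the rank-$1$ hypothesis on $V$.
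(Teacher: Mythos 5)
Your argument has a genuine gap, and it is exactly the one you flag at the end without resolving. The concluding inference --- from the stalkwise condition ``$A_{\overline{x}}\cong R\Gamma(X_{\overline{x}}\times_S\eta,A)$ after every base change'' to dualizability of $(X,A)$ in $\mathcal C'_S$ --- is precisely the implication (iv)$\Rightarrow$(i) of Theorem~\ref{thm:ULAcorrect}, which the paper proves only \emph{after}, and by means of, Theorem~\ref{thm:nearbycycles}; and Lemma~\ref{lem:ULAlocalglobal} is itself an ingredient in the proof of Theorem~\ref{thm:nearbycycles}, so invoking that equivalence here is circular. Proposition~\ref{prop:ULAfirstchar} does not bridge the gap: it merely re-expresses dualizability as the isomorphism $\pi_1^\ast\mathbb D_{X/S}(A)\dotimes_\Lambda\pi_2^\ast A\to R\sHom_\Lambda(\pi_1^\ast A,R\pi_2^!A)$ on $X\times_S X$, and passing from stalkwise local acyclicity to that isomorphism is the hard content (essentially Deligne's theorem, which the paper is in the middle of reproving), not something available at this stage. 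Likewise, your correctly established identity $A\cong Rj_{\eta\ast}(A|_{X_\eta})$ over a rank~$1$ absolutely integrally closed $V$ (the cone-vanishing computation via proper base change is fine) does not by itself give dualizability: that would require knowing that $Rj_\ast$ of the generic-fibre sheaf is universally locally acyclic, which is again Theorem~\ref{thm:nearbycycles}.

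The paper's proof avoids all of this by verifying the dualizability criterion of Proposition~\ref{prop:ULAfirstchar} directly on $X\times_S X$, with no reduction of the base and no stalk criterion: the map above is an isomorphism on $(X\setminus Z)\times_S X$ and on $X\times_S(X\setminus Z)$ by dualizability of $(X\setminus Z,A|_{X\setminus Z})$, so its cone is supported on $Z\times_S Z$, which is finite over $S$; hence pushforward to $S$ is conservative on the cone, and after pushing forward to $S$ the map being an isomorphism unravels to exactly the hypothesis that $Rf_\ast A$ is universally locally acyclic over $S$. If you want to salvage your route, you must replace the final step by such a direct verification of dualizability (or an equivalent $2$-categorical argument); as written, the proof does not close.
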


\begin{proof} We have to see that the map
\[
(X,A)^\vee\otimes (X,A)\to \sHom_{\mathcal C'_S}((X,A),(X,A))
\]
in $\mathcal C'_S$ is an isomorphism; equivalently, the map
\[
\pi_1^\ast \mathbb D_{X/S}(A)\dotimes_\Lambda \pi_2^\ast A\to R\sHom_\Lambda(\pi_1^\ast A,R\pi_2^! A)
\]
is an isomorphism on $X\times_S X$. We will prove that it is an isomorphism away from $Z\times_S Z$, and after taking the pushforward to $S$. This will give the claim: The cone of this map is supported on $Z\times_S Z$, which is finite over $S$, hence pushforward to $S$ is conservative.

Restricting to $(X\setminus Z)\times_S X$, the map is an isomorphism as $A|_{X\setminus Z}$ is universally locally acyclic over $S$, so that in $\mathcal C'_S$, we have
\[
(X\setminus Z,A|_{X\setminus Z})^\vee\otimes (X,A)\cong \sHom_{\mathcal C'_S}((X\setminus Z,A|_{X\setminus Z}),(X,A)).
\]
Similarly, the restriction to $X\times_S (X\setminus Z)$ is an isomorphism, using this time that
\[
(X,A)^\vee\otimes (X\setminus Z,A|_{X\setminus Z})\cong \sHom_{\mathcal C'_S}((X,A),(X\setminus Z,A|_{X\setminus Z})),
\]
by dualizability of the second factor.

It remains to prove that the pushforward to $S$ is an isomorphism. But unraveling, this exactly amounts to the question whether $Rf_\ast A$ is universally locally acyclic over $S$, which we have assumed.
\end{proof}

Using these results, we see that our definition of universal local acyclicity agrees with the usual definition. More precisely:

\begin{theorem}\label{thm:ULAcorrect} Let $f: X\to S$ be a separated map of finite presentation between qcqs schemes and let $A\in D(X)$ in one of the settings (B) and (C). The following conditions are equivalent.
\begin{enumerate}
\item[{\rm (i)}] The pair $(X,A)$ defines a dualizable object in the symmetric monoidal $2$-category of cohomological correspondences over $S$.
\item[{\rm (ii)}] The following condition holds after any base change in $S$. For any geometric point $\overline{x}\to X$ mapping to a geometric point $\overline{s}\to S$, and a generization $\overline{t}\to S$ of $\overline{s}$, the map
\[
A|_{\overline{x}} = R\Gamma(X_{\overline{x}},A)\to R\Gamma(X_{\overline{x}}\times_{S_{\overline{s}}} S_{\overline{t}},A)
\]
is an isomorphism.
\item[{\rm (iii)}] The following condition holds after any base change in $S$. For any geometric point $\overline{x}\to X$ mapping to a geometric point $\overline{s}\to S$, and a generization $\overline{t}\to S$ of $\overline{s}$, the map
\[
A|_{\overline{x}} = R\Gamma(X_{\overline{x}},A)\to R\Gamma(X_{\overline{x}}\times_{S_{\overline{s}}} \overline{t},A)
\]
is an isomorphism.
\item[{\rm (iv)}] After base change along $\Spec V\to S$ for any rank $1$ valuation ring $V$ with algebraically closed fraction field $K$ and any geometric point $\overline{x}\to X$ mapping to the special point of $\Spec V$, the map
\[
A|_{\overline{x}} = R\Gamma(X_{\overline{x}},A)\to R\Gamma(X_{\overline{x}}\times_{\Spec V}\Spec K,A)
\]
is an isomorphism.
\end{enumerate}
Moreover, these conditions are stable under any base change, and can be checked arc-locally on $S$.
\end{theorem}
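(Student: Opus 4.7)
The plan is to close the cycle (i) $\Rightarrow$ (ii), (iii) $\Rightarrow$ (iv) $\Rightarrow$ (i), and then read off the remaining base-change and arc-local assertions. The implications (i) $\Rightarrow$ (ii) and (i) $\Rightarrow$ (iii) are already in hand: universal local acyclicity is stable under base change by Proposition~\ref{prop:basicpropertiesULA}(i), and Proposition~\ref{prop:basicpropertiesULA}(v) then gives both stalk descriptions. The implication (iii) $\Rightarrow$ (iv) is tautological. For (ii) $\Rightarrow$ (iii) I would base change to the scheme-theoretic closure $S'$ of the image of $\overline{t}$ in $S$: then $\overline{t}$ becomes the generic point of $S'$, its strict localization $S'_{\overline{t}}$ is just $\Spec$ of a separable closure of the function field, and for constructible $A$ the comparison map $X_{\overline{x}} \times_{S_{\overline{s}}} S'_{\overline{t}} \to X_{\overline{x}} \times_{S_{\overline{s}}} \overline{t}$ is an isomorphism on cohomology by invariance of \'etale cohomology of constructible sheaves under extension of separably closed base fields.

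The main content is (iv) $\Rightarrow$ (i). Condition (iv) is visibly preserved under base change, since any rank $1$ absolutely integrally closed valuation ring mapping to $S'$ also maps to $S$. Hence Corollary~\ref{cor:ULAtestrank1} reduces the problem to showing that $A$ is universally locally acyclic when $S = \Spec V$ for such a $V$ with fraction field $K$. Writing $j \colon X_K \hookrightarrow X$ for the inclusion of the generic fibre, Theorem~\ref{thm:nearbycycles} asserts that $Rj_\ast$ lands in $D^{\mathrm{ULA}}(X/S)$ and that $j^\ast$ is an equivalence $D^{\mathrm{ULA}}(X/S) \to D(X_K)$ with this inverse. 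It therefore suffices to verify that the adjunction unit $A \to Rj_\ast j^\ast A$ is an isomorphism. This is tautological over $X_K$; on a geometric stalk at a point $\overline{x} \to X$ over the closed point of $\Spec V$ it computes precisely the map
\[
A|_{\overline{x}} \to R\Gamma(X_{\overline{x}} \times_{\Spec V} \Spec K, A)
\]
that (iv) asserts to be an isomorphism. Source and target being constructible, this suffices.

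The remaining assertions are then automatic: stability under arbitrary base change is manifest already in condition (iv), and the arc-local character of the equivalent conditions is Proposition~\ref{prop:ULAfinitaryarc}, which records that universally locally acyclic sheaves form an arc-sheaf on $S$. I expect the main obstacle to be precisely the reduction to the rank $1$ absolutely integrally closed valuation ring case, but the heavy lifting is already absorbed into Corollary~\ref{cor:ULAtestrank1} and Theorem~\ref{thm:nearbycycles}; given those, the remaining argument is essentially bookkeeping.
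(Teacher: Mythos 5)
Your proposal is correct and takes essentially the same route as the paper: (i)$\Rightarrow$(ii),(iii) via Proposition~\ref{prop:basicpropertiesULA}, then the key implication (iv)$\Rightarrow$(i) by reducing via Corollary~\ref{cor:ULAtestrank1} to $S=\Spec V$ of rank $1$ with algebraically closed fraction field, where Theorem~\ref{thm:nearbycycles} identifies (i) with the unit $A\to Rj_\ast j^\ast A$ being an isomorphism, which is checked on stalks exactly by condition (iv), with base-change stability and arc-locality read off from Proposition~\ref{prop:ULAfinitaryarc}. The only deviation is your explicit (ii)$\Rightarrow$(iii) step via passage to the closure of $\overline{t}$ and invariance under extension of separably closed base fields, which the paper sidesteps by simply noting that (iv) is a special case of each of (ii) and (iii); this extra step is harmless.
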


\begin{proof} By Proposition~\ref{prop:basicpropertiesULA}, (i) implies (ii) and (iii), and each of them has (iv) as a special case. Thus, it remains to prove that (iv) implies (i). By Corollary~\ref{cor:ULAtestrank1}, we can assume that $S=\Spec V$ is the spectrum of an absolutely integrally closed valuation ring of rank $1$. Then Theorem~\ref{thm:nearbycycles} shows that (i) is equivalent to the map $A\to Rj_\ast j^\ast A$ being an isomorphism, where $Rj_\ast j^\ast A$ is also constructible. It is clearly an isomorphism in the generic fibre, so one has to check that it is an isomorphism in the special fibre. Checking stalkwise, this is exactly the condition (iv).

The final sentence comes from Proposition~\ref{prop:ULAfinitaryarc}.
\end{proof}

We note the following corollary that we will use in the next section; it states that invariance of cohomology under change of algebraically closed base field holds in fact more generally for change of absolutely integrally closed valuation rings.

\begin{corollary}[{\cite[Corollary 4.2.7]{HuberBook}}]\label{cor:invarianceofcohomology} Let $V\to V'$ be a faithfully flat map of absolutely integrally closed valuation rings and let $X$ be a scheme of finite type over $V$, with base change $X'$ over $V'$. Let $A\in D_\et(X,\Lambda)$ in setting (A). Then the map
\[
R\Gamma(X,A)\to R\Gamma(X',A|_{X'})
\]
is an isomorphism.

In case $A\in D_\et^+(X,\Lambda)$, the same statement holds for any scheme $X$ over $V$, not necessarily of finite type.
\end{corollary}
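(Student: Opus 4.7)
The plan is to bootstrap from the classical invariance of \'etale cohomology under extensions of algebraically closed fields, using the nearby cycles equivalence of Theorem~\ref{thm:nearbycycles}. First I would reduce to the case where $X$ is of finite presentation over $V$ and $A$ is constructible. For unbounded $A$ on a finite-type $X$, Lemma~\ref{lem:finitecohomdim} provides finite $\ell$-cohomological dimension on both sides, so the left-completion identity $A = R\varprojlim_n \tau^{\geq -n} A$ reduces everything to bounded-below $A$; the bounded-below case for an arbitrary qcqs $X$ reduces to the finite-type case by writing $X$ as a cofiltered limit of finite-type $V$-schemes along affine transitions and invoking the standard continuity of bounded-below \'etale cohomology; and a Postnikov-plus-filtered-colimit argument (valid in finite cohomological dimension) further reduces to constructible $A$.

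For $A \in \mathcal D_\cons(X, \Lambda)$, I would then consider the adjunction unit $A \to Rj_\ast j^\ast A$, where $j \colon X_\eta \hookrightarrow X$ is the inclusion of the generic fibre. By Theorem~\ref{thm:nearbycycles}, $Rj_\ast j^\ast A$ is constructible and universally locally acyclic over $V$, and the cone $C$ of this unit satisfies $j^\ast C = 0$, hence is supported on $X \setminus X_\eta$. Using the long exact sequence in cohomology, it suffices to prove the statement separately for $B := Rj_\ast j^\ast A$ and for $C$.

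For the ULA part $B = Rj_\ast A_\eta$ with $A_\eta = j^\ast A$, Theorem~\ref{thm:nearbycycles} gives $R\Gamma(X, B) = R\Gamma(X_\eta, A_\eta)$; by Corollary~\ref{cor:nearbycyclesnice}(ii), the formation of $Rj_\ast$ commutes with the flat base change $V \to V'$, so $B|_{X'} \cong Rj'_\ast(A_\eta|_{X'_{\eta'}})$ and thus $R\Gamma(X', B|_{X'}) = R\Gamma(X'_{\eta'}, A_\eta|_{X'_{\eta'}})$. Under these identifications, the comparison map becomes pullback along the extension of algebraically closed fields $K = \mathrm{Frac}(V) \to K' = \mathrm{Frac}(V')$, which is an isomorphism by the classical invariance of \'etale cohomology of finite-type schemes under such extensions. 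For the cone $C$, I would induct on the rank of $V$: when $V$ is a field one has $X = X_\eta$ and $C = 0$; for $V$ of finite rank with minimal nonzero prime $\mathfrak p$, the support of $C$ lies in $X \times_V V/\mathfrak p$, a finite-type scheme over the absolutely integrally closed valuation ring $V/\mathfrak p$ of strictly smaller rank, and choosing a prime $\mathfrak p' \subset V'$ lying over $\mathfrak p$ (which exists by faithful flatness and makes $V/\mathfrak p \to V'/\mathfrak p'$ again faithfully flat between absolutely integrally closed valuation rings) lets the inductive hypothesis apply. The infinite-rank case reduces to finite rank by approximating $X$ from finite-type data over a lower-rank absolutely integrally closed sub-valuation ring, compatibly with $V'$.

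The hard part will be the bookkeeping in the cone induction: verifying that the chosen primes of $V'$ over $\mathfrak p$ produce genuinely faithfully flat extensions of quotient valuation rings at each rank, and handling infinite-rank valuation rings via approximation. Once these reductions are in place, the core of the argument -- Theorem~\ref{thm:nearbycycles} reducing the ULA case to classical invariance across extensions of algebraically closed fields -- is essentially formal.
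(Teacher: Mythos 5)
Your proposal is essentially the paper's own proof: reduce to constructible $A$ on a finite-presentation $X$ over a finite-rank $V$, split off the universally locally acyclic part via the triangle $A\to Rj_\ast j^\ast A\to C$, use Theorem~\ref{thm:nearbycycles} (plus base-change compatibility) to reduce the ULA part to classical invariance of \'etale cohomology under extensions of algebraically closed fields, and handle the cone by induction on the rank of $V$. The only points to tighten are ones the paper also handles or suppresses: reduce first to $X$ affine (Theorem~\ref{thm:nearbycycles} requires $X$ separated), and in the cone step take $\mathfrak p'$ to be the \emph{minimal} prime of $V'$ over $\mathfrak p V'$ (which does contract to $\mathfrak p$, and gives a faithfully flat map $V/\mathfrak p\to V'/\mathfrak p'$ of absolutely integrally closed valuation rings), so that $V(\mathfrak p')$ is the full preimage of $V(\mathfrak p)$ and the induction compares the correct supports.
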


\begin{proof} We can assume $\Lambda=\mathbb Z/\ell^n\mathbb Z$, and we can assume that $X$ is affine, and of finite presentation (by choosing a closed immersion). As $X$ has finite $\ell$-cohomological dimension by Lemma~\ref{lem:finitecohomdim}, we can reduce to $A$ being constructible. By approximation, we can assume that $V$ is of finite rank. Arguing by induction on the rank of $V$, we can use Theorem~\ref{thm:nearbycycles} and the triangle $A\to Rj_\ast j^\ast A\to A'$ to reduce to the case that $A=Rj_\ast j^\ast A$ is universally locally acyclic (as $A'$ is supported on a proper closed subset of $\Spec V$, and we can apply the induction hypothesis). In that case $R\Gamma(X,A) = R\Gamma(X_K,A|_{X_K})$ where $K$ is the fraction field of $V$, and similarly for $V'$. This reduces us to the case where $V$ and $V'$ are algebraically closed fields, and the result is the classical result on invariance of cohomology under change of algebraically closed base field.

For the final sentence, we can reduce to $A$ sitting in a single degree and $\Lambda=\mathbb Z/\ell^n\mathbb Z$, and then again to constructible sheaves. Moreover, one can assume $X$ is affine. Now the result follows by writing $X$ as a cofiltered limit of affine schemes of finite type, approximating the constructible sheaf, and using that \'etale cohomology becomes a filtered colimit.
\end{proof}

\section{Universally submersive descent}

The results of this section are due to Gabber \cite{GabberLetterMathew}.

\begin{definition}\label{def:submersion} A qcqs map $f: Y\to X$ of schemes is a submersion if the map $|Y|\to |X|$ is a quotient map. The map $f: Y\to X$ is a universal submersion if any base change of $f$ is a submersion.
\end{definition}

\begin{proposition}\label{prop:submersion} A qcqs map $f: Y\to X$ is a universal submersion if and only if for all valuation rings $V$ with fraction field $K\supsetneq V$, and a map $\Spec V\to X$, the map $Y_K\to Y_V$ is not a closed immersion.

In particular, a universal submersion is an arc-cover.
\end{proposition}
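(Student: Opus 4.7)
The plan is to prove each direction of the equivalence in turn and then deduce the arc-cover consequence.

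For the forward direction, I would fix $\Spec V \to X$ with $V$ of positive rank and observe that $f_V : Y_V \to \Spec V$ is submersive (by universality), hence a topological quotient map. Therefore $Y_K = f_V^{-1}(\{\eta\})$ is closed in $Y_V$ if and only if $\{\eta\}$ is closed in $\Spec V$; but $V \neq K$ forces $\eta$ to have proper specializations in $\Spec V$, so $\{\eta\}$ is not closed, and hence $Y_K$ is not even topologically closed in $Y_V$, a fortiori not a closed immersion.

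For the reverse direction, the valuative hypothesis is plainly stable under any base change $S' \to X$ (a map $\Spec V \to S'$ composes to $\Spec V \to X$), so it suffices to prove $f$ itself is a submersion. The key construction will be: given a specialization $x_0 \rightsquigarrow x_1$ in $X$ with $x_0 \neq x_1$, there exists a rank $1$ valuation ring $V \subsetneq K = \kappa(x_0)$ dominating the local ring of $\overline{\{x_0\}}$ at $x_1$, yielding $\Spec V \to X$ with $\eta \mapsto x_0$ and $s \mapsto x_1$. Since $V$ has rank $1$, $\Spec K = D(\pi) \subset \Spec V$ for any nonzero $\pi \in \mathfrak{m}_V$, so $Y_K \hookrightarrow Y_V$ is an open immersion, and it is a closed immersion iff $Y_s = Y_V \setminus Y_K$ is clopen (in particular empty or disconnecting $Y_V$). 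I expect the main obstacle to be exhibiting such a rank $1$ valuation ring for arbitrary specializations, which reduces to the classical statement that any proper local subring of a field is dominated by some rank $1$ valuation ring.

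Given this construction, surjectivity of $f$ follows: for $x \in X$ admitting a proper generization $x_0$, apply the construction with $x_1 = x$; for $x$ minimal in $X$ (so $\mathcal{O}_{X,x} = \kappa(x)$), instead use $V = \kappa(x)[[t]]$ with $\Spec V \to X$ given by $\mathcal{O}_{X,x} \twoheadrightarrow \kappa(x) \hookrightarrow V$. In either case $V \neq K$, and the hypothesis that $Y_K \to Y_V$ is not a closed immersion forces $Y_s \neq \emptyset$ (else the map would be an isomorphism), producing a point of $Y$ over $x$. To verify the quotient property I argue by contradiction: suppose $A \subset X$ is constructible with $f^{-1}(A)$ closed but $A$ not stable under specialization, witnessed by $x_0 \in A$, $x_1 \notin A$ with $x_0 \rightsquigarrow x_1$; build rank $1$ $V$ and $\Spec V \to X$ as above. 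The hypothesis gives $Y_K$ not closed in $Y_V$, and since $Y_K = f_V^{-1}(\{\eta\})$ is proconstructible in $Y_V$ (as $\{\eta\} = \bigcap_{0 \neq a \in V} D(a)$), its closure in the spectral space $Y_V$ consists of specializations of points of $Y_K$. So there exist $y_\eta \in Y_K$ and $y_s \in Y_s$ with $y_\eta \rightsquigarrow y_s$; projecting to $Y$ yields $f(y_\eta) = x_0 \in A$ specializing to $f(y_s) = x_1 \notin A$, contradicting closedness of $f^{-1}(A)$.

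For the arc-cover statement, given $\Spec V \to X$ with $V$ of rank $\leq 1$, I would construct a faithfully flat extension $V \to W$ of rank $\leq 1$ valuation rings with a lift $\Spec W \to Y$ as follows: if $V = K$ is a field, surjectivity of the submersion $Y_V \to \Spec V$ supplies some $y \in Y_V$ and we take $W = \kappa(y)$; if $V$ has rank $1$, the specialization $y_\eta \rightsquigarrow y_s$ from the previous step is realized by a rank $1$ valuation ring $W$ on $\kappa(y_\eta)$ dominating the local ring of $\overline{\{y_\eta\}}$ at $y_s$, so that $\Spec W \to Y_V \to Y$ is the desired lift, and $V \to W$ is an injective local map of rank $1$ valuation rings, hence automatically faithfully flat.
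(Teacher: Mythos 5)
Your forward direction is correct and is the same argument as the paper's (a closed immersion would make the preimage of $\{\eta\}$ closed, forcing $\{\eta\}\subset\Spec V$ closed by the quotient property). The genuine gap is in the converse: your ``key construction'' does not exist. It is false that every proper local subring of a field is dominated by a rank $1$ valuation ring of that field: a valuation ring $R$ of rank $\geq 2$ is dominated by no valuation ring of $K=\mathrm{Frac}(R)$ other than itself (valuation rings are maximal with respect to domination), so already for $X=\Spec R$ the specialization from the generic to the closed point has no rank $1$ witness --- not even after enlarging the field, since a rank $1$ valuation ring $W$ of $L\supseteq K$ with $R\subseteq W$ and $\mathfrak m_W\cap R=\mathfrak m_R$ would force $W\cap K=R$ to have rank $\leq 1$. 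Moreover this is not a repairable technicality of your strategy: your converse only ever invokes the hypothesis for rank $1$ valuation rings, but the hypothesis restricted to rank $\leq 1$ valuation rings is essentially the arc-cover condition, and Example~\ref{ex:arcnotsubmersion} of the paper is an arc-cover that is \emph{not} a universal submersion; for that map every rank $1$ test fails to be a closed immersion (an arc-lift over a rank $1$ valuation ring produces a point of $Y_{K'}$ specializing outside $Y_{K'}$, so $Y_{K'}$ is not even topologically closed), so no argument quoting only rank $1$ instances can conclude universal submersiveness. The paper's proof necessarily uses valuation rings of arbitrary rank: it realizes the offending specialization by an arbitrary valuation ring (Chevalley domination), base changes to $X=\Spec V$, notes that $A=f(B)$ is pro-constructible (your restriction to constructible $A$ is also unjustified, though minor --- one needs surjectivity plus quasi-compactness to see $A$ is pro-constructible), hence spectral, picks a closed point $\xi$ of $A$ and replaces $X$ by $\overline{\{\xi\}}$ to reduce to $A=\{\eta\}$; then $Y_K\subset Y_V$ is closed for this possibly high-rank $V$, contradicting the hypothesis. (Your surjectivity step, by contrast, is fine and can even be done uniformly with $V=\kappa(x)[[t]]$ for every point $x$.)

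The same false domination claim infects your ``in particular'' step: the specialization $y_\eta\rightsquigarrow y_s$ in $Y_V$ need not be realized by a rank $1$ valuation ring on $\kappa(y_\eta)$. The correct (standard) fix is a two-step collapse: dominate $\mathcal O_{\overline{\{y_\eta\}},y_s}$ by an arbitrary valuation ring $W_0$, then, using that $V$ has rank $1$, pass to the quotient of $W_0$ by its largest prime lying over $(0)\subset V$ and localize at the minimal nonzero prime of the result; this produces a rank $1$ valuation ring $W$ with $V\to W$ injective and local (hence faithfully flat) and a lift $\Spec W\to Y$. With that replacement, and with the forward direction giving that $Y_K$ is not topologically closed in $Y_V$, the arc-cover conclusion does go through.
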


\begin{proof} Assume that $f$ is a universal submersion. To check the condition, we may assume that $X=\Spec V$. Assume that $Y_K\to Y_V$ was a closed immersion. Then the preimage of $ \Spec K\subset \Spec V$ is closed, so by the assumption that $f$ is a universal submersion, also $ \Spec K \subset \Spec V$ is closed, which is a contradiction.

In the converse direction, as the condition is stable under base change, it suffices to show that $f$ is a submersion. Applying the condition to rank $1$ valuation rings $V=k[[t]]$ for points $\Spec k\to X$, one sees $f$ must be surjective on points. Let $A\subset X$ be a subset whose preimage $B\subset Y$ is closed. Then in particular $A=f(B)\subset X$ is pro-constructible. To show that $A$ is closed, it suffices to show that it is closed under specializations. This reduces us to the case that $X=\Spec V$ is the spectrum of a valuation ring, and we may assume that the generic point lies in $A$. As $A$ is pro-constructible, it is itself spectral, and hence has a closed point $\xi$. Replacing $X$ by the closure of $\xi$, we can assume that the generic point of $X$ is a closed point of $A$. This actually means $A=\Spec K$ is just the generic point, so $B=Y_K\subset Y_V$ is closed, contradicting the assumption.
\end{proof}

\begin{example}[An arc-cover that is not a universal submersion]\label{ex:arcnotsubmersion} We give an example of an arc-cover that is not a universal submersion, showing that universal submersions are strictly between arc-covers and v-covers. Let $K=k((t_1))((t_2))\ldots((t_n))\ldots$, a Laurent series ring in infinitely many variables, with its natural $\mathbb Z^{\mathbb N}$-valued valuation (with the lexicographic ordering), and let $V\subset K$ be its valuation ring. Then $X:=\Spec V = \{s_0,s_1,\ldots,s_n,\ldots,\eta\}$ has a generic point $\eta$, and $s_n$ specializes to $s_m$ if and only if $n\geq m$. Each specialization from $s_{n+1}$ to $s_n$ is covered by the rank $1$ valuation ring $V_n=k((t_1))\ldots((t_{n-1}))[[t_n]]$, so letting $Y=\Spec(\prod_{n\geq 0} V_n)$, the map $Y\to X$ is an arc-cover. (Note that there are no rank $1$ specializations from $\eta$ to any $s_n$, and that $\eta$ lies in the image of $Y$, as the image is pro-constructible.) Note that there is a natural map from $Y$ to $\beta\mathbb N$, the Stone-\v{C}ech compactification of $\mathbb N$ -- this is always true for $\Spec(\prod_{n\geq 0} R_n)$ for rings $R_n$. Now $\mathbb N\subset \beta \mathbb N$ is open, and its preimage in $Y$ is $\bigsqcup_{n\geq 0} \Spec V_n$. This is actually also the preimage of $\Spec V\setminus \{\eta\}\subset \Spec V$: Indeed, under the composite map $Y\to X\to \{s_0,\ldots,s_m\}$ (collapsing all $s_n$, $n\geq m$, and $\eta$ to $s_m$), all of $\Spec(\prod_{n\geq m} V_n)$ maps to $s_m$, so the intersection of these subsets, which is exactly the preimage of $\beta\mathbb N\setminus \mathbb N$, maps to $\eta$.

Thus, we see that in this example the preimage of $\eta=\Spec K\subset X=\Spec V$ in $Y$ is closed, so $Y\to X$ is not a (universal) submersion.
\end{example}

\begin{theorem}\label{thm:etalesheavesdescend} Sending a qcqs scheme $X$ to the category of sheaves on $X_\et$ defines a stack of categories with respect to universal submersions, in particular a v-stack.

In particular, sending a qcqs scheme $X$ to the category of separated \'etale maps $Y\to X$ defines a stack of categories with respect to universal submersions, in particular a v-stack.
\end{theorem}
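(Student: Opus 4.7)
The second statement follows from the first: a separated \'etale map $Y\to X$ is recovered from its sheaf of sections in $\Shv(X_\et)$, and being such a sheaf of sections is a condition preserved by pullback and reflected along faithful covers. So I focus on showing that $X\mapsto \Shv(X_\et)$ is a sheaf of categories for universal submersions. By Zariski descent, by cofiltered approximation of morphisms (noting that $\Shv(-_\et)$ is finitary on qcqs affine schemes and that the valuative criterion of Proposition~\ref{prop:submersion} behaves well in the limit), and by invoking the already-known v-descent for \'etale sheaves of sets (due to Rydh), we reduce to the case of a single finitely presented universal submersion $f:Y\to X$ with $X=\Spec V$ the spectrum of an absolutely integrally closed valuation ring.

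\medskip

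In this situation $|X|$ is a totally ordered spectral chain and $\Shv(X_\et)$ is equivalent to sheaves on $|X|$. The core construction is to produce a valuation ring $W$ together with a morphism $\Spec W\to Y$ over $\Spec V$ such that $V\hookrightarrow W$ is faithfully flat --- equivalently, such that the image of $\Spec W\to|X|$ is all of $|X|$. One builds $W$ by transfinite induction: starting from a point of the generic fibre $Y_K$ (which exists since a universal submersion is surjective on points), one iteratively extends the current partial lift $\Spec W_\alpha \to Y$ across the next specialization in $|X|$. At each successor stage, the valuative criterion of Proposition~\ref{prop:submersion}, applied at the point just reached, supplies a specialization in $Y$ lying over the next specialization in $|X|$; this is encoded as a faithfully flat extension $W_\alpha\hookrightarrow W_{\alpha+1}$ of valuation rings. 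At limit stages, one takes the union of the $W_\alpha$. Once $W$ is constructed, $\Spec W\to \Spec V$ is a v-cover that factors through $Y$, and the desired descent along $Y\to\Spec V$ is deduced from classical v-descent along $\Spec W\to\Spec V$ together with a comparison of the two \v{C}ech nerves, using that pullback along the universally submersive (hence pointwise surjective) map $Y\to X$ is conservative.

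\medskip

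The principal obstacle is the transfinite extension step, which is precisely where the universal submersion hypothesis, as opposed to the weaker arc-cover hypothesis, is essential: an arc-cover only supplies lifts for rank-one specializations, and this is insufficient to traverse $|X|$ past a higher-rank specialization. Indeed, Gabber's Example~\ref{ex:arcnotsubmersion} exhibits an arc-cover over a valuation ring of infinite rank for which no such $W$ exists --- and correspondingly for which \'etale-sheaf descent fails --- confirming both that universal submersions are the correct class for this descent theorem and that the argument above cannot be replaced by a purely arc-theoretic one.
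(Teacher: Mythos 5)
Your core construction does not exist, and this is precisely the point where universal submersions differ from v-covers. You claim to produce, for a universal submersion $Y\to X=\Spec V$, a single valuation ring $W$ faithfully flat over $V$ together with a lift $\Spec W\to Y$; but the existence of such a $W$ is exactly the definition of a v-cover, and universal submersions are strictly more general. Concretely, let $V$ be a valuation ring of rank $2$ with primes $0\subset\mathfrak p\subset\mathfrak m$ and take $Y=\Spec V/\mathfrak p\sqcup\Spec V_{\mathfrak p}\to\Spec V$. This map satisfies the valuative criterion of Proposition~\ref{prop:submersion} (so is a universal submersion), yet any $\Spec W\to Y$ with $W$ a valuation ring lands in a single component and hence misses either the closed or the generic point of $\Spec V$: no faithfully flat $W$ exists. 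Your transfinite induction breaks exactly here: Proposition~\ref{prop:submersion} only says that, for each test valuation ring, the generic-fibre locus is not closed, i.e.\ \emph{some} point of $Y$ over the current point of $|X|$ specializes further; it does not provide a specialization emanating from the particular point your chain has already reached (in the example, the chain is trapped at the closed point of the $\Spec V_{\mathfrak p}$ component, while the available specialization over $\mathfrak m$ starts from the other component). If your construction worked, it would show that universal submersions over valuation rings are v-covers and the theorem would carry no more content than v-descent, whereas the whole point (and Gabber's contribution) is the passage beyond v-covers.

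Two further steps of your reduction are also unjustified: a universal submersion of $\Spec V$ cannot in general be refined by a finitely presented one, and "v-descent for \'etale sheaves of sets due to Rydh" is not available as a black box at this generality --- the theorem is explicitly stated as a strengthening of Rydh's and Bhatt--Mathew's results, so invoking it risks circularity. Note how the paper's proof is organized to avoid the missing $W$: one descends the presheaf of descent-datum-invariant sections as an \emph{arc}-sheaf and then recognizes it as \'etale via the criterion of Lemma~\ref{lem:recognizeetalesheaf}; in the crucial surjectivity step one only needs, from the non-closedness of $Y_K\subset Y_V$, a map $\Spec W\to Y$ whose image in $\Spec V$ goes slightly beyond the generic point, and one combines this with a Zorn-type reduction (the section already extends over every proper closed subset) and arc-descent to replace $\Spec V$ by that image, where $W$ \emph{is} faithfully flat; the cocycle condition is then checked using Lemma~\ref{lem:invarianceaicvaluationring}, i.e.\ Huber's invariance theorem reproved as Corollary~\ref{cor:invarianceofcohomology}. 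Your proposal engages with none of these ingredients, and as written the main step fails.
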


As the proof shows, the fully faithfulness part actually holds in the arc-topology.

\begin{proof} The second part is a consequence of the first: Indeed, then any descent datum for a separated \'etale map gives by descent some sheaf on $X_\et$, which is necessarily representable by an algebraic space \'etale over $X$, and by descent separated. By \cite[Tags 0417, 03XU]{StacksProject}, it is automatically representable a scheme over $X$. Thus, we can concentrate on the first part.

First, we prove fully faithfulness, so take two \'etale sheaves $\mathcal F$, $\mathcal G$ on $X$. This part will actually work in the arc-topology. We want to show that for an arc-cover $Y\to X$, any morphism $f: \mathcal F|_Y\to \mathcal G|_Y$ whose two pullbacks to $Y\times_X Y$ agree descends uniquely to $X$. As the category of \'etale sheaves is generated under colimits by representable sheaves, we can reduce to the case that $\mathcal F$ is representable by some \'etale $X$-scheme $X_i$. Replacing $X$ by $X_i$, we can assume that $\mathcal F=\ast$ is a point, in which case what we have to show is that any \'etale sheaf actually defines an arc-sheaf. Let $\mathcal G'$ be the \'etale sheaf on $X$, taking any \'etale $X'\to X$ to the sections of $\mathcal G(Y\times_X X')$ invariant under the descent datum, so we get a map $\mathcal G\to \mathcal G'$ of \'etale sheaves on $X$, that we need to prove is an isomorphism. It suffices to prove that it is an isomorphism after passing to stalks, so we can assume that $X$ is strictly henselian, and reduce to checking that it is an isomorphism on global sections. It is easy to see that the map $\mathcal G(X)\to \mathcal G(Y)$ is injective (for example, by pulling back to the closed point), so we need to see that any section $s\in \mathcal G(Y)$ invariant under the descent datum descends to $X$. Pulling back to the closed point of $X$, where we get an fpqc cover of a field, hence an ind-fppf cover (along which etale sheaves descend), we find a unique section $s_0\in \mathcal G(X)$ whose pullback to $Y$ agrees with $s$ after pullback to the closed point of $X$. We need to see that $s$ is the pullback of $s_0$. This can be checked on geometric points. Thus, it suffices to check this after pullback to geometric points of $X$; so connecting these to the special point of $X$, we can assume that $X=\Spec V$ is the spectrum of a valuation ring (with algebraically closed fraction field $K$). As $\mathcal G$ takes cofiltered limits of affine schemes to filtered colimits, we can assume that $Y\to X$ is of finite presentation (and an arc-cover), in which case $Y\to X$ splits after pullback to a finite chain of locally closed $\Spec V_i\subset \Spec V$, connecting the special point to the generic point. This finishes the argument.

It remains to prove effectivity of descent. Let $Y\to X$ be a universally submersive cover, and let $\mathcal G$ be any \'etale sheaf on $Y$ with a descent datum to $X$. Sending any $X$-scheme $X'$ to the sections of $\mathcal G(X'\times_X Y)$ invariant under the descent datum defines an arc-sheaf $\mathcal F$ on $X$ whose pullback to $Y_{\mathrm{arc}}$ is the pullback of $\mathcal G$ on $Y_\et$. We need to see $\mathcal F$ comes via pullback from its restriction to the \'etale site of $X$. Lemma~\ref{lem:recognizeetalesheaf} gives an equivalent criterium: Commutation with filtered colimits, invariance under change of separably closed base field, and invariance under passing from a strictly henselian ring to its closed point. The commutation with filtered colimits follows via descent from the same property of the pullback of $\mathcal G$ to $Y_{\mathrm{arc}}$. For the invariance under change of separably closed base field, we can assume that $X$ is a geometric point. In that case, we can assume that also $Y$ is a geometric point, in which case $\mathcal G$ is merely a set, and as $Y\times_X Y$ is connected, there are no nontrivial descent data, so the descent is trivial.

Now we check the injectivity in part (iii) of Lemma~\ref{lem:recognizeetalesheaf}, so we can now assume that $X$ is strictly henselian with closed point $x$. Take any $s,t\in \mathcal F(X)$. The locus where $s=t$ defines an arc-subsingleton sheaf, and after pullback to $Y$ it is representable by an open subset of $Y$. As $Y\to X$ is a submersion (and an arc-cover), this implies that it is representable by an open subset of $X$. If $s=t$ over $x$, then this open subset must be all of $X$, hence $s=t$, giving the injectivity.

It remains to prove surjectivity, and for this we may assume that $X=\Spec V$ is the spectrum of an absolutely integrally closed valuation ring. Pick any $s\in \mathcal F(x)$ and assume that $s$ does not lift to $\mathcal F(X)$. By Zorn's lemma (and the commutation with filtered colimits), we can assume that $s$ does lift to all proper closed subsets $Z\subset X$. But we know that $Y_K\subset Y_V$ is not a closed immersion, so we can find an absolutely integrally closed valuation ring $W$ with a map $\Spec W\to Y$ whose generic point maps to the generic point of $X=\Spec V$, but whose special point does not map to the generic point of $X$. By arc-descent on $X$, we may replace $X$ by the image of $\Spec W\to \Spec V$. So we can assume that $Y=\Spec W$, where $V\to W$ is a faithfully flat extension of absolutely integrally closed valuation rings. In particular, $s$ extends uniquely to a section over $\Spec W$, and its two pullbacks to $\Spec W\times_{\Spec V}\Spec W$ agree as in fact $\mathcal F(\Spec W\times_{\Spec V}\Spec W)\cong \mathcal F(\Spec W)$ by Lemma~\ref{lem:invarianceaicvaluationring} below (and thus in turn agrees with the sections over the closed point).
\end{proof}

\begin{lemma}\label{lem:recognizeetalesheaf} Let $X$ be a qcqs scheme, and let $\mathcal F$ be an arc-sheaf over $X$. Then $\mathcal F$ comes via pullback from a sheaf on $X_\et$ if and only if the following conditions are satisfied.
\begin{enumerate}
\item[{\rm (i)}] The arc-sheaf $\mathcal F$ is finitary, i.e.~for any cofiltered system $X_i=\Spec A_i$ of affine $X$-schemes with limit $X=\Spec A$, the map
\[
\varinjlim_i \mathcal F(X_i)\to \mathcal F(X)
\]
is a bijection.
\item[{\rm (ii)}] For any map $\Spec K'\to \Spec K$ of geometric points over $X$, the map
\[
\mathcal F(\Spec K)\to \mathcal F(\Spec K')
\]
is a bijection.
\item[{\rm (iii)}] For any strictly henselian $X$-scheme $Z$ with closed point $z$, the map $\mathcal F(Z)\to \mathcal F(z)$ is a bijection.
\end{enumerate}

Moreover, it suffices to verify (iii) in the restricted case where $Z$ is the spectrum of an absolutely integrally closed valuation ring.
\end{lemma}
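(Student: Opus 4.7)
The plan is to prove both directions separately, and to obtain the \emph{moreover} clause by proving sufficiency under only the restricted form of (iii).

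\textbf{Necessity.} Suppose $\mathcal F = p^{*}\mathcal G$ for an étale sheaf $\mathcal G$. Then (i) holds because $H^{0}$ on the étale site commutes with cofiltered limits of affine schemes along affine transition maps; (ii) holds because the étale site of a separably closed field is trivial, so $\mathcal G(\Spec K)$ depends only on the image point in $X$ and is therefore unchanged by passing to a larger separably closed field; and (iii) is the classical identity $\mathcal G(Z) = \mathcal G_{z} = \mathcal G(z)$ for $Z$ strictly henselian local with closed point $z$.

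\textbf{Sufficiency under restricted (iii).} I would prove directly that if $\mathcal F$ satisfies (i), (ii), and (iii) restricted to spectra of absolutely integrally closed valuation rings, then $\mathcal F$ is the arc-pullback of an étale sheaf; this yields both the main equivalence and the \emph{moreover} clause in one stroke. Set $\mathcal G := \mathcal F|_{X_{\et}}$; this is an étale sheaf because étale covers are arc-covers and $\mathcal F$ satisfies arc-descent. Write $\tilde{\mathcal G}$ for its arc-sheafification. The counit of the $(p^*,p_*)$-adjunction gives a natural map $\tilde{\mathcal G}\to \mathcal F$ of arc-sheaves, and I want this to be an isomorphism. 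Since both sides are arc-sheaves, it suffices to check on a generating family for the arc-topology, concretely on spectra of absolutely integrally closed valuation rings. For $T = \Spec V$ with $V$ absolutely integrally closed and closed geometric point $z$: the restricted (iii) gives $\mathcal F(T) = \mathcal F(z)$; on the other side, $V$ is strictly henselian with separably closed residue field, so $\mathcal G(T) = \mathcal G_{z} = \mathcal G(z)$, and by (ii) applied to $\mathcal F$ this equals $\mathcal F(z)$. The identification $\tilde{\mathcal G}(T)=\mathcal G(T)$ holds because any arc-cover of $\Spec V$ refines, up to a further absolutely integrally closed valuation ring extension, to a cover admitting a section, so no new sections are introduced by arc-sheafification. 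Once the two sides agree on such $T$, arc-descent on the Čech nerve of an arc-cover $\coprod \Spec V_{\alpha} \to T$ extends the isomorphism to an arbitrary qcqs $X$-scheme $T$.

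\textbf{Main obstacle.} The technical heart lies in two closely related valuation-theoretic manipulations. First, the identification $\tilde{\mathcal G}(\Spec V) = \mathcal G(\Spec V)$ above requires a structural analysis of arc-covers of $\Spec V$ for $V$ absolutely integrally closed, showing they split after passing to absolutely integrally closed valuation ring extensions. Second --- and this is what extracts full (iii) from its restricted form --- given a strictly henselian local $Z$ with closed point $z$, one constructs an arc-cover $\coprod_{\eta}\Spec V_{\eta} \to Z$ indexed by the points $\eta$ of $Z$, each $V_{\eta}$ being an absolutely integrally closed valuation ring with fraction field an algebraic closure of $\kappa(\eta)$ and centered so that its closed point lies over $z$. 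Arc-descent for $\mathcal F$ then computes $\mathcal F(Z)$ as an equalizer of products of terms of the form $\mathcal F(V_{\eta})$ and $\mathcal F(V_{\eta}\times_{Z}V_{\eta'})$; each such term collapses to $\mathcal F(z)$ by combining (ii) with (iii)-restricted, giving $\mathcal F(Z)=\mathcal F(z)$. Both of these reductions exploit exactly the valuation-theoretic moves already used in the proof of Theorem~\ref{thm:etalesheavesdescend}, so no genuinely new geometric input is required beyond the material developed earlier in the section.
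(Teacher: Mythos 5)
Your necessity direction is fine, but the sufficiency argument (which is also where the ``moreover'' clause must be earned) has genuine gaps, and they occur exactly at the point where the real work lies. First, the proposed covering $\coprod_{\eta}\Spec V_{\eta}\to Z$ indexed by \emph{all} points of a strictly henselian $Z$ is not a legitimate covering in the arc-site: covers there are (refinable to) finite families of qcqs maps, and a strictly henselian local ring of dimension $\geq 2$ cannot be covered by finitely many spectra of valuation rings (each has image a chain of specializations); taking the infinite disjoint union, or its affinization $\Spec\prod_\eta V_\eta$, introduces exactly the pathologies of Example~\ref{ex:arcnotsubmersion}, and an arc-sheaf is not automatically a sheaf for such infinite families. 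Second, and more fatally, even granting some cover by valuation rings, your descent computation requires evaluating $\mathcal F$ on the double intersections $\Spec V_\eta\times_Z\Spec V_{\eta'}$, which are neither geometric points, nor strictly henselian, nor spectra of valuation rings; neither (ii) nor the restricted (iii) applies there, so the claim that these terms ``collapse to $\mathcal F(z)$'' presupposes that $\mathcal F$ is already pulled back from the étale site -- this is circular. (In the paper this kind of statement about products of valuation rings is only available for étale sheaves, via Lemma~\ref{lem:invarianceaicvaluationring} resting on Corollary~\ref{cor:invarianceofcohomology}, and the proof of Theorem~\ref{thm:etalesheavesdescend} \emph{uses} the present lemma, so you cannot borrow those moves here.) Third, the step $\tilde{\mathcal G}(\Spec V)=\mathcal G(\Spec V)$ relies on the claim that every arc-cover of an absolutely integrally closed valuation ring splits after a further such extension; this is false for $V$ of rank $\geq 2$ (e.g.\ $\Spec V/\mathfrak p\,\sqcup\,\Spec V_{\mathfrak p}\to\Spec V$ is an arc-cover with no lift of $\Spec W$ for any valuation extension $W$ of $V$) -- this is precisely the difference between the arc- and v-topologies. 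Finally, checking that a map of arc-sheaves is an isomorphism on spectra of a.i.c.\ valuation rings alone also needs an argument.

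The paper's proof proceeds by a different mechanism which you would need to reproduce in some form: for the easy equivalence one takes the pushforward $\mathcal F'$ of $\mathcal F$ to $X_\et$ and checks $g^*\mathcal F'\to\mathcal F|_{Y_\et}$ on stalks using (i)--(iii) directly; for the ``moreover'' clause, surjectivity of $\mathcal F(Z)\to\mathcal F(z)$ is obtained by a Zorn's lemma minimality argument (using (i)) reducing to an irreducible, then absolutely integrally closed local domain, then gluing candidate extensions over blow-ups $X'\to X$ via proper base change on the special fibre and arc-descent, and finally a Riemann--Zariski/Tychonoff compactness argument over the inverse system of all blow-ups to exhibit a single a.i.c.\ valuation ring on which the obstruction to extending $s$ survives, contradicting the restricted (iii); the missing injectivity in (iii) is then recovered by applying the surjectivity statement to the arc-subsingleton sheaf of loci where two sections agree. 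None of this is supplied by your Čech-descent shortcut, so as it stands the proposal does not prove the statement.
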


We note that arc-sheaves are automatically invariant under universal homeomorphisms, in particular the difference between separably closed fields and algebraically closed fields is not relevant here.

\begin{proof} Clearly the conditions are necessary. Conversely, let $\mathcal F'$ be the pushforward of $\mathcal F$ to the \'etale site; we have to see that for all $g: Y\to X$, the map $g^\ast \mathcal F'\to \mathcal F|_{Y_\et}$ is an isomorphism. This can be checked on stalks, so using (i) we can assume that $Y$ is strictly henselian. Let $\overline{y}$ be the closed point of $Y$, mapping to a geometric point $\overline{x}$ of $X$. Then
\[(g^\ast \mathcal F')_{\overline{y}} = \mathcal F'_{\overline{x}}= \mathcal F(X_{\overline{x}})\cong \mathcal F(\overline{x})\cong \mathcal F(\overline{y})\cong \mathcal F(Y),
\]
using (iii) for $X_{\overline{x}}$, (ii), and (iii) for $Y$, respectively. This gives the first part.

Next, assume we know only (i), (ii), the injectivity in (iii), and surjectivity in (iii) when restricted to absolutely integrally closed valuation rings. Take any strictly henselian $X$-scheme $(Z,z)$; we want to show that $\mathcal F(Z)\to \mathcal F(z)$ is surjective. Fix some section $s\in \mathcal F(z)$ and assume that $s$ does not lift to $\mathcal F(Z)$. We can replace $X$ by $Z$ and assume that $X$ is strictly henselian. Consider the partially ordered set of all closed subschemes $Z\subset X$ (necessarily strictly henselian) such that $s$ does not lift to $\mathcal F(Z)$. Using (i), we see that we can apply Zorn's lemma and find a minimal $Z$. Then $Z$ is irreducible, as otherwise $Z=Z_1\cup Z_2$ is a union of two proper closed subschemes to which $s$ lifts, in which case $s$ lifts to $Z$ as $Z_1\sqcup Z_2\to Z$ is an arc-cover (and we have agreement of the lifts $s_1$ (of $s$ to $Z_1$) and $s_2$ (of $s$ to $Z_2$) over $Z_1\cap Z_2$, by the injectivity already proved). Replacing $X$ by $Z$ again, we can assume that $X$ is the spectrum of a strictly henselian domain.

Similarly, if $X'\to X$ is finite, then necessarily $X'$ is a finite disjoint union of strictly henselian schemes whose closed points lie over $\overline{x}$, and in particular we get an injection $\mathcal F(X')\hookrightarrow \mathcal F(X'\times_X \overline{x})$. Applying this observation to $X'\times_X X'$ in case $X'\to X$ is surjective, we see that it suffices to see that $s|_{X'\times_X \overline{x}}$ extends to $X'$. Passing to a limit again, we can assume that $X$ is the spectrum of an absolutely integrally closed local domain.

Now let $g: X'\to X$ be a blowup of $X$. Assume that for all geometric points $\overline{x'}$ of $X'\times_X \overline{x}$, the section $s|_{\overline{x'}}$ extends to $X'_{\overline{x'}}$. Then $s$ extends to a global section of the pullback of $\mathcal F|_{X'_\et}$ to $(X'\times_X \overline{x})_\et$. By proper base change \cite[Tag 0A0C]{StacksProject}, this gives a unique section $s$ of $\mathcal F(X')$. Applying a similar argument to $X'\times_X X'$ and using that $X'\to X$ is an arc-cover then shows that the section of $\mathcal F(X')$ descends to $\mathcal F(X)$.

Note that the locus of geometric points $\overline{x'}$ of $X'\times_X \overline{x}$ where $s|_{\overline{x'}}$ extends to $X'_{\overline{x'}}$ defines an open subspace of $X'\times_X \overline{x}$ (using again condition (i)), so for each blowup we get a nonempty closed subset of $X'\times_X \overline{x}$ where the section $s$ does not lift. By Tychonoff, the inverse limit of these closed subsets, taken over all blowups $X'$ of $X$, is nonempty still. Picking a point in the intersection will then define a local ring which is an absolutely integrally closed valuation ring, where $s$ still does not extend. This contradicts our assumption that (iii) holds for spectra of absolutely integrally closed valuation rings, giving surjectivity in (iii) in general.

Finally, assume we know only (i), (ii), and the bijectivity in (iii) when restricted to absolutely integrally closed valuation rings. By what we have already proved, we need to see that this gives injectivity in (iii) in general. Suppose given $Z$ strictly henselian and two sections $s,t \in \mathcal{F}(Z)$. The locus where $s = t$ defines an arc subsingleton sheaf. As it is a subsingleton sheaf, the injectivity in (iii) is automatically satisfied, so by what we have proved so far, this locus defines an \'etale subsingleton sheaf over the strictly henselian scheme $Z$. Thus, if the locus contains the closed point, it must be everything, giving the injectivity of $\mathcal F(Z)\to \mathcal F(z)$.
\end{proof}

The proof of the following lemma makes a somewhat strange reduction from sheaves of sets to sheaves of abelian groups killed by some integer invertible on the scheme.

\begin{lemma}\label{lem:invarianceaicvaluationring} Let $V\to W$ be a faithfully flat extension of absolutely integrally closed valuation rings. Let $X$ be a scheme over $\Spec V$ with base change $f: X\times_{\Spec V} \Spec W\to X$. Let $\mathcal F$ be an \'etale sheaf (of sets) on $X$. Then the map
\[
\mathcal F(X)\to (f^\ast \mathcal F)(X\times_{\Spec V} \Spec W)
\]
is a bijection.
\end{lemma}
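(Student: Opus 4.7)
The plan is to reduce the statement from sheaves of sets to $H^0$ of constant étale sheaves of abelian groups killed by $\ell$, for which Corollary~\ref{cor:invarianceofcohomology} (in its bounded-below incarnation) applies to arbitrary schemes over $V$.

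Injectivity is immediate: if $s,t \in \mathcal F(X)$ agree after pullback to $X_W$, their equalizer is an open subsheaf of $X$ (a subobject of the terminal sheaf) containing the image of $X_W \to X$; by surjectivity of this faithfully flat map, the equalizer equals $X$, so $s = t$.

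For surjectivity, first reduce Zariski-locally to $X$ affine, in particular qcqs. Since every étale sheaf of sets on a qcqs scheme is a filtered colimit of its subsheaves representable by separated étale $X$-schemes of finite presentation, and since section functors over qcqs schemes commute with filtered colimits of étale sheaves, I would replace $\mathcal F$ by such a representable, say $\pi \colon Y \to X$ separated étale of finite presentation. In this setting, sections of $\pi$ are étale closed immersions, so $\mathcal F(X)$ is canonically the set of clopen subschemes $U \subset Y$ with $\pi|_U$ an isomorphism, and similarly for $X_W$.

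Clopen subschemes of any scheme $Z$ correspond to sections of the constant étale sheaf $\underline{\mathbb Z/\ell\mathbb Z}$ taking values in $\{0,1\}$ (characteristic functions). By Corollary~\ref{cor:invarianceofcohomology}, applied to $\underline{\mathbb Z/\ell\mathbb Z}$ viewed as a degree-zero object of $\mathcal D_\et^+$ --- where the bounded-below case applies since $Y$ is a (not necessarily finite type) scheme over $V$ --- the restriction $H^0(Y, \underline{\mathbb Z/\ell\mathbb Z}) \to H^0(Y_W, \underline{\mathbb Z/\ell\mathbb Z})$ is a bijection, and the characteristic-function condition is pointwise, hence preserved. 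Thus clopen subschemes of $Y_W$ descend uniquely to clopen subschemes of $Y$. Given a section $\tilde s \in \mathcal F(X_W)$ corresponding to $U_W \subset Y_W$, let $U \subset Y$ be the descended clopen; then $U \to X$ is an étale morphism whose base change to $X_W$ is the isomorphism $U_W \to X_W$, and fpqc descent of isomorphisms along the faithfully flat $X_W \to X$ shows that $U \to X$ is itself an isomorphism. This yields the required section in $\mathcal F(X)$.

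The main obstacle is the reduction to the finitely-presented separated representable case: while this is standard compactness in the étale topos of a qcqs scheme, it must be articulated without invoking noetherian hypotheses, and one must verify that the filtered presentation is compatible with the section functor in this generality. A secondary delicate point is the invocation of the bounded-below case of Corollary~\ref{cor:invarianceofcohomology} for the scheme $Y$, which is over $V$ but not of finite type --- precisely the situation that the final clause of that corollary is designed to cover.
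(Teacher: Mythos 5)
Your injectivity argument, your analysis of the separated representable case, and the appeal to the final clause of Corollary~\ref{cor:invarianceofcohomology} (for schemes over $V$ that are not of finite type) are all correct. The gap is the d\'evissage on which everything else rests: it is not true that every \'etale sheaf of sets on a qcqs scheme is a filtered colimit of subsheaves representable by \emph{separated} \'etale $X$-schemes of finite presentation. Subsheaves of a representable sheaf $h_Y$ are exactly the open subspaces of $Y$, so take $X=\mathbb A^1$ and $Y$ the affine line with doubled origin, which is \'etale, quasi-compact, quasi-separated and of finite presentation but not separated over $X$: the two copies of $\mathbb A^1$ are separated representable subsheaves of $h_Y$ with no separated upper bound (any open containing both origins is non-separated), so the poset of such subsheaves is not directed. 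In fact $h_Y$ is not a filtered colimit of separated representables in any form: evaluating on the coherent object $Y$ of $X_\et$ would force $\mathrm{id}_Y$ to factor through some separated $Y_i$, and then $Y\to Y_i\to X$ would exhibit $Y\to X$ as separated. The correct d\'evissage on a qcqs scheme (SGA~4, Exp.~IX, \S 2.7: every sheaf of sets is the filtered colimit of its constructible subsheaves) only produces finitely presented \'etale schemes or algebraic spaces that are quasi-separated, as the example shows; and your key step genuinely needs separatedness, since for non-separated $Y\to X$ a section is an open but not a closed immersion, hence is not encoded by an idempotent in $H^0(Y,\mathbb F_\ell)$, so the descent of clopens via Corollary~\ref{cor:invarianceofcohomology} no longer produces the section. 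Nor can you embed such a constructible sheaf into a separated representable one: a monomorphism $h_Y\hookrightarrow h_Z$ of \'etale sheaves is an open immersion $Y\to Z$, and opens of separated schemes are separated.

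For comparison, the paper's proof sidesteps representability entirely. Since $T\mapsto \Gamma(T,\mathcal F_T)$ is an fpqc sheaf on $X$-schemes and $X\times_{\Spec V}\Spec W\to X$ is an affine faithfully flat cover, surjectivity reduces to showing that the given section over $X\times_{\Spec V}\Spec W$ has equal pullbacks to $X\times_{\Spec V}\Spec(W\otimes_V W)$, i.e.\ is invariant under the descent datum. An equality of two sections can be tested after the injective map $\mathcal F\hookrightarrow\mathbb F_\ell[\mathcal F]$ into the free sheaf of $\mathbb F_\ell$-modules, and for that abelian sheaf Corollary~\ref{cor:invarianceofcohomology} shows the section is pulled back from $X$, hence descent-invariant. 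Your clopen argument is a nice self-contained proof in the separated representable case, but to prove the lemma in general you need a reduction of this kind, which only requires verifying an equality of sections rather than constructing a section of an arbitrary sheaf by hand.
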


\begin{proof} First note that the map is injective, as $X\times_{\Spec V}\Spec W\to Y$ is faithfully flat (in particular, an arc-cover), so one has to prove surjectivity. Equivalently, any section of $(f^\ast \mathcal F)(X\times_{\Spec V}\Spec W)$ is invariant under the descent datum. This can be checked after embedding $\mathcal F$ into the free sheaf of $\mathbb F_\ell$-modules $\mathbb F_\ell[\mathcal F]$ on $\mathcal F$, for some chosen prime $\ell$ invertible in $V$. Thus, we can assume that $\mathcal F$ is an abelian torsion sheaf, killed by some prime $\ell$ invertible in $V$. Now the result follows from a theorem of Huber \cite[Corollary 4.2.7]{HuberBook}, which we have reproved in the previous section as Corollary~\ref{cor:invarianceofcohomology}.
\end{proof}

Combining this with the arc-descent results of Bhatt--Mathew \cite{BhattMathew}, we obtain the following result. Here we denote by $\mathcal D^+_{\mathrm{tor}}(S_\et)$ the bounded to the left derived $\infty$-category of torsion abelian sheaves on $S_\et$.

\begin{theorem}\label{thm:univsubmersivedescent} The association taking any qcqs scheme $S$ to $\mathcal D^+_{\mathrm{tor}}(S_\et)$ defines a sheaf of $\infty$-categories for the topology of universal submersions.
\end{theorem}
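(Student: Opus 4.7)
The plan is to combine Theorem~\ref{thm:etalesheavesdescend} with the arc-descent of \'etale cohomology with torsion coefficients due to Bhatt--Mathew \cite{BhattMathew}. Universal submersions are arc-covers by Proposition~\ref{prop:submersion}, so Bhatt--Mathew applies. Theorem~\ref{thm:etalesheavesdescend} upgrades immediately to stack descent for \'etale sheaves of torsion abelian groups (abelian group objects in an \'etale stack again form an \'etale stack), which is precisely the heart of $\mathcal D^+_{\mathrm{tor}}$. A Postnikov tower argument will knit these two inputs together.

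Fix a universal submersion $g\colon Y\to X$ with \v{C}ech nerve $Y^{\bullet/X}$; the claim is that $g^\ast\colon \mathcal D^+_{\mathrm{tor}}(X_\et)\to \lim_\Delta \mathcal D^+_{\mathrm{tor}}(Y^{\bullet/X}_\et)$ is an equivalence. For fully faithfulness, the mapping anima between $\mathcal F,\mathcal G\in \mathcal D^+_{\mathrm{tor}}(X_\et)$ is a connective truncation of $R\Gamma(X, R\sHom(\mathcal F,\mathcal G))$. After reducing to bounded $\mathcal F$ so that $R\sHom(\mathcal F,\mathcal G)\in \mathcal D^+_{\mathrm{tor}}$, the hypercohomology spectral sequence reduces descent of this complex along $Y^{\bullet/X}\to X$ to arc-descent of $R\Gamma(-, M)$ for torsion abelian \'etale sheaves $M$, which is the Bhatt--Mathew cohomological descent theorem.

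For essential surjectivity, consider a descent datum $(\mathcal F^\bullet)$. Pullbacks along maps of \'etale sites are $t$-exact, so the cohomology sheaves $\mathcal H^i(\mathcal F^\bullet)$ form descent data of torsion abelian \'etale sheaves and descend to sheaves $\mathcal H^i$ on $X_\et$ by the heart descent already noted. I would then inductively construct the Postnikov truncations $\tau^{\leq k}\mathcal F$ on $X$: given $\tau^{\leq k-1}\mathcal F$ whose pullback to $Y^{\bullet/X}$ matches the corresponding truncation of the descent datum, the next Postnikov stage is controlled by a $k$-invariant in an appropriate $\mathrm{Ext}$-group on $X$; by the fully faithfulness already established (at the level of full mapping anima, not merely $\pi_0$), the $k$-invariant of the descent datum on $Y^{\bullet/X}$ descends existently and uniquely to $X$. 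Taking the colimit $\mathcal F = \colim_k \tau^{\leq k}\mathcal F$ yields the desired global object in $\mathcal D^+_{\mathrm{tor}}(X_\et)$, and one checks that its pullback to $Y^{\bullet/X}$ recovers $(\mathcal F^\bullet)$.

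The main obstacle is the coherence of the Postnikov induction in the $\infty$-categorical setting: one needs the descent data of $k$-invariants to glue not merely at the level of $\pi_0$ of mapping anima but compatibly with the full cosimplicial structure on the \v{C}ech nerve. This is precisely what the $\infty$-categorical form of fully faithfulness from the previous step supplies, so once both input ingredients are in hand the induction runs cleanly.
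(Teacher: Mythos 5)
Your essential surjectivity step is essentially the paper's: the paper likewise reduces, via truncations/Postnikov towers and the full faithfulness already established, to objects concentrated in degree $0$, and then descends the underlying sheaf of sets by Theorem~\ref{thm:etalesheavesdescend}, observing that the group structure descends as well and is automatically torsion. (Your colimit $\mathcal F=\colim_k\tau^{\leq k}\mathcal F$ is the right convergence statement, since objects of $\mathcal D^+$ are bounded below and cohomology commutes with filtered colimits.)

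However, your full faithfulness argument has a genuine gap. You propose to identify the descent problem for mapping anima with descent of $R\Gamma\bigl(-,R\sHom(\mathcal F,\mathcal G)|_-\bigr)$ and then invoke Bhatt--Mathew. Two things go wrong. First, $R\sHom(\mathcal F,\mathcal G)$ need not be torsion even for bounded torsion $\mathcal F$ (e.g.\ $\sHom(\mathbb Q_\ell/\mathbb Z_\ell,\mathbb Q_\ell/\mathbb Z_\ell)=\mathbb Z_\ell$), so \cite[Theorem 5.4]{BhattMathew} does not apply to it. Second, and more seriously, the mapping anima in $\lim_\Delta \mathcal D^+_{\mathrm{tor}}(Y^{\bullet/X}_\et)$ are computed on each term $Y^n$ from the \emph{internal Hom of the pullbacks}, $R\sHom_{Y^n}(\mathcal F|_{Y^n},\mathcal G|_{Y^n})$, and this is not the pullback of $R\sHom_X(\mathcal F,\mathcal G)$: $R\sHom$ does not commute with the arbitrary (non-smooth, non-flat) base changes occurring in a \v Cech nerve of a universal submersion. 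For instance with $\mathcal F=j_!\,\mathbb Z/n$ for a quasi-compact open immersion $j$, one has $R\sHom(\mathcal F,\mathcal G)\simeq Rj_\ast(\cdots)$, whose formation fails base change along a specialization. So the spectral-sequence reduction you describe proves descent of the wrong object. The repair is the standard adjunction reformulation, which is exactly how the paper argues: full faithfulness is equivalent to the unit map $\mathcal G\to \lim_\Delta Rg_{n\ast}g_n^\ast\mathcal G$ being an equivalence for every $\mathcal G\in\mathcal D^+_{\mathrm{tor}}(X_\et)$, i.e.\ to the statement that $T\mapsto R\Gamma(T,\mathcal G|_T)$ is a sheaf for universal submersions (checked over each \'etale $U\to X$); since universal submersions are arc-covers by Proposition~\ref{prop:submersion}, this is precisely \cite[Theorem 5.4]{BhattMathew}, with no internal Hom and hence no base-change issue. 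With that substitution your outline agrees with the paper's proof.
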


One can also formally deduce an unbounded variant by passing to left-completions. In particular, $S\mapsto \mathcal D_\et(S,\Lambda)$ in setting (A) defines a sheaf of $\infty$-categories for the topology of universal submersions.

\begin{proof} To prove fully faithfulness, we need to see that for any $A\in D^+_{\mathrm{tor}}(S_\et)$, the functor $T/S\mapsto R\Gamma(T,A|_T)$ defines a sheaf for the topology of universal submersions. In fact, it defines an arc-sheaf, by \cite[Theorem 5.4]{BhattMathew}. For effectivity of descent data, one can then reduce to the case that $A$ is concentrated in degree $0$. By Theorem~\ref{thm:etalesheavesdescend}, it descends as a sheaf of sets, but the group structure descends as well, and is necessarily torsion. 
\end{proof}

\section{Relative perversity}

Finally, we can prove our results on relative perversity. Recall the statement of our main theorem:

\begin{theorem}\label{thm:maintext} Let $f: X\to S$ be a finitely presented map of qcqs $\mathbb Z[\tfrac 1\ell]$-schemes. Consider any of the settings (A), (B) and (C). In case (B), assume moreover that $\Lambda$ is regular (in the weak sense that any truncation of a perfect complex is still perfect). In case (C), assume that any constructible subset of $S$ has finitely many irreducible components.

There is a $t$-structure $({}^{p/S}D^{\leq 0},{}^{p/S} D^{\geq 0})$ on $D(X)$, called the relative perverse $t$-structure, with the following properties.
\begin{enumerate}
\item[{\rm (i)}] An object $A\in D(X)$ lies in ${}^{p/S}D^{\leq 0}$ (resp.~${}^{p/S}D^{\geq 0}$) if and only if for all geometric points $\overline{s}\to S$ with fibre $X_{\overline{s}} = X\times_S \overline{s}$, the restriction $A|_{X_{\overline{s}}}\in D(X_{\overline{s}})$ lies in ${}^p D^{\leq 0}$ (resp.~${}^p D^{\geq 0}$), for the usual (absolute) perverse $t$-structure.
\item[{\rm (ii)}] For any map $S'\to S$ of schemes (with $S'$ satisfying the same condition as $S$, in case (C)) with pullback $X'=X\times_S S'\to X$, the pullback functor $D(X)\to D(X')$ is $t$-exact with respect to the relative perverse $t$-structures.
\item[{\rm (iii)}] In case (A), the full sub-$\infty$-categories ${}^{p/S} \mathcal D^{\leq 0},{}^{p/S} \mathcal D^{\geq 0}\subset \mathcal D(X)$ are stable under all filtered colimits.
\end{enumerate}
\end{theorem}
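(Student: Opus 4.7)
The plan is to first construct the connective half formally, then use v-descent to reduce to a rank-$1$ valuation ring base, and there use the Beilinson--Bernstein--Deligne recollement together with perverse $t$-exactness of nearby cycles to identify the coconnective half. In setting~(A), the fibrewise perversely-connective objects ${}^{p/S}\mathcal D^{\leq 0}$ are closed under extensions, shifts by $[1]$, and arbitrary colimits, and are generated under colimits by a small set, so the presentability criterion produces a $t$-structure with this as its connective half. The entire content of the theorem is then the fibrewise description of the resulting coconnective half.

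Using the descent results Theorems~\ref{thm:univsubmersivedescent} and~\ref{thm:arcdescentD} together with noetherian approximation (and the observation that the fibrewise connective and coconnective conditions are both stable under pullback and satisfy effective descent along maps surjective on geometric points), the problem reduces to the case $S = \Spec V$ with $V$ an absolutely integrally closed valuation ring of rank~$1$: one first v-covers $S$ by spectra of such valuation rings, then arc-covers a higher-rank valuation ring by its rank-$1$ localizations. In this reduced case, $S$ has only two geometric points and the recollement $(j \colon X_\eta \hookrightarrow X, \, i \colon X_s \hookrightarrow X)$ is in full force; by Theorem~\ref{thm:nearbycycles}, $Rj_*$ preserves constructibility, so the BBD gluing theorem produces a $t$-structure on $D(X)$ whose connective half is automatically $\{A : j^*A, i^*A \text{ perversely connective}\}$ and whose coconnective half a priori is $\{A : j^*A \text{ perversely coconnective}, \, i^!A \text{ perversely coconnective}\}$. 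The exact triangle $i^!A \to i^*A \to R\psi(j^*A)$ coming from the recollement shows that, provided the nearby cycles functor $R\psi = i^*Rj_*$ is perverse $t$-exact, $i^!A$ is perversely coconnective if and only if $i^*A$ is, under the hypothesis that $j^*A$ is perversely coconnective; hence the coconnective half admits the desired fibrewise description.

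The main obstacle is therefore perverse $t$-exactness of $R\psi$ over an arbitrary rank-$1$ absolutely integrally closed valuation ring. For henselian traits this is classical (Beilinson, Gabber, Illusie), and I would extend it to the present setting via the \'etale-localization-on-a-smooth-curve argument already used in the proof of Theorem~\ref{thm:nearbycycles}, combined with the fact that $Rj_*$ is the unique ULA extension, which guarantees that nearby cycles computed over $V$ agree with those computed after such approximation. Property~(ii) of the theorem (pullback $t$-exactness) is built into the fibrewise description, and property~(iii) in setting~(A) holds because perverse connectivity is trivially closed under filtered colimits, while perverse coconnectivity on each fibre is preserved under filtered colimits of \'etale sheaves via Artin vanishing. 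Descent and approximation then propagate the existence and fibrewise characterization of the $t$-structure back to arbitrary $S$, the hypotheses in settings (B) and (C) being needed only to ensure that perverse cohomology remains in the constructible subcategory.
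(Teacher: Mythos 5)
Your overall skeleton is the same as the paper's: define the connective half formally (Lurie's presentability criterion in setting (A)), reduce by v-descent and approximation to $S=\Spec V$ with $V$ absolutely integrally closed of rank $1$, and there identify the coconnective half via the recollement triangle $Ri^!A\to i^\ast A\to R\psi(j^\ast A)$ together with perverse $t$-exactness of nearby cycles. But there is a genuine gap at exactly the point you yourself flag as the main obstacle: $t$-exactness of $R\psi$ over a rank-$1$ absolutely integrally closed valuation ring. Your proposed transfer from the henselian-trait case does not work as described. The curve-localization argument in the proof of Theorem~\ref{thm:nearbycycles} establishes constructibility and ULA-ness of $Rj_\ast$ by induction on dimension; it gives no perversity bounds. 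And the base-change compatibility supplied by uniqueness of the ULA extension (Corollary~\ref{cor:nearbycyclesnice}~(ii)) holds only along flat maps of absolutely integrally closed valuation rings and presupposes that $X$ and $A$ are defined over the smaller ring -- which is precisely what fails when you try to descend from $V$ (arbitrary value group, huge residue field) to a trait: $X$ and $A$ only spread out to a finitely generated subring, not to a sub-DVR, and nearby cycles do not commute with such base changes before one knows the conclusion. The paper instead proves Lemma~\ref{lem:nearbycyclestexact} directly: right $t$-exactness from Artin vanishing in the (algebraically closed) generic fibre combined with the crit\`ere r\'eciproque \cite[R\'eciproque 4.1.6]{BBDG}, and then left $t$-exactness by Verdier duality, using that $R\psi$ commutes with duality -- itself a consequence of the ULA machinery. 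Some argument of this direct type is needed; citing the trait case and ``approximating'' does not supply it.

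Two further points are hidden in your closing sentence ``descent and approximation then propagate\dots''. First, the v-cover of a general $S$ by valuation rings has base $\Spec\prod_i V_i$, whose $\pi_0$ is profinite; constructing truncations there is not a componentwise statement, and the paper needs the constructibility of the perversity locus (Lemma~\ref{lem:perverseamplitude}) plus a Hom-vanishing argument to spread truncation triangles from a connected component to an open and closed neighborhood before descending. Second, your claim that the hypotheses in settings (B) and (C) are needed ``only to ensure perverse cohomology remains constructible'' is accurate for (B) but not for (C): with $\mathcal O_L$- or $L$-coefficients one cannot truncate fibrewise at all, and the paper constructs the $t$-structure by writing $B=\varprojlim_n B/\ell^n$ and proving a pro-zeroness statement for ${}^{p/S}\mathcal H^0(B_n)$, controlled via Theorem~\ref{thm:perverseULAmaintext}~(i) at the finitely many geometric generic points of a stratification -- this is exactly where the hypothesis on finitely many irreducible components enters, and none of it is addressed in your sketch.
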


\begin{remark}\label{rem:hypotheses} The hypothesis in case (B) is clearly necessary, already when $X=S=\Spec K$ is a geometric point. The hypothesis in case (C) is not quite optimal, but we note that it is definitely necessary to assume that all constructible subsets of $S$ have only finitely many \emph{connected} components. Indeed, take $X=S$, in which case we want a naive $t$-structure on $D_\cons(S,\mathcal O_L)$ or $D_\cons(S,L)$. Assume that $S$ has some constructible subset with infinitely many connected components. Replacing $S$ by this constructible subset, we can assume that there is a surjective continuous map
\[
S\to \{0,1,2,\ldots,\infty\}.
\]
Now one can look at the dualizable complex
\[
[\mathbb Z_\ell\to \mathbb Z_\ell]
\]
where the map multiplies by $\ell^n$ in the fibre of $S$ over $n$ (where $\ell^\infty=0$). It is easy to see that the truncations of this complex are not constructible (in particular, the kernel of this complex is trivial except in the fibre of $S$ over $\infty$).

One can show that when $S$ is purely of characteristic $0$, this weaker condition is in fact sufficient. To show this, one argues as in the proof below, but using the version of Theorem~\ref{thm:perverseULAmaintext}~(i) given in Remark~\ref{rem:perverseULAchar0}.

On the other hand, when $S$ contains points of positive characteristic, this strengthening of Theorem~\ref{thm:perverseULAmaintext}~(i) fails, and in fact one can find nonzero universally locally acyclic perverse sheaves that vanish in some closed fibers, using Artin-Schreier covers. (We thank Haoyu Hu and Enlin Yang for showing us an explicit example of such a sheaf.) Combining such constructions with the above counterexample can be used to show that having only finitely many irreducible components is essentially necessary.
\end{remark}

We will freely use in the proof that such $t$-structures exist in case $S=\Spec K$ is the spectrum of an algebraically closed field $K$. We advise the reader to read only the proofs in settings (A) and (B) on first reading; in fact, this is necessary to avoid vicious circles.

\begin{proof} Parts (ii) and (iii) are formal consequences of (i). For part (i), assume first that we are in setting (A) or (B). In setting (A), we can formally define a $t$-structure on $\mathcal D_\et(X,\Lambda)$ by taking the connective part ${}^{p/S} \mathcal D^{\leq 0}$ to consist of all $A\in \mathcal D_\et(X,\Lambda)$ such that for all geometric points $\overline{s}\to S$ the restriction $A|_{X_{\overline{s}}}\in {}^p \mathcal D^{\leq 0}(X_{\overline{s}},\Lambda)$, by applying \cite[Proposition 1.4.4.11]{LurieHA}. We have to show that the corresponding coconnective part has the stated characterization, and that it induces a $t$-structure on the constructible objects in case $\Lambda$ is regular.

We start by analyzing the case where $S=\Spec V$ is the spectrum of an absolutely integrally closed valuation ring $V$ of rank $1$. Let $j: X_\eta\subset X$ and $i: X_s\subset X$ be the open and closed immersion of generic and special fibre. Then it follows formally from the definition of the $t$-structure that $A\in {}^{p/S} \mathcal D^{\geq 0}$ if and only if $A|_{X_\eta}\in {}^p \mathcal D^{\geq 0}(X_\eta,\Lambda)$ and $Ri^! A\in {}^p \mathcal D^{\geq 0}(X_s,\Lambda)$. We have to see that these conditions are equivalent to the two conditions $A|_{X_\eta}\in {}^p \mathcal D^{\geq 0}_\et(X_\eta,\Lambda)$ and $i^\ast A\in {}^p \mathcal D^{\geq 0}(X_s,\Lambda)$. Thus, assume $A|_{X_\eta}\in {}^p \mathcal D^{\geq 0}_\et(X_\eta,\Lambda)$. Then we have a triangle
\[
Ri^! A\to i^\ast A\to i^\ast Rj_\ast(A|_{X_\eta})
\]
in $\mathcal D_\et(X_s,\Lambda)$. Thus, it suffices to show that $i^\ast Rj_\ast(A|_{X_\eta})\in {}^p \mathcal D^{\geq 0}_\et(X_s,\Lambda)$. This follows from the perverse $t$-exactness of nearby cycles, Lemma~\ref{lem:nearbycyclestexact} below.

In the case $S=\Spec V$ is the spectrum of an absolutely integrally closed valuation ring $V$ of rank $1$, it remains to show that relative perverse truncation preserves constructible objects in case $\Lambda$ is regular. But constructibility can be checked fibrewise on $S$, and relative perverse truncation commutes with passing to fibres by what we have already established. Thus, the claim reduces to the geometric fibres, where it is standard.

Next, we show that there is a perverse $t$-structure on the full $\infty$-subcategory $\mathcal D_{\cons,\tor}(X,\mathbb Z_\ell)\subset \mathcal D_\cons(X,\mathbb Z_\ell)$ of torsion constructible $\mathbb Z_\ell$-complexes. We observe that as the desired $t$-structure automatically behaves well with respect to base change in $S$, it suffices to construct it locally on $S$ as long as the $\infty$-categories satisfy descent in $S$. By Theorem~\ref{thm:arcdescentD}, this is the case for arc-covers. In particular, using v-descent we can reduce to the case that all connected components of $S$ are spectra of absolutely integrally closed valuation rings.

Assume first that $S$ is connected, so the spectrum of an absolutely integrally closed valuation ring $V$. In that case, by approximation, we can reduce to the case $V$ is of finite rank, and then by arc-descent to the case that $V$ is of rank $1$. We have already handled this case, noting that in this case the $\mathrm{Ind}$-category of $\mathcal D_{\cons,\tor}(X,\mathbb Z_\ell)$ can be identified with the torsion objects in $\mathcal D(X_\et,\mathbb Z_\ell)$, to which the arguments above apply. In general, observe first that if $A\in \mathcal D_{\cons,\tor}(X,\mathbb Z_\ell)$ and $B\in \mathcal D_{\cons,\tor}(X,\mathbb Z_\ell)$ such that all geometric fibres of $A$ are in ${}^p \mathcal D^{\leq 0}$ and all geometric fibres of $B$ are in ${}^p \mathcal D^{\geq 1}$, then $\mathrm{Hom}(A,B)=0$. Indeed, take any map $f: A\to B$. To see that $f=0$, it suffices to show that $f$ vanishes after pullback to all connected components of $S$. But here it follows from the results on the $t$-structure. Thus, to show that these subcategories define a $t$-structure, it suffices to construct the truncations of any $A\in \mathcal D_{\cons,\tor}(X,\mathbb Z_\ell)$. Fix some $c\in \pi_0 S$, giving rise to a connected component $S_c\subset S$. Using the relative perverse $t$-structure on $X_c=X\times_S S_c\to S_c$, we can find a triangle
\[
{}^{p/S_c}\tau^{\leq 0} A_c\to A_c\to {}^{p/S_c}\tau^{\geq 1} A_c
\]
where $A_c=A|_{X_c}$. As everything is constructible, this triangle extends to a similar triangle over an open and closed neighborhood $S'\subset S$ of $S_c$. By Lemma~\ref{lem:perverseamplitude} below, the resulting triangle still reduces to the relative perverse truncation in all fibres, after possibly shrinking $S'$. Thus, the desired truncation functors can be defined on $A$ at least locally on $S$, but then by uniqueness also globally. This finishes the proof of the existence of the $t$-structure on $\mathcal D_{\cons,\tor}(X,\mathbb Z_\ell)$.

In particular, passing to $\mathrm{Ind}$-categories when all connected components of $S$ are absolutely integrally closed valuation rings, we get a $t$-structure on the full $\infty$-subcategory of torsion objects in $\mathcal D_\et(X,\mathbb Z_\ell)$, and then by passing to $\Lambda$-modules and v-descent we get the $t$-structure in setting (A). In setting (B), it remains to prove that the perverse truncations preserve $\mathcal D_\cons(X,\Lambda)$. For this, we can again assume that all connected components of $S$ are absolutely integrally closed valuation rings. Using Lemma~\ref{lem:perverseamplitude}, we can reduce to the connected components. By approximation, we can then also assume that these are of finite rank. In that case, constructibility can be checked on geometric fibres; thus, the claim reduces to the case where $S$ is a geometric point, where the result is standard.

In setting (C), we can formally reduce to the case that $L$ is a finite extension of $\mathbb Q_\ell$, and the case of rational coefficients follows formally from the case of integral coefficients by inverting $\ell$. Now we first show that if $A,B\in \mathcal D_\cons(X,\mathcal O_L)$ have the property that all geometric fibres $A|_{X_{\overline{s}}}\in {}^p \mathcal D^{\leq 0}_\cons(X_{\overline{s}},\mathcal O_L)$ (resp.~$B|_{X_{\overline{s}}}\in {}^p \mathcal D^{\geq 1}_\cons(X_{\overline{s}},\mathcal O_L)$), then $\Hom(A,B)=0$. To see this, write $B$ as the derived limit of the reductions $B_n=B/^{\mathbb L} \ell^n$. Then $B_n$ lies in the corresponding category of type (B), and lies in ${}^{p/S} \mathcal D^{\geq 0}_{\cons,\tor}(X,\mathcal O_L)$. We claim that the system ${}^{p/S}\mathcal H^0(B_n)$ of relatively perverse sheaves on $X/S$ is pro-zero. More precisely, fix a constructible stratification of $S$ over which $B$ becomes universally locally acyclic, and let $\overline{s}_1,\ldots,\overline{s}_r$ be the geometric generic points of the strata of $S$ (of which there are only finitely many by assumption). Choose some $N$ such that $\ell^N$ kills the torsion part of ${}^p \mathcal H^1(B|_{X_{\overline{s}_i}})\in \mathrm{Perv}(X_{\overline{s}_i})$ for all $i=1,\ldots,r$. Then we claim that the transition map ${}^{p/S}\mathcal H^0(B_{N+n})\to {}^{p/S}\mathcal H^0(B_n)$ is zero for all $n$. This can be checked over the stratification, and then over the closure of each irreducible component, and then by Theorem~\ref{thm:perverseULAmaintext}~(i), it can be checked in the geometric fibres $X_{\overline{s}_i}$ for $i=1,\ldots,r$, where it follows from our choice of $N$.

Thus, we see that
\[
\Hom(A,B) = \varprojlim_n \Hom(A,B_n) = \varprojlim_n \Hom(A,{}^{p/S}\mathcal H^0(B_n))=0,
\]
as desired. It remains to see that any $A\in \mathcal D_\cons(X,\mathcal O_L)$ admits a triangle
\[
{}^{p/S} \tau^{\leq 0}A\to A\to {}^{p/S} \tau^{\geq 1} A
\]
where the first term is fibrewise in ${}^p \mathcal D^{\leq 0}$, and the last term is fibrewise in ${}^p \mathcal D^{\geq 1}$. This can be obtained from the similar triangle for $A_n=A/^{\mathbb L} \ell^n$ by passing to an inverse limit, using a similar argument as above for controlling $\ell$-power torsion.
\end{proof}

The following lemmas were used in the proof.

\begin{lemma}\label{lem:nearbycyclestexact} Let $S=\Spec V$ be the spectrum of an absolutely integrally closed valuation ring $V$ of rank $1$, and let $X$ be a finite type $S$-scheme. Let $j: X_\eta\subset X$ and $i: X_s\subset X$ be the open and closed immersion of generic and special fibre. Then for any torsion $\mathbb Z_\ell$-algebra $\Lambda$, the nearby cycles functor
\[
R\psi = i^\ast Rj_\ast: \mathcal D_\et(X_\eta,\Lambda)\to \mathcal D_\et(X_s,\Lambda)
\]
is $t$-exact with respect to the absolute perverse $t$-structures on source and target.
\end{lemma}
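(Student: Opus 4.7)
The strategy is to rederive the classical Gabber--Illusie theorem on perverse $t$-exactness of nearby cycles using the ULA framework of the preceding sections. First, I would reduce to $A\in\mathcal D_\cons(X_\eta,\Lambda)$, using that both the perverse $t$-structure and the functor $R\psi=i^*Rj_*$ are compatible with filtered colimits. Next, by Theorem~\ref{thm:nearbycycles} and Corollary~\ref{cor:nearbycyclesnice}, $Rj_*A$ is universally locally acyclic and $R\psi$ commutes with Verdier duality on constructibles; since Verdier duality exchanges ${}^pD^{\leq 0}$ and ${}^pD^{\geq 0}$, it then suffices to show that $R\psi$ preserves ${}^pD^{\leq 0}$.

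For this I would compute stalks. Applying Proposition~\ref{prop:basicpropertiesULA}(v) to the ULA sheaf $Rj_*A$ at a geometric point $\bar x\to X_s$ (with generic geometric point $\bar\eta\to S$) gives
\[
(R\psi A)_{\bar x}\;=\;R\Gamma(X_{\bar x}\times_S\bar\eta,\,Rj_*A)\;=\;R\Gamma(X_{\bar x,\eta},A),
\]
where I have used that $j^*Rj_*A\cong A$. The key geometric observation is then that the Milnor fibre $X_{\bar x,\eta}$ is either empty or the spectrum of a strictly henselian local ring over $K$: indeed, strict henselization commutes with the tensor with $K$, and inverting $\mathfrak m_V$ in the local ring $\mathcal O_{X,\bar x}$ (whose maximal ideal contains $\mathfrak m_V$) produces a ring with a unique maximal ideal, namely the image of the largest prime of $\mathcal O_{X,\bar x}$ not containing $\mathfrak m_V$. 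Since $K$ is algebraically closed and strict henselization preserves locality, the \'etale cohomology of $X_{\bar x,\eta}$ collapses to the stalk at its closed point $\bar\xi\to X_\eta$:
\[
R\Gamma(X_{\bar x,\eta},A)\;\cong\;A_{\bar\xi}.
\]

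The final step is a dimension comparison. The image $\xi\in X_\eta$ of $\bar\xi$ is the generic specialization of $\bar x$ into the generic fibre, and by upper semi-continuity of fibre dimensions one has $\dim_{X_\eta}\overline{\{\xi\}}\ge c:=\dim_{X_s}\overline{\{x\}}$. Since $A\in{}^pD^{\leq 0}(X_\eta)$, the stalk satisfies $A_{\bar\xi}\in D^{\leq-\dim_{X_\eta}\overline{\{\xi\}}}\subseteq D^{\leq -c}$, so $(R\psi A)_{\bar x}\in D^{\leq -c}$ for every geometric point $\bar x\to X_s$, which is precisely the support condition $R\psi A\in{}^pD^{\leq 0}(X_s)$. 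The main obstacle is verifying in step four that $\mathcal O^{sh}_{X,\bar x}\otimes_V K$ is local whenever nonzero, together with the complementary verification in step five that the closed point $\bar\xi$ lies over a point of $X_\eta$ whose Zariski closure has dimension $\ge c$; both boil down to a careful analysis of how specializations in $X_s$ ``spread out'' in the generic fibre over the valuation ring $V$, and it is here that the rank $1$ absolutely integrally closed hypothesis on $V$ is exploited.
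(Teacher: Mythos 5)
Your opening reduction is the same as the paper's: after passing to constructible torsion coefficients, the commutation of $R\psi$ with Verdier duality (Corollary~\ref{cor:nearbycyclesnice}) reduces the statement to showing that $R\psi$ preserves ${}^p\mathcal D^{\leq 0}$ (for torsion $\mathbb Z_\ell$-coefficients duality in fact exchanges ${}^p\mathcal D^{\leq 0}$ with ${}^p\mathcal D^{\geq 1}$, but this does not affect the reduction). However, your step four is false, and it is the heart of your argument. The Milnor fibre $X_{\bar x,\eta}=\Spec(\mathcal O^{sh}_{X,\bar x}\otimes_V K)$ is essentially never local: already for $X=\mathbb A^1_V$ and $\bar x$ a closed point of the special fibre, the ring $\mathcal O^{sh}_{X,\bar x}\otimes_V K$ has infinitely many maximal ideals (all the $K$-points $x=a$ with $a\in\mathfrak m_V$ survive the localization), and there is no ``largest prime not containing $\mathfrak m_V$''. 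Consequently $R\Gamma(X_{\bar x,\eta},A)$ does not collapse to a stalk $A_{\bar\xi}$; indeed, if it did, vanishing cycles would always vanish. A concrete counterexample is $X=\Spec V[x,y]/(xy-\pi)$ with $\pi$ a pseudouniformizer and $A=\Lambda$ constant: at the node of the special fibre the Milnor fibre is an annulus with $H^1=\Lambda(-1)\neq 0$, while every stalk of $\Lambda$ sits in degree $0$. With step four gone, step five also collapses, since the ``closed point $\bar\xi$'' it works with does not exist. (Your use of Proposition~\ref{prop:basicpropertiesULA}~(v) to compute $(R\psi A)_{\bar x}=R\Gamma(X_{\bar x,\eta},A)$ is fine, but this identity is just the usual stalk formula for $Rj_\ast$ and carries no ULA input.)

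What is needed at this point is an Artin-vanishing bound on the cohomology of Milnor-fibre-type schemes, not a locality statement, and this is exactly how the paper concludes: one checks $i^\ast Rj_\ast A\in{}^p\mathcal D^{\leq 0}(X_s)$ via the criterion of \cite[R\'eciproque 4.1.6]{BBDG}, which characterizes ${}^p\mathcal D^{\leq 0}$ by the vanishing of cohomology in positive degrees over affine (\'etale) opens. For such an affine $U\to X_s$ with henselian lift $\tilde U\to X$ one has $R\Gamma(U,i^\ast Rj_\ast A)=R\Gamma(\tilde U_\eta,A)$, and $\tilde U_\eta$ is a cofiltered limit of affine schemes over the algebraically closed field $K$; Artin vanishing together with $A\in{}^p\mathcal D^{\leq 0}(X_\eta)$ then gives $R\Gamma(\tilde U_\eta,A)\in D^{\leq 0}$. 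If you insist on arguing stalkwise, you must instead prove the estimate $R\Gamma(X_{\bar x,\eta},A)\in D^{\leq -\dim\overline{\{x\}}}$ by a genuine cohomological-dimension argument on the non-local, non-finite-type scheme $X_{\bar x,\eta}$; this is essentially Gabber's original route and is substantially harder than the criterion above.
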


This is the key fact about the usual perverse $t$-structure that we use.

\begin{proof} Forgetting the $\Lambda$-module structure, we can reduce to $\mathcal D_{\cons,\tor}(-,\mathbb Z_\ell)$. As $R\psi$ commutes with Verdier duality and Verdier duality exchanges ${}^p \mathcal D^{\leq 0}_{\cons,\tor}(-,\mathbb Z_\ell)$ and ${}^p \mathcal D^{\geq 1}_{\cons,\tor}(-,\mathbb Z_\ell)$, it suffices to show that $R\psi$ takes ${}^p \mathcal D^{\leq 0}_{\cons,\tor}(X_\eta,\mathbb Z_\ell)$ into ${}^p \mathcal D^{\leq 0}_{\cons,\tor}(X_s,\mathbb Z_\ell)$. But this follows from Artin vanishing and \cite[R\'eciproque 4.1.6]{BBDG}.
\end{proof}

\begin{lemma}\label{lem:perverseamplitude} Let $f: X\to S$ be a finitely presented map of qcqs $\mathbb Z[\tfrac 1\ell]$-schemes, and let $A\in \mathcal D_{\cons,\tor}(X,\mathbb Z_\ell)$ or $A\in\mathcal D_\cons(X,\Lambda)$ in setting (B) with $\Lambda$ regular. The subset $S^{\leq 0}\subset S$ (resp.~$S^{\geq 0}\subset S$) of all points $s\in S$ for which $A|_{X_s}\in {}^p \mathcal D^{\leq 0}$ (resp.~$A|_{X_s}\in {}^p \mathcal D^{\geq 0}$) is a constructible subset of $S$.
\end{lemma}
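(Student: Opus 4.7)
The plan is to reduce to $S$ noetherian via standard limit techniques, prove constructibility of $S^{\leq 0}$ directly from constructibility of fiber dimensions, and handle $S^{\geq 0}$ via Verdier duality combined with noetherian induction and generic universal local acyclicity.

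First I would perform noetherian approximation. Since $f$ is finitely presented and $A$ is constructible, there is a noetherian $\mathbb{Z}[\tfrac{1}{\ell}]$-scheme $S_0$, a finitely presented $f_0: X_0 \to S_0$, and $A_0$ over $X_0$ such that $(f,A)$ is the pullback of $(f_0,A_0)$ along a map $S \to S_0$. Because the fibrewise conditions defining $S^{\leq 0}$ and $S^{\geq 0}$ depend only on geometric fibers, they are the preimages of the analogous sets for $(f_0,A_0)$. Since preimages of constructible sets under morphisms of schemes are constructible, we may assume $S$ is noetherian.

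For $S^{\leq 0}$, set $W_i := \mathrm{supp}\,\mathcal{H}^i(A) \subseteq X$; this is a constructible subset, empty for all but finitely many $i$. Restriction to a geometric fiber is $t$-exact for the standard $t$-structure, so $\mathrm{supp}\,\mathcal{H}^i(A|_{X_{\overline{s}}}) = W_i \cap X_{\overline{s}}$, and the condition $A|_{X_{\overline{s}}} \in {}^p D^{\leq 0}$ becomes $\dim(W_i \cap X_{\overline{s}}) \leq -i$ for all $i$. The key classical input is that for a finite-type morphism of noetherian schemes and a constructible subset $W$ of the source, the function $s \mapsto \dim(W \cap X_{\overline{s}})$ is a constructible function on the target (apply Chevalley to each integral locally closed piece of $W$ and take the maximum). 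Hence each sublevel set is constructible, and their finite intersection $S^{\leq 0}$ is constructible.

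For $S^{\geq 0}$, I would argue by noetherian induction on $|S|$: it suffices to exhibit a dense open $U \subseteq S$ on which $S^{\geq 0} \cap U$ is constructible. The strategy is Verdier duality: since $A|_{X_{\overline{s}}} \in {}^p D^{\geq 0}$ iff $\mathbb{D}_{X_{\overline{s}}}(A|_{X_{\overline{s}}}) \in {}^p D^{\leq 0}$, and by Proposition~\ref{prop:basicpropertiesULA}(ii) the relative dual commutes with base change whenever $A$ is universally locally acyclic, the result will follow from applying the previous paragraph to $\mathbb{D}_{X_U/U}(A|_{X_U})$. It therefore suffices to find $U$ over which $A|_{X_U}$ is ULA. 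This is generic ULA: choose a finite stratification of $X$ into locally closed pieces on which $A$ has locally constant cohomology sheaves with perfect fibers, then pass to a dense open $U \subseteq S$ (using generic flatness and, in positive characteristic, refinement via alterations) over which each stratum is smooth over its image in $U$; stability of ULA under smooth pullback, extensions, and the stalk criterion in Proposition~\ref{prop:basicpropertiesULA}(v) yield that $A|_{X_U}$ is ULA over $U$.

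The main technical obstacle is the generic ULA step, which is a standard result in the spirit of SGA 4\textonehalf\ but requires some bookkeeping with stratifications in the $\infty$-categorical setup; everything else is formal once constructibility of fiber dimensions and the duality characterization of ${}^p D^{\geq 0}$ are combined with noetherian induction.
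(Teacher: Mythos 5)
Your treatment of $S^{\leq 0}$ and the overall architecture for $S^{\geq 0}$ (make $A$ universally locally acyclic over a stratum/dense open, then use that the relative dual commutes with base change and apply the $\leq 0$ case to the dual) match the paper. But the key step, generic universal local acyclicity, is justified incorrectly, and this is a genuine gap. The criterion you propose --- stratify $X$ so that $A$ is lisse on strata and each stratum is smooth over its image in (a shrinking of) $S$, then conclude ULA by smooth pullback, extensions and the stalk criterion --- is false. Take $S=\mathbb{A}^2$, let $f\colon X\to S$ be the blow-up at the origin, $A=\Lambda_X$ the constant sheaf, stratified by the exceptional divisor $E\cong\mathbb P^1$ and its complement: both strata are smooth over their images ($E\to\{0\}$ and an isomorphism elsewhere) and $A$ is lisse on them, yet $A$ is not ULA over $S$, since $Rf_\ast\Lambda$ would then be locally constant (proper pushforward preserves ULA) while it has a skyscraper in degree $2$. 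The failure is precisely that ULA is not inherited by $j_!$/$Rj_\ast$ extensions from strata, so no amount of stratification bookkeeping along these lines will work. Generic ULA is of course true, but its proof is the nontrivial content here: in the paper's framework it comes from Theorem~\ref{thm:nearbycycles} applied over a field (every constructible complex over a field is ULA over that field) together with the finitariness of the ULA condition (Proposition~\ref{prop:ULAfinitaryarc}) to spread the property from points of $S$ to constructible subsets; classically it is Deligne's generic local acyclicity from Th.\ finitude. You would need to invoke one of these rather than the smooth-strata argument.

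A secondary issue is the duality step. You use that $A|_{X_{\overline s}}\in{}^p\mathcal D^{\geq 0}$ iff $\mathbb D_{X_{\overline s}}(A|_{X_{\overline s}})\in{}^p\mathcal D^{\leq 0}$; this is correct for field coefficients, holds with a shift (exchanging ${}^p\mathcal D^{\leq 0}$ with ${}^p\mathcal D^{\geq 1}$, harmless for constructibility) for torsion $\mathbb Z_\ell$-coefficients, but for a general regular $\Lambda$ in setting (B) Verdier duality does not exchange the two halves of the perverse $t$-structure, and the dual need not be so well behaved. The paper circumvents this by replacing the dualizing complex: choosing an injective $\Lambda$-module $I$ with $\Hom_\Lambda(-,I)$ conservative, the functor $R\sHom(A,Rf^!I)=\mathbb D_{X/S}(A)\dotimes_\Lambda I$ still commutes with base change for ULA $A$, and fibrewise $A\mapsto R\sHom(A,Rf^!I)$ is faithful and $t$-exact for the perverse $t$-structure, which is enough. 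Your reduction to noetherian $S$ is fine (it implicitly uses invariance of perversity under extension of algebraically closed base fields, worth a word), and your argument for $S^{\leq 0}$ via supports and Chevalley is essentially the paper's.
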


\begin{proof} The case of $S^{\leq 0}$ is easy: By passing to a stratification of $X$, this case easily reduces to the case that $A$ is locally constant and $X$ is smooth and equidimensional over $S$, where it is clear.

Using Theorem~\ref{thm:nearbycycles} in the case of fields (where it says that all constructible complexes are universally locally acyclic) and Proposition~\ref{prop:ULAfinitaryarc} in order to spread information at points to constructible subsets, we see that there is a constructible stratification of $S$ over which $A$ becomes universally locally acyclic. Passing to this stratification, we can assume that $A$ is universally locally acyclic. If $\Lambda=\mathbb Z_\ell$, we can now use that passing to relative Verdier duals commutes with any pullback, and exchanges ${}^p \mathcal D^{\leq 0}_{\cons,\tor}(X_{\overline{s}},\mathbb Z_\ell)$ and ${}^p \mathcal D^{\geq 1}_{\cons,\tor}(X_{\overline{s}},\mathbb Z_\ell)$. In the case $\Lambda$-coefficients, let $I$ be an injective $\Lambda$-module such that $\Hom_\Lambda(-,I)$ is conservative. Then also the formation of $R\sHom(A,Rf^! I)$ commutes with any pullback, and moreover it is given by $\mathbb D_{X/S}(A)\dotimes_\Lambda I$, which becomes locally constant over a constructible stratification (although not with perfect fibres, but this does not matter for the argument). Moreover, in each fibre the functor $A\mapsto R\sHom(A,Rf^! I)$ from $\mathcal D_\cons(X_{\overline{s}},\Lambda)^{\mathrm{op}}$ to $\mathcal D_\et(X_{\overline{s}},\Lambda)$ is faithful and $t$-exact for the perverse $t$-structure; this gives the result in general.
\end{proof}

Using the relative perverse $t$-structure, we have the following relative version of Artin vanishing. We note the strong hypothesis on the base scheme. The essential content of this proposition is due to Gabber \cite{GabberOberwolfach2020}.

\begin{proposition}\label{prop:artinvanishing} Let $S=\Spec V$ be the spectrum of an absolutely integrally closed valuation ring $V$, and let $f: Y\to X$ be an affine map of schemes of finite presentation over $V$. Then
\[
Rf_\ast: D(Y)\subset D(Y_\proet,\Lambda)\to D(X_\proet,\Lambda)
\]
takes values in $D(X)\subset D(X_\proet,\Lambda)$ and is right t-exact for the relative perverse $t$-structure, in any of the settings considered in Theorem~\ref{thm:maintext}. Moreover, if $S'=\Spec V'$ is of the similar form and $g: S'\to S$ is flat, with pullback $f': Y'\to X'$ (with $g_Y: Y'\to Y$ and $g_X: X'\to X$), then the base change map
\[
g_X^\ast Rf_\ast\to Rf'_\ast g_Y^\ast
\]
of functors $D(Y)\to D(X')$ is an isomorphism.
\end{proposition}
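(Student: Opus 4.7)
The statement packages three related assertions: (in settings (B) and (C)) constructibility of $Rf_*A$, right $t$-exactness of $Rf_*$ for the relative perverse $t$-structure (in all settings), and compatibility of $Rf_*$ with flat base change $V \to V'$ of absolutely integrally closed valuation rings. My plan is to address them in the order base change, $t$-exactness, constructibility; the first two are comparatively routine, while constructibility is the technical heart.

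For base change, since $g_X : X' \to X$ is itself flat (being the base change of the flat map $V \to V'$), the classical flat base change theorem for étale cohomology gives $g_X^*Rf_*A \cong Rf'_*g_Y^*A$ directly in settings (A) and (B). Setting (C) is reduced to torsion coefficients via Proposition~\ref{prop:constructiblelattices} together with the tower $\mathcal D_\cons(X,\mathcal O_L) = \varprojlim_n \mathcal D_\cons(X,\mathcal O_L/\ell^n)$.

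For right $t$-exactness, the condition $Rf_*A \in {}^{p/S}D^{\leq 0}(X)$ is fiberwise, so it suffices to check $(Rf_*A)|_{X_{\bar s}} \in {}^{p}D^{\leq 0}(X_{\bar s})$ for every geometric point $\bar s \to S$. Using Gabber's affine analogue of proper base change (valid since $f$ is affine and strict henselizations on the $V$-side have algebraically closed residue fields), both sides of the natural base change map $(Rf_*A)|_{X_{\bar s}} \to R(f_{\bar s})_*(A|_{Y_{\bar s}})$ are computed stalkwise at any geometric $\bar x \to X_{\bar s}$ as $R\Gamma$ of the common geometric fiber $Y \times_X \Spec \kappa(\bar x)$, so the map is an isomorphism. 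Classical Artin vanishing for the affine morphism $f_{\bar s}$ over the algebraically closed field $\kappa(\bar s)$ then gives $R(f_{\bar s})_*(A|_{Y_{\bar s}}) \in {}^{p}D^{\leq 0}(X_{\bar s})$, proving $Rf_*A \in {}^{p/S}D^{\leq 0}(X)$.

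Constructibility is the main obstacle. I would first reduce setting (C) to setting (B) via the same tower/lattice argument, and then reduce setting (B) to $V$ of finite rank by noetherian approximation. I would induct on the rank $r$ of $V$: the case $r=0$ is classical Deligne finiteness for affine morphisms over an algebraically closed field. For the inductive step, I would use that any constructible $A$ on $Y$ admits a constructible stratification of $S = \Spec V$ over which it becomes universally locally acyclic (ULA-ness is automatic at field points and spreads by the finitary arc-sheaf property of Proposition~\ref{prop:ULAfinitaryarc}). On the ULA open stratum, Theorem~\ref{thm:nearbycycles} identifies $A$ with $Rj_*$ of its generic-fibre restriction $A|_{Y_K}$, so $Rf_*A$ is controlled by $R(f \circ j)_*(A|_{Y_K})$ together with classical constructibility over the field $K$ and the good base change behaviour of $Rj_*$ on ULA sheaves. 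On the complementary closed strata, which are pullbacks from spectra of lower-rank quotient valuation rings, the inductive hypothesis applies. Gluing these contributions via the excision triangle produces a stratification of $X$ witnessing constructibility of $Rf_*A$. The principal difficulty in this inductive step is to apply Theorem~\ref{thm:nearbycycles} only along directions where it has already been established (avoiding a vicious circle with the relative perverse $t$-structure machinery that uses this very proposition), and to ensure that the pointwise constructibility on geometric fibres upgrades to a genuine global stratification of $X$.
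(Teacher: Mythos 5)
There is a genuine gap, and it is in what you call the ``comparatively routine'' parts. The central error is in your right $t$-exactness argument: you claim that Gabber's affine analogue of proper base change makes the base change map $(Rf_\ast A)|_{X_{\overline{s}}}\to R(f_{\overline{s}})_\ast(A|_{Y_{\overline{s}}})$ an isomorphism. This is false. The stalk of $Rf_\ast A$ at a geometric point $\overline{x}\to X$ over $\overline{s}$ is $R\Gamma(Y\times_X X_{(\overline{x})},A)$, the cohomology of the pullback to the \emph{strict henselization} of $X$ at $\overline{x}$, not of the fibre $Y\times_X\overline{x}$; Gabber's theorem applies to henselian pairs, and an affine scheme of finite type over a (strictly) henselian local ring is \emph{not} henselian along its special fibre -- this failure is precisely why nearby cycles are a nontrivial theory. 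If your claimed isomorphism were true, the proposition would indeed reduce formally to Artin vanishing on geometric fibres; but the paper explicitly warns that Proposition~\ref{prop:artinvanishing} ``does not formally reduce to its version over geometric points, and does not hold over more general bases.'' The actual content is Gabber's relative Artin vanishing: the paper reduces to $V$ of rank $1$ with the sheaf concentrated on the generic fibre, proves the case of relative curves via the rigid-analytic Runge-type surjectivity of Lemma~\ref{lem:runge} (using Huber's comparison $H^1(X_s,i^\ast Rj_\ast\mathbb F_\ell)\cong H^1(\hat X_\eta,\mathbb F_\ell)$ and Poincar\'e duality for adic spaces), and then runs an Artin-style induction on relative dimension, fibering in curves, passing to strict henselizations at closed points of the special fibre, and compactifying to $\mathbb P^1$ to invoke proper base change. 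None of this is circumventable by a fibrewise argument.

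Your base change step has a related problem: there is no ``classical flat base change theorem'' in \'etale cohomology (the classical theorems are smooth and proper base change). The commutation of $Rf_\ast$ with pullback along a flat map $V\to V'$ of absolutely integrally closed valuation rings is genuinely nontrivial; the paper deduces it from Corollary~\ref{cor:invarianceofcohomology} (Huber's invariance of cohomology under such extensions, itself reproved via Theorem~\ref{thm:nearbycycles}), by checking sections over \'etale $X'$-schemes. So this part is fixable by citing the right input, but not by the principle you invoke. Your constructibility sketch, by contrast, is broadly in the spirit of the paper's d\'evissage (approximation to finite rank, arc-excision to rank $1$, splitting into the part supported on the special fibre and the part $\ast$-extended from the generic fibre, where Theorem~\ref{thm:nearbycycles} applies), though it remains a plan rather than a proof; the proposal stands or falls on the $t$-exactness step, which as written fails.
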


We note that over any base $S$, and for any affine map $g: Y\to X$ of finitely presented $S$-schemes, the functor $Rg_!$ is left t-exact for the relative perverse $t$-structure; this assertion immediately reduces to the statement over geometric points. By contrast, Proposition \ref{prop:artinvanishing} does not formally reduce to its version over geometric points, and does not hold over more general bases. (We warn the reader that over $S$ as in the proposition, Verdier duality is not a perfect duality; in fact, it vanishes on all sheaves whose restriction to the generic fibre vanishes. Thus, one cannot control $Rg_\ast$ in terms of $Rg_!$.)

\begin{proof} Setting (C) with rational coefficients reduces to setting (C) with integral coefficients by inverting $\ell$, and this in turn reduces to setting (B). Forgetting the $\Lambda$-module structure, all statements except for preservation of constructibility reduce to the case of $\mathcal D_{\cons,\tor}(-,\mathbb Z_\ell)$.

Let us first handle the base change result. By checking sections over all \'etale $X'$-schemes, it suffices to show that the map
\[
R\Gamma(X',g_X^\ast Rf_\ast A)\to R\Gamma(Y',g_Y^\ast A)
\]
is an isomorphism. But Corollary~\ref{cor:invarianceofcohomology} reduces this to $R\Gamma(X,Rf_\ast A) = R\Gamma(Y,A)$ which is clear.

Now we have to show that $Rf_\ast$ preserves constructibility and is right t-exact. By approximation, we can assume that $V$ is of finite rank, and that the sheaf is concentrated in one fibre over $S$. As in the proof of Lemma~\ref{lem:finitecohomdim}, we can then use arc-excision to reduce to the case that $V$ is of rank $1$ (and a sheaf concentrated on the generic fibre).

To show preservation of constructibility, we can now make a d\'evissage to sheaves concentrated on the special fibre, and sheaves $\ast$-extended from the generic fibre. The first case reduces to the known assertion when $S$ is a geometric point, and the second case also reduces to this assertion on the generic fibre, together with Theorem~\ref{thm:nearbycycles}.

It remains to prove right t-exactness, in the case that $V$ is of rank $1$ and the sheaf is concentrated in the generic fibre. We first handle the case that $X=S$ and $Y$ is an affine curve over $S$. In that case, we have to prove that the cohomological dimension of $Y$ is $1$. We can assume that $\mathcal F=j_! L$ for some open immersion $j: V\subset Y$ contained in the generic fibre and some local system $L$ on $V$; we can also assume that $V$ is smooth. Let $W\to V$ be a finite \'etale $G$-torsor trivializing $L$ and let $j': W\subset Z$ be the normalization of $Y$ in $W$. Then $R\Gamma(Y,\mathcal F)$ can be identified with the $G$-homology on $R\Gamma(Z,j'_! L|_W)$. Thus, we can assume that $L$ is trivial, and then reduce to $L=\mathbb F_\ell$. Moreover, we can assume that the generic fibre of $Z$ is smooth. Let $j_Z: Z_\eta\to Z$ be the open immersion, and $i_Z: Z_s \to Z$ the closed immersion of the special fiber. Then the cone of $j'_! \mathbb F_\ell\to j_{Z!} \mathbb F_\ell$ is a skyscraper sheaf at the finitely many points of $Z_\eta\setminus W$, all of which are geometric points, and so we reduce to the sheaf $j_{Z!}\mathbb F_\ell$. This sheaf sits in a triangle \[ j_{Z!}\mathbb F_\ell \to Rj_{Z \ast}\mathbb F_\ell \to i_{Z \ast} i_{Z}^{\ast} Rj_{Z \ast}\mathbb F_\ell \to, \]so applying $R\Gamma(Z,-)$ gives a triangle
\[ R\Gamma(Z, j_{Z!}\mathbb F_\ell) \to R\Gamma(Z_\eta, \mathbb F_\ell) \to R\Gamma(Z_s, i_{Z}^{\ast} Rj_{Z \ast} \mathbb F_\ell) \to. \] Using Lemma \ref{lem:nearbycyclestexact} together with Artin vanishing in the generic and special fibers, we see that the two rightmost terms of this triangle are concentrated in degrees $\leq 1$. This reduces us to the surjectivity of the map $H^1(Z_\eta, \mathbb F_\ell) \to H^1(Z_s, i_{Z}^{\ast} Rj_{Z \ast} \mathbb F_\ell)$, which is Lemma \ref{lem:runge} below.

The rest of the following argument is similar to the proof of Artin vanishing, and inspired by \cite[Th\'eor\`eme 2.4]{IllusieVariation}. We argue by induction on $d(A)$, where for $A\in {}^{p/S} D_{\cons,\tor}^{\leq 0}(Y,\mathbb Z_\ell)$, we denote by $d(A)$ the relative dimension of the closure of the support of $A$. Here, the relative dimension of a scheme of finite type over $S$ is the maximum of the dimension of its two fibres. Choosing a closed immersion, we can assume that $Y=\mathbb A^n_X$, and then by induction we reduce to $Y=\mathbb A^1_X$. Let $j_Y: Y_\eta\subset Y$ and $i_Y: Y_s\subset Y$ be the inclusion of the generic and special fibre (and we will use similar notation for $X$). Using the triangle
\[
j_{Y!} A|_{Y_\eta}\to A\to i_{Y\ast} A|_{Y_s}
\]
and Artin vanishing in the special fibre, we reduce to $A=j_{Y!} A_0$ for some $A_0\in {}^p D_{\cons,\tor}^{\leq 0}(Y_\eta,\mathbb Z_\ell)$.

We can replace $X$ by a strict henselization at one of its points, which we can assume to lie in the special fibre (as the result is known in the generic fibre). In fact, we can assume that it is a closed point of the special fibre. Indeed, if not, we can find a map $X\to \mathbb A^1_S$ sending $x$ to the generic point of the special fibre, which on strict henselizations will factor over the strict henselization of $\mathbb A^1_S$ at the generic point of the special fibre, which is the spectrum of a valuation ring $W$ whose fraction field has absolute Galois group pro-$p$, where $p$ is the residue characteristic of $V$. As pro-$p$-extensions are insensitive to the desired vanishing, we can then replace $V$ by $W$ and argue by induction. Let $x\in X$ denote the closed point of $X$. We have to show that
\[
R\Gamma(\mathbb A^1_X,A)\in D^{\leq 0}(\mathbb Z_\ell).
\]

Now consider the cartesian diagram
\[\xymatrix{
\mathbb A^1_X\ar[r]^j\ar[d]^{g^\circ} & \mathbb P^1_X\ar[d]^g\\
\mathbb A^1_S\ar[r]^{j'} & \mathbb P^1_S.
}\]
Then by proper base change
\[
R\Gamma(\mathbb A^1_X,A) = R\Gamma(\mathbb P^1_X,Rj_\ast A)=R\Gamma(\mathbb P^1_x,(Rj_\ast A)_{\mathbb P^1_x}).
\]
Moreover, $(Rj_\ast A)|_{\mathbb P^1_x}$ is concentrated on $x\times\{\infty\}$, as $A=j_{Y!} A_0$. It follows that
\[
R\Gamma(\mathbb A^1_X,A) = (Rj_\ast A)_{x\times\{\infty\}}.
\]
Taking strict henselizations at $x \times \{\infty \} \in \mathbb P^1_X$ and $s \times \{ \infty \} \in \mathbb P^1_S$ on the right-hand side of the previous cartesian diagram, we get a cartesian diagram
\[\xymatrix{
U\ar[r]^{u}\ar[d]^{h^\circ} & Z\ar[d]^{h}\\
V\ar[r]^{v} & T
}\]
and
\[
(Rj_\ast A)_{x\times\{\infty\}} = R\Gamma(U,A) = R\Gamma(V,Rh^\circ_\ast A).
\]
Now $h^\circ: U\to V$ is a map of affine schemes essentially of finite type over $S$, and $V$ does not map to any closed points of $\mathbb A^1_s$. It follows from the inductive hypothesis (and passage to limits) that $Rh^\circ_\ast A\in {}^{p/S} D^{\leq 0}_\tor(V,\mathbb Z_\ell)$, where we interpret the latter statement in the loose sense that all the stalks sit in the expected degrees. Thus, it remains to show that for all $B\in {}^{p/S} D^{\leq 0}_\tor(V,\mathbb Z_\ell)$, one has
\[
R\Gamma(V,B)\in D^{\leq 0}(\mathbb Z_\ell).
\]
Now $V$ is a limit of affine curves over $S$, so by passage to limits, this reduces to the case of curves already handled.
\end{proof}

\begin{lemma}\label{lem:runge} Let $S=\Spec V$ be the spectrum of an absolutely integrally closed valuation ring of rank one. Let $X$ be an affine curve over $S$ with smooth generic fiber, with $j:X_\eta \subset X$ and $i: X_s \subset X$ the habitual inclusions. Then the natural map $H^1(X_\eta, \mathbb F_\ell) \to H^1(X_s, i^{\ast} Rj_{ \ast} \mathbb F_\ell)$ is surjective.
\end{lemma}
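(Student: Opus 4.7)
The claim is equivalent, via the long exact sequence associated with the triangle $j_!\mathbb F_\ell \to Rj_*\mathbb F_\ell \to i_*i^*Rj_*\mathbb F_\ell$ and Artin vanishing $H^2(X_\eta,\mathbb F_\ell)=0$ for the smooth affine curve $X_\eta/K$ over the algebraically closed field $K$, to the statement that $H^2(X, j_!\mathbb F_\ell)=0$. I would prove this vanishing by a compactification and excision argument.

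First, I would choose an open immersion $k: X \hookrightarrow \overline X$ into a proper flat $V$-scheme $\overline X$ of relative dimension one such that $Z := \overline X \setminus X$ is finite over $V$. This can be arranged by taking a Nagata compactification and replacing $\overline X$ by the scheme-theoretic closure of $X$ inside it, so that $X_s$ is dense in $\overline X_s$ and hence $Z_s$ is zero-dimensional; then $Z\to V$ is quasi-finite and proper, hence finite. Let $\bar\jmath = k\circ j: X_\eta \hookrightarrow \overline X$ and $\bar f: \overline X\to S:=\Spec V$ the structure map. The key computation is $R\Gamma(\overline X, \bar\jmath_!\mathbb F_\ell)=0$: by proper base change $R\bar f_*\bar\jmath_!\mathbb F_\ell$ has zero stalk at $s$ (since $\bar\jmath_!\mathbb F_\ell$ vanishes on $\overline X_s$) and stalk $A:=R\Gamma(\overline X_\eta, j'_!\mathbb F_\ell)=R\Gamma_c(X_\eta,\mathbb F_\ell)$ at $\eta$, so it equals $j_{S!}\underline A$ for $j_S:\eta\hookrightarrow S$. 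Using the triangle $j_{S!}\underline A\to Rj_{S*}\underline A\to i_{S*}i_S^*Rj_{S*}\underline A$, $R\Gamma(S, j_{S!}\underline A)$ is the fiber of $A$ into the $s$-stalk of $Rj_{S*}\underline A$; since $K$ is algebraically closed and $V$ is absolutely integrally closed, $Rj_{S*}\underline A$ is the constant complex $\underline A$ and the map is the identity, so the fiber vanishes.

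Applying the open-closed excision triangle $k_!(-)\to Rk_*(-)\to i_{Z*}i_Z^* Rk_*(-)$ to $j_!\mathbb F_\ell$ and taking $R\Gamma(\overline X,-)$, the vanishing just established gives $R\Gamma(X, j_!\mathbb F_\ell)\simeq R\Gamma(Z, i_Z^*Rk_*(j_!\mathbb F_\ell))$. Since $V$ is strictly henselian and $Z\to V$ is finite, $Z$ is a disjoint union of spectra of strictly henselian local rings and cohomology on $Z$ becomes a direct sum of stalks at closed points $z\in Z_s$; each such stalk is $(Rk_*(j_!\mathbb F_\ell))_z = R\Gamma(U\cap X, j_!\mathbb F_\ell)$ for $U=\Spec\mathcal O^{sh}_{\overline X, z}$. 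After resolving singularities of the excellent two-dimensional scheme $\overline X$ (or directly choosing a regular model via semistable reduction for curves over $V$), each $U$ is a two-dimensional regular strict henselian local scheme and $Z\cap U$ is a reduced union of relative divisors through $z$. Using purity and the specialization map, together with Artin vanishing on the affine one-dimensional strata, both $R\Gamma(U\cap X, \mathbb F_\ell)$ and $R\Gamma((U\cap X)_s, \mathbb F_\ell)$ lie in $D^{\leq 1}$, and the induced map on $H^1$ is surjective (it identifies the tame monodromy classes around the components of $Z\cap U$ in the surface with those in the special fiber curve). Hence $R\Gamma(U\cap X, j_!\mathbb F_\ell)\in D^{\leq 1}$, and combining all stalk contributions gives $H^2(X, j_!\mathbb F_\ell)=0$.

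The hardest step is the last one: controlling $H^1$ of the punctured two-dimensional strict henselian neighborhoods, particularly when two components of $Z$ meet each other or meet the special fiber with nonreduced structure. I would handle this by first reducing (via further blowups of the model $\overline X$) to the case where the components of $Z$ crossing at $z$ are smooth with transverse intersections, then applying Gabber's affine analogue of proper base change to kill the henselization contribution from the arc-excision triangle for $U\cap X$, and finally invoking absolute cohomological purity to identify the surviving $H^1$ of the complement with a controlled tame fundamental group of the link, matching isomorphically with the corresponding tame class on the special fiber.
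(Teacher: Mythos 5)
Your opening reductions are fine: the lemma is indeed equivalent to $H^2(X,j_!\mathbb F_\ell)=0$ via the localization triangle and Artin vanishing on $X_\eta$, and the vanishing $R\Gamma(\overline X,\bar\jmath_!\mathbb F_\ell)=0$ is correct (even more directly than you argue: since $V$ is strictly local, $R\Gamma(S,-)$ is the stalk at $s$, and proper base change kills that stalk). But the two pillars of the rest of the argument have genuine gaps. First, the compactification: taking the scheme-theoretic closure of $X$ in a Nagata compactification does \emph{not} ensure that $X_s$ is dense in $\overline X_s$, so $Z$ need not be finite over $V$. For instance, blow up $\mathbb P^1_V$ at a closed point $x_0$ of the special fibre, let $E$ be the exceptional curve and $H$ the strict transform of a section through $x_0$; then $E+nH$ is ample for $n\gg 0$, so $X=\overline X\setminus(E\cup H)$ is an affine curve over $V$ whose closure in this model has the vertical curve $E$ in its boundary. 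One can repair this (here by contracting $E$), but the existence of a compactification with boundary finite over $V$ is exactly the kind of statement that needs proof, and over a non-noetherian rank-one valuation ring it is not routine.

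Second, and more seriously, the local step is not an argument yet. $\overline X$ lives over a non-noetherian, non-discrete valuation ring, so it is neither noetherian nor excellent: resolution of singularities and Gabber's absolute purity do not apply as stated, and you would need semistable modification for relative curves over valuation rings (Temkin) together with limit arguments to DVR subrings of $V$ — a substantial body of work you only gesture at. Moreover, the assertions that $R\Gamma(U\cap X,\mathbb F_\ell)\in D^{\leq 1}$ and that $H^1(U\cap X,\mathbb F_\ell)\to H^1((U\cap X)_s,\mathbb F_\ell)$ is surjective are local analogues of the lemma itself and are merely asserted; when two boundary branches cross at $z$, $H^2(U\setminus Z,\mathbb F_\ell)$ is genuinely nonzero (a product of two tame classes), and your proposed remedy — blowing up to separate the branches — inserts the exceptional curves into the boundary, so $Z$ is no longer finite over $V$ and the earlier identification of $R\Gamma(Z,i_Z^*Rk_*j_!\mathbb F_\ell)$ with a finite sum of stalks at closed points breaks down. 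As written, the plan does not close. For contrast, the paper avoids all of this by going rigid-analytic: Huber's comparison identifies $H^1(X_s,i^*Rj_*\mathbb F_\ell)$ with $H^1(\hat X_\eta,\mathbb F_\ell)$, Poincar\'e duality converts the claim into injectivity of $H^1_c(\hat X_\eta,\mathbb F_\ell)\to H^1_c(X_\eta^{\mathrm{an}},\mathbb F_\ell)$, and that follows from a connectedness (Runge-pair) argument on the compactified rigid curve — no resolution, purity, or local monodromy analysis is needed.
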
 

\begin{proof} Let $\hat{X}$ be the formal completion of $X$ along its special fiber, and let $\hat{X}_\eta$ be the associated rigid generic fiber, so $\hat{X}_{\eta}$ is naturally an open affinoid subset of the rigid analytic curve $X_{\eta}^{\mathrm{an}}$. By \cite[Corollary 3.5.14]{HuberBook}, there is a natural isomorphism $H^1(X_s, i^{\ast} Rj_{ \ast} \mathbb F_\ell) \cong H^1(\hat{X}_{\eta}, \mathbb F_\ell)$, under which the map in the lemma identifies with the natural map $H^1(X_\eta^{\mathrm{an}}, \mathbb F_\ell) \to H^1(\hat{X}_{\eta},\mathbb F_\ell)$ induced by restriction. We thus need to see that the latter map is surjective.

By Poincar\'e duality \cite[Chapter 7]{HuberBook}, the map in question is dual to the natural map $a: H^1_c(\hat{X}_{\eta}, \mathbb F_\ell) \to H^1_c(X_\eta^{\mathrm{an}}, \mathbb F_\ell)$, so it suffices to see that $a$ is injective. Let $Y$ be the smooth projective compactification of $X_\eta$, so we have compatible open immersions $j: \hat{X}_{\eta} \to Y^{\mathrm{an}}$ and $j': X_\eta^{\mathrm{an}} \to Y^{\mathrm{an}}$. Taking cohomology on $Y^{\mathrm{an}}$ of the exact sequence $0 \to j_! \mathbb F_\ell \to j'_! \mathbb F_\ell \to (j'_! \mathbb F_\ell)/(j_! \mathbb F_\ell) \to 0$, we get an exact sequence \[0 \to H^0(Y^{\mathrm{an}},(j'_! \mathbb F_\ell)/(j_! \mathbb F_\ell)) \to H^1_c(\hat{X}_{\eta}, \mathbb F_\ell) \overset{a}{\to} H^1_c(X_\eta^{\mathrm{an}}, \mathbb F_\ell). \] However, as any connected component of $Y^{\mathrm{an}}\setminus \hat{X}_\eta$ contains a point of $Y^{\mathrm{an}}\setminus X_\eta^{\mathrm{an}}$, one has
\[
H^0(Y^{\mathrm{an}},(j'_! \mathbb F_\ell)/(j_! \mathbb F_\ell)) = 0.
\]
This gives the result.
\end{proof}

There is also a relative perverse $t$-structure on universally locally acyclic sheaves.

\begin{theorem}\label{thm:ULAmaintext} Assume that $X$ is a separated scheme of finite presentation over $S$, and consider one of the settings (B) and (C). In case (B), assume that $\Lambda$ is regular. In case (C), assume that $S$ has only finitely many irreducible components. Then there is a relative perverse $t$-structure
\[
{}^{p/S} D^{\mathrm{ULA},\leq 0}(X/S),{}^{p/S} D^{\mathrm{ULA},\geq 0}(X/S)\subset D^{\mathrm{ULA}}(X/S)
\]
such that $A\in {}^{p/S} D^{\mathrm{ULA},\leq 0}(X/S)$ (resp.~$A\in {}^{p/S} D^{\mathrm{ULA},\geq 0}(X/S)$) if and only if for all geometric points $\overline{s}\to S$, the fibre $A|_{X_{\overline{s}}}$ lies in ${}^p D^{\leq 0}(X_{\overline{s}})$ (resp.~${}^p D^{\geq 0}(X_{\overline{s}})$).
\end{theorem}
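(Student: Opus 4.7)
The natural approach is to define the candidate subcategories
\[
{}^{p/S}D^{\mathrm{ULA},\leq 0}(X/S) := D^{\mathrm{ULA}}(X/S) \cap {}^{p/S}D^{\leq 0}(X), \quad {}^{p/S}D^{\mathrm{ULA},\geq 0}(X/S) := D^{\mathrm{ULA}}(X/S) \cap {}^{p/S}D^{\geq 0}(X),
\]
inherited from the relative perverse $t$-structure on $D(X)$ provided by Theorem \ref{thm:maintext}. Orthogonality and closure under shift are automatic. The only real content is to verify that, for every $A \in D^{\mathrm{ULA}}(X/S)$, the relative perverse truncation triangle ${}^{p/S}\tau^{\leq 0}A \to A \to {}^{p/S}\tau^{\geq 1}A$ formed in $D(X)$ has both outer vertices again universally locally acyclic over $S$.

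To verify this, I will reduce to a situation where nearby cycles do all the work. By Proposition \ref{prop:ULAfinitaryarc} the ULA condition is an arc-local one on $S$, and by Corollary \ref{cor:ULAtestrank1} it suffices to test after pulling back along any map $\Spec V \to S$ with $V$ an absolutely integrally closed valuation ring of rank $1$. Since the relative perverse truncation commutes with arbitrary base change in $S$ (Theorem \ref{thm:maintext}(ii)), it is enough to treat the case $S = \Spec V$ with $V$ absolutely integrally closed of rank $1$; write $j \colon X_\eta \hookrightarrow X$ and $i \colon X_s \hookrightarrow X$ for the inclusions.

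In this situation, Theorem \ref{thm:nearbycycles} gives an equivalence $j^* \colon D^{\mathrm{ULA}}(X/S) \xrightarrow{\sim} D(X_\eta)$ with inverse $Rj_*$. Writing $A = Rj_* A_\eta$ with $A_\eta = j^*A$, I can apply $Rj_*$ to the absolute perverse truncation triangle of $A_\eta$ on $X_\eta$ to obtain a triangle
\[
Rj_*({}^p\tau^{\leq 0}A_\eta) \to A \to Rj_*({}^p\tau^{\geq 1}A_\eta)
\]
all of whose vertices are ULA over $S$. Restricting to $X_\eta$ recovers the absolute perverse truncation, which is perverse-connective resp.\ strictly perverse-coconnective. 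Restricting to $X_s$, the outer vertices become $R\psi({}^p\tau^{\leq 0}A_\eta)$ and $R\psi({}^p\tau^{\geq 1}A_\eta)$, which by the perverse $t$-exactness of the nearby cycles functor (Lemma \ref{lem:nearbycyclestexact}, extended to the coefficient situations of settings (B) and (C) by the standard reduction to torsion coefficients as in the proof of Theorem \ref{thm:maintext}) are again perverse-connective resp.\ strictly perverse-coconnective. Hence this triangle satisfies the characterizing properties of the relative perverse truncation on $D(X)$, and by uniqueness it \emph{is} the relative perverse truncation of $A$. In particular, both ${}^{p/S}\tau^{\leq 0}A$ and ${}^{p/S}\tau^{\geq 1}A$ lie in $D^{\mathrm{ULA}}(X/S)$.

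The main potential obstacle is the point where one has to transport between Theorem \ref{thm:maintext}'s relative perverse $t$-structure on all of $D(X)$ and the equivalence of Theorem \ref{thm:nearbycycles}; this is precisely the role played by the perverse $t$-exactness of nearby cycles, which is why Lemma \ref{lem:nearbycyclestexact} is the crucial input. Once this step is in hand, the fiberwise characterization of ${}^{p/S}D^{\mathrm{ULA},\leq 0}(X/S)$ and ${}^{p/S}D^{\mathrm{ULA},\geq 0}(X/S)$ is inherited directly from the analogous statement in Theorem \ref{thm:maintext}(i).
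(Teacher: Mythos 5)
For setting (B) your argument is correct and is essentially the paper's proof: you use that the relative perverse truncations of Theorem~\ref{thm:maintext} commute with base change, invoke Corollary~\ref{cor:ULAtestrank1} to reduce to $S=\Spec V$ with $V$ absolutely integrally closed of rank $1$, and there identify the truncation triangle of $A=Rj_\ast j^\ast A$ with $Rj_\ast$ applied to the absolute perverse truncation on $X_\eta$, using Theorem~\ref{thm:nearbycycles} and the perverse $t$-exactness of nearby cycles (Lemma~\ref{lem:nearbycyclestexact}); no extension of that lemma is even needed there, since setting (B) coefficients are torsion.

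In setting (C), however, there are two genuine gaps. First, your strategy of intersecting with the $t$-structure on $D(X)$ cannot prove the theorem as stated: Theorem~\ref{thm:maintext} in case (C) requires that \emph{every constructible subset} of $S$ have finitely many irreducible components, whereas Theorem~\ref{thm:ULAmaintext} only assumes this for $S$ itself. This weakening is not cosmetic: for ULA objects no stratification of $S$ is needed to control the relevant pro-systems (only the finitely many generic points of $S$ enter), which is why the paper re-runs the limit/descent argument of Theorem~\ref{thm:maintext} inside $D^{\mathrm{ULA}}(X/S)$ for $\mathcal O_L$-coefficients, and for $L$-coefficients instead descends the $t$-structure along a finitely presented v-cover over which a ULA integral structure exists (Proposition~\ref{prop:ULAintegralstructure}), arranged to still have finitely many irreducible components. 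Second, your appeal to Lemma~\ref{lem:nearbycyclestexact} ``extended by the standard reduction to torsion coefficients'' hides exactly the hard point: for $\mathcal O_L$-coefficients, if $B\in {}^p D^{\geq 1}(X_\eta,\mathcal O_L)$ then $B\dotimes_{\mathcal O_L}\mathcal O_L/\ell^n$ lies only in ${}^p D^{\geq 0}$, so passing to torsion coefficients and back yields $R\psi(B)\in {}^p D^{\geq 0}$, one degree short of the bound you need to recognize your triangle as the relative perverse truncation. Repairing this loss of a degree is precisely the pro-system/torsion-bounding argument (choosing $\ell^N$ killing the torsion of ${}^p\mathcal H^1$ at the generic points) that the paper carries out, and the $L$-coefficient case brings in the further issue of existence of ULA integral structures. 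So as written your proof establishes the theorem in setting (B), but in setting (C) it both assumes a stronger hypothesis on $S$ and leaves the key $t$-exactness input unproved.
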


\begin{proof} In setting (B), we have to show that the truncation functors for the relative perverse $t$-structure from Theorem~\ref{thm:maintext} preserves the condition of being universally locally acyclic. As the truncation functors commute with any pullback, Corollary~\ref{cor:ULAtestrank1} reduces us to the case that $S=\Spec V$ is the spectrum of an absolutely integrally closed valuation ring of rank $1$. In that case, Theorem~\ref{thm:nearbycycles} and Lemma~\ref{lem:nearbycyclestexact} give the result.

In setting (C) with integral coefficients, one can now argue exactly as in the proof of Theorem~\ref{thm:maintext}. In setting (C) with rational coefficients, we note that by inverting $\ell$ we get the desired $t$-structure on the full subcategory
\[
D^{\mathrm{ULA}}(X/S,\mathcal O_L)[\tfrac 1\ell]\subset D^{\mathrm{ULA}}(X/S,L).
\]
Moreover, if $A\in D^{\mathrm{ULA},\leq 0}(X/S,\mathcal O_L)[\tfrac 1\ell]$ and $B\in D^{\mathrm{ULA},\geq 1}(X/S,\mathcal O_L)[\tfrac 1\ell]$, then for any scheme $S'/S$, one has $\Hom(A|_{X\times_S S'},B|_{X\times_S S'})=0$. This reduces to the case of integral coefficients, and then to torsion coefficients, arguing as in the proof of Theorem~\ref{thm:maintext} (where the pro-zeroness of some system is proved over $X$, and then follows via base change over $X\times_S S'$). This implies that for any v-cover $S'\to S$ such that $S'$ still only has finitely many irreducible components, the $t$-structure on
\[
D^{\mathrm{ULA}}(X\times_S S'/S',\mathcal O_L)[\tfrac 1\ell]\subset D^{\mathrm{ULA}}(X\times_S S'/S',L)
\]
descends to a $t$-structure on the full subcategory of $D^{\mathrm{ULA}}(X/S,L)$ of those objects whose pullback to $X\times_S S'$ admits a universally locally acyclic integral structure. Indeed, one applies the preceding observation to the $S'$-schemes $S'\times_S S'\times_S\ldots\times_S S'$ to see that the perverse truncations over $S'$ automatically descend to $S$. But by Proposition~\ref{prop:ULAintegralstructure} and \cite[Lemma 2.12]{BhattScholzeWitt}, all objects of $D^{\mathrm{ULA}}(X/S,L)$ admit such an integral structure over some finitely presented v-cover of $S$, which we can then arrange to have only finitely many irreducible components still (by replacing it by the closure of the preimage of the finitely many generic points of $S$).
\end{proof}

\begin{theorem}\label{thm:perverseULAmaintext} Fix $f:X \to S$ as usual, and consider one of the settings (B) and (C). In case (B), assume that $\Lambda$ is regular. Moreover, in all settings, assume that $S$ is irreducible, and let $\eta\in S$ be the generic point, with $j: X_\eta\subset X$ the inclusion.
\begin{enumerate}
\item[{\rm (i)}] The restriction functor
\[
j^\ast: \mathrm{Perv}^{\mathrm{ULA}}(X/S)\to \mathrm{Perv}(X_\eta)
\]
is an exact and faithful functor of abelian categories. If $\Lambda$ is noetherian, the category $\mathrm{Perv}^{\mathrm{ULA}}(X/S)$ is noetherian. If $\Lambda$ is artinian, it is also artinian.
\item[{\rm (ii)}] Assume that $S$ is geometrically unibranch. The restriction functor
\[
j^\ast: \mathrm{Perv}^{\mathrm{ULA}}(X/S)\to \mathrm{Perv}(X_\eta)
\]
is exact and fully faithful, and its image is stable under subquotients.
\end{enumerate}
\end{theorem}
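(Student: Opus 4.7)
The plan for part~(i) is to establish $t$-exactness and faithfulness of $j^\ast$ separately; the noetherian and artinian statements then descend from $\mathrm{Perv}(X_\eta)$ via the resulting exact faithful functor. For $t$-exactness, observe that $j^\ast: D^{\mathrm{ULA}}(X/S)\to D(X_\eta)$ is $t$-exact for the relative perverse $t$-structures by the fiberwise definition in Theorem~\ref{thm:ULAmaintext}, so its restriction to hearts is automatically exact. For faithfulness, I would argue as follows: if $A\in\mathrm{Perv}^{\mathrm{ULA}}(X/S)$ satisfies $A|_{X_\eta}=0$, then the image $\pi(\mathrm{supp}(A))\subset S$ is constructible and avoids $\eta$; if nonempty, pick a rank~$1$ valuation ring $V$ with algebraically closed fraction field $K$ and a map $\mathrm{Spec}\,V\to S$ (available by irreducibility of $S$) sending the generic point to $\eta$ and the closed point into $\pi(\mathrm{supp}(A))$. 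Theorem~\ref{thm:nearbycycles} then gives $A|_{X_V}\cong Rj_{V,\ast}(A|_{X_K})=0$, contradicting the choice of the closed point. Given exactness and faithfulness, any ascending (resp.\ descending) chain in $\mathrm{Perv}^{\mathrm{ULA}}(X/S)$ maps to one in $\mathrm{Perv}(X_\eta)$ that stabilizes by noetherianness (resp.\ artinianness), and the vanishing of the successive quotients pulls back to the source.

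For part~(ii), assume $S$ is geometrically unibranch. The plan is to extend an arbitrary $f_\eta: A|_{X_\eta}\to B|_{X_\eta}$ to a morphism $f: A\to B$ by a proper-descent argument applied to a suitable arc-sheaf. Consider the presheaf of sets on $\mathrm{Sch}/S$
\[
\mathcal{G}(S') := \{g\in\mathrm{Hom}(A|_{X_{S'}},B|_{X_{S'}}):\, g|_{X_{S'\times_S\eta}}=f_\eta\text{ pulled back}\}
\]
(the condition is vacuous when $S'\times_S\eta=\emptyset$). By Proposition~\ref{prop:ULAfinitaryarc} this is a finitary arc-sheaf; by Theorem~\ref{thm:nearbycycles} it satisfies the valuative criterion of properness in the form $\mathcal{G}(V)\xrightarrow{\sim}\mathcal{G}(K)$ for every absolutely integrally closed valuation ring $V$ with fraction field $K$, and it has nonempty values at all algebraically closed fields over $S$. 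Lemma~\ref{lem:propersections} therefore produces a section $g'\in\mathcal{G}(S')$ over some finitely presented proper surjection $S'\to S$, which I will arrange (by passing to the union of $S$-dominant irreducible components) so that every irreducible component of $S'$ dominates $S$.

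It then remains to descend $g'$ along $S'\to S$ by verifying that the two pullbacks $p_1^\ast g',p_2^\ast g'$ agree in the ambient Hom-presheaf on $S'\times_S S'$. By arc-descent of $D^{\mathrm{ULA}}$ combined with the valuative criterion, this reduces to checking agreement after pullback along $\mathrm{Spec}\,W\to S'\times_S S'$ for every absolutely integrally closed valuation ring $W$. Here geometric unibranchness intervenes decisively: it ensures that all irreducible components of $S'\times_S S'$ dominate $S$, so every point of $S'\times_S S'$ admits a generization to a point lying over $\eta$. Consequently $W$ may be enlarged to an absolutely integrally closed valuation ring $W'$ (possibly of higher rank) whose generic point lies over $\eta$; the two pullbacks of $g'$ to $\mathrm{Spec}\,W'$ restrict over the generic fiber to $f_\eta$ pulled back and so coincide by the uniqueness aspect of Theorem~\ref{thm:nearbycycles}, forcing agreement already over $\mathrm{Spec}\,W$. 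This descends $g'$ to the sought-for morphism $f: A\to B$ (and $f|_{X_\eta}=f_\eta$ since $S'_\eta\to\eta$ is surjective), establishing fullness. Stability under subquotients will be proved by the identical strategy applied to the arc-sheaf of ULA-perverse extensions of a given subobject $C_\eta\hookrightarrow A|_{X_\eta}$; once $\widetilde C\in\mathrm{Perv}^{\mathrm{ULA}}(X/S)$ is constructed, the map $\widetilde C\to A$ provided by the already-established full faithfulness is automatically a monomorphism by faithfulness of $j^\ast$ applied to its kernel.

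The hardest step will be the descent in part~(ii): to leverage the geometric unibranch hypothesis into genuine agreement of the two pullbacks of $g'$ on $S'\times_S S'$. The generization argument only works if every point of $S'\times_S S'$ lies in some $S$-dominant irreducible component, which demands both the passage to $S$-dominant components of $S'$ and the stability (ultimately a torsion-freeness consideration over the integral base $S$) of this property under fibre product. That geometric unibranchness is essential rather than just a technical convenience is underscored by the case $X=S$, where the statement reduces to the standard fully faithfulness of $\mathrm{LocSys}(S)\to\mathrm{LocSys}(\eta)$ and fails already for merely irreducible $S$ due to the failure of $\pi_1(\eta)\twoheadrightarrow\pi_1(S)$.
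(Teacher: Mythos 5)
Your part (i) is fine and essentially the paper's argument: exactness is fiberwise, faithfulness reduces via a valuation ring with algebraically closed fraction field dominating $\eta$ to Theorem~\ref{thm:nearbycycles}, and the chain conditions descend along the exact faithful functor.

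Part (ii), however, has a genuine gap at the crucial descent step. You claim that geometric unibranchness of $S$ ensures that every irreducible component of $S'\times_S S'$ dominates $S$, so that every point of $S'\times_S S'$ generizes into the locus over $\eta$, and agreement of the two pullbacks of $g'$ can then be checked over valuation rings whose generic point lies over $\eta$. This is false: Lemma~\ref{lem:propersections} only hands you an arbitrary finitely presented proper surjection $S'\to S$, and even for $S$ normal (hence geometrically unibranch) and $S'$ irreducible and dominant the fibre product can acquire components killed over $\eta$. For instance, with $S=\mathbb A^2$ and $S'$ the blow-up at the origin with exceptional curve $E$, the surface $E\times E$ is an irreducible component of $S'\times_S S'$ mapping to a point of $S$; its points have no generization over $\eta$, so agreement of $p_1^\ast g'$ and $p_2^\ast g'$ there does not follow from agreement on the generic fibre, and your valuation-ring argument says nothing about it. This is exactly where the real content sits, and the paper handles it by a different mechanism: for the subobject/extension statement it works only with \emph{projective birational} covers $S'\to S$ (produced from the Zariski--Riemann space, not from Lemma~\ref{lem:propersections}), uses that geometric unibranchness forces the geometric fibres $S'_{\overline s}$ to be \emph{connected} proper varieties, and then invokes the rigidity Lemma~\ref{lem:familiessubperversesheaves} (the functor of ULA sub-relative-perverse sheaves over a variety over an algebraically closed field is a disjoint union of points, hence constant on connected proper fibres); agreement on the diagonal then propagates to all of $S'\times_S S'$. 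Note moreover that this rigidity is special to \emph{subobjects}: the analogous constancy fails for morphisms, since $\mathrm{Hom}(A|_{X_T},B|_{X_T})$ can genuinely grow over a proper fibre $T$ (e.g.\ by a K\"unneth contribution $H^{-2}\otimes H^2(\mathbb P^1)$), so even after restricting to birational covers your descent strategy would not yield fullness. The paper proves full faithfulness by an entirely different and much shorter route: for $A$ relatively perverse and ULA one has $Rj_\ast j^\ast A\cong A\dotimes_\Lambda f^\ast Rk_\ast\Lambda$ with $k:\eta\hookrightarrow S$, and geometric unibranchness enters only through the fact that the cone of $M\to Rk_\ast M$ sits in degrees $\geq 1$ for torsion $M$, whence $A\xrightarrow{\sim}{}^{p/S}\tau^{\leq 0}Rj_\ast j^\ast A$ and fullness follows by adjunction. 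Your proposal is missing both of these ideas (the adjunction computation for fullness, and the connected-fibre/rigidity argument for subquotients), and the step meant to replace them does not hold. You also leave the setting (C) rational-coefficient case (integral structures) unaddressed, but that is secondary to the main gap.
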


\begin{remark}\label{rem:perverseULAchar0} Sometimes, one can also use any fiber in part (i). More precisely, consider setting (B), and assume that $S$ is a $\mathbb Q$-scheme that is connected (but not necessarily irreducible), and $s\in S$ is any point. Assume moreover that $X$ is proper over $S$. Let $i_s: X_s\to X$ be the inclusion of the fiber at $s$. Then
\[
i_s^\ast: \mathrm{Perv}^{\mathrm{ULA}}(X/S)\to \mathrm{Perv}(X_s)
\]
is exact and faithful.

To prove this, only faithfulness needs an argument. By noetherian approximation, we can assume that $S$ is of finite type, and then irreducible. By the theorem, it suffices to show that vanishing of the restriction to $X_s$ implies vanishing of the restriction to $X_\eta$, so we can reduce to the case that $S$ is the spectrum of an absolutely integrally closed valuation ring $V$ of rank $1$ (of equal characteristic $0$). We have to see that if $A\in \mathcal D_\cons(X_\eta,\Lambda)$ such that the nearby cycles $R\psi(A)=0$ vanish, then $A=0$. We can assume that $A$ has full support $X_\eta$, and that $X$ is normal. But then the localization of $X$ at a generic point of the special fiber is itself an absolute integrally closed valuation ring of rank $1$ (using here critically the assumption of equal characteristic $0$), and thus the stalk of $R\Psi(A)$ agrees with the generic stalk of $A$, which by assumption is nonzero.
\end{remark}

\begin{proof} In part (i), we already know that the functor is exact. We need to see that it is faithful. Once this is known, the statement that $\mathrm{Perv}^{\mathrm{ULA}}(X/S)$ is noetherian (resp.~artinian) reduces to the analogous assertion for $\mathrm{Perv}(X_\eta)$ where it is standard. Now for exact functors of abelian categories, faithfulness is equivalent to being conservative. In other words, we need to see that if $A\in \mathrm{Perv}^{\mathrm{ULA}}(X/S)$ and $j^\ast A=0$, then $A=0$. As $\eta$ specializes to any other point, we can then assume that $S$ is the spectrum of a valuation ring, and one can assume that its fraction field is algebraically closed. Then the result follows from Theorem~\ref{thm:nearbycycles}.

In part (ii), we already know that the functor is exact and faithful. Consider first setting (B). This can be embedded into setting (A), and we first claim that for any $A\in \mathrm{Perv}^{\mathrm{ULA}}(X/S)$, the map
\[
A\to {}^{p/S}\tau^{\leq 0} Rj_\ast j^\ast A
\]
is an isomorphism. In fact, being universally locally acyclic implies that
\[
Rj_\ast j^\ast A\cong A\dotimes_{\Lambda} f^\ast(Rk_\ast \Lambda)
\]
(as in the proof of Proposition~\ref{prop:basicpropertiesULA}) where $k: \eta\subset \Spec S$ is the inclusion. Now it follows from the cone of $M\to Rk_\ast M$ being in degrees $\geq 1$ for any $\ell$-power torsion $\Lambda$-module $M$, which is a simple consequence of being geometrically unibranch. The map $A\to {}^{p/S}\tau^{\leq 0} Rj_\ast j^\ast A$ being an isomorphism implies that $j^\ast: \mathrm{Perv}^{\mathrm{ULA}}(X/S)\to \mathrm{Perv}(X_\eta)$ is fully faithful.

In setting (B), it remains to see that the image is stable under passage to subquotients. It is enough to handle subobjects, so take $A\in \mathrm{Perv}^{\mathrm{ULA}}(X/S)$ and let $B_0\subset j^\ast A\in \mathrm{Perv}(X_\eta)$ be a subobject. First, we show that if $S'\to S$ is a projective birational map such that $B_0$ admits an extension to $B'\in \mathrm{Perv}^{\mathrm{ULA}}(X_{S'}/S')$, then $B_0$ even extends to $B\in \mathrm{Perv}^{\mathrm{ULA}}(X/S)$. By v-descent, it suffices to see that the two pullbacks of $B'\subset A|_{X_{S'}}$ to $X_{S'\times_S S'}$ agree (as sub-perverse sheaves of $A|_{X_{S'\times_S S'}}$). They clearly agree when restricted to the diagonal $S'\subset S'\times_S S'$. But each geometric fibre $S'_{\overline{s}}$ of $S'\to S$, over a geometric point $\overline{s}\to S$, is a connected projective variety (as $S$ is geometrically unibranch), and thus by Lemma~\ref{lem:familiessubperversesheaves} the restriction of $B'$ to $X_{S'_{\overline{s}}}$ must be a constant sub-perverse sheaf of $A|_{X_{\overline{s}}}$ base-changed to $S'_{\overline{s}}$. This gives the desired claim.

For any such $S'\to S$, we can look at the maximal open subscheme $U'\subset S'$ to which $B_0$ extends as a universally locally acyclic perverse sheaf. (Here, as an exception, $U'$ may not be quasicompact.) Assume that $U'\neq S'$ for all such $S'\to S$. Then we can find a compatible family of points in $S'\setminus U'$ over all $S'\to S$, giving in the inverse limit a valuation ring $\Spec V\to S$ with $\Spec K=\eta\subset S$, where $K$ is the fraction field of $V$, and by Proposition~\ref{prop:ULAfinitaryarc} the non-existence of an extension of $B_0$ to a universally locally acyclic (necessarily perverse) sheaf to $S'\setminus U'$ implies that there is no such extension to $\Spec V$ either. In other words, we can assume $S=\Spec V$ is the spectrum of a valuation ring. We can now similarly pass up the tower of finite covers of $V$ (noting that taking generically \'etale extensions with Galois group $G$, any extension will automatically be $G$-equivariant and hence descend; while inseparable extensions do not matter). Thus, we can assume that the fraction field of $V$ is algebraically closed. But now Theorem~\ref{thm:nearbycycles} shows that $B_0$ must extend (and necessarily to a sub-relatively perverse sheaf, by Lemma~\ref{lem:nearbycyclestexact}).

It remains to prove (ii) in setting (C). With integral coefficients, this reduces easily to setting (B). To deduce it with rational coefficients, it suffices to show that any $A\in \mathrm{Perv}^{\mathrm{ULA}}(X/S,L)$ admits an $\ell$-torsion free integral structure $A_0\in \mathrm{Perv}^{\mathrm{ULA}}(X/S,\mathcal O_L)$. In fact, such integral structures are equivalent to $\ell$-torsion free integral structures of $A_\eta$ (which, over a field, are automatically universally locally acyclic). It follows from the case of integral coefficients that such an integral structure $A_0$ of $A$ is determined by the integral structure of $A_\eta$ (i.e., the forgetful functor is fully faithful); to see that it is essentially surjective, we can argue as in the previous two paragraphs, using the second part of Lemma~\ref{lem:familiessubperversesheaves}.
\end{proof}

We used the following lemma.

\begin{lemma}\label{lem:familiessubperversesheaves} Let $k$ be an algebraically closed field, let $X/k$ be a separated scheme of finite type, let $\Lambda$ be a regular $\mathbb Z_\ell$-algebra and let $A\in \mathrm{Perv}(X,\Lambda)$ in setting (B). The functor taking a $k$-scheme $S$ to the set of universally locally acyclic sub-relative perverse sheaves $B\subset A|_{X_S}$ in $\mathrm{Perv}^{\mathrm{ULA}}(X_S/S)$ is representable by a $k$-scheme that is a disjoint union of copies of $\Spec k$.

Similarly, if $A\in \mathrm{Perv}(X,L)$ in setting (C), then the functor taking any $k$-scheme $S$ to the set of universally locally acyclic $A_0\in \mathcal D_\cons(X_S,\mathcal O_L)$ with $A_0[\tfrac 1\ell]\cong A|_{X_S}$ and such that $A_0/^{\mathbb L}\ell\in \mathcal D_\cons(X_S,\mathcal O_L/\ell)$ is relatively perverse, is representable by a $k$-scheme that is a disjoint union of copies of $\Spec k$.
\end{lemma}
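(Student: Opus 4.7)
The plan is to show that $F: S \mapsto \{\text{ULA sub-rel-perverse } B \subset A|_{X_S}\}$ (and its analogue in setting (C)) is representable by $\bigsqcup_{B_0 \in I} \Spec k$, where $I$ denotes the set of sub-perverse sheaves of $A$ in $\mathrm{Perv}(X,\Lambda)$ (resp.\ the set of $\mathcal O_L$-integral structures on $A$ whose mod-$\ell$ reduction is perverse). The representing morphism will send $B_0 \in I$ to the constant family $B_0|_{X_S}$, and the proof amounts to showing that this is an isomorphism of functors on connected $k$-schemes.

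First, I would establish that $F$ is a (finitary, in setting (B)) arc-sheaf and is invariant under nilpotent thickenings of $S$; the former uses Proposition~\ref{prop:ULAfinitaryarc}, Theorem~\ref{thm:arcdescentD}, and in setting (C) Proposition~\ref{prop:ULAintegralstructure}, while the latter is topological invariance of the \'etale site. These reductions let me assume $S$ is reduced, irreducible, and of finite type over $k$. Second, I would verify the valuative criterion $F(\Spec V) \xrightarrow{\sim} F(\Spec K)$ for absolutely integrally closed valuation rings $V$ with fraction field $K$: this is immediate from Theorem~\ref{thm:nearbycycles} combined with Lemma~\ref{lem:nearbycyclestexact}, since the equivalence $j^*: D^{\mathrm{ULA}}(X_V/V) \simeq D(X_K)$ preserves the relative perverse heart and hence its sub-objects.

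The main step is the following specialization-closure property: for each $B \in F(S)$ and each $B_0 \in I$, the locus $U_{B_0} := \{\overline{s} \in S : B|_{X_{\overline{s}}} = B_0|_{X_{\overline{s}}}\}$ is closed under specialization. Indeed, given $s_1 \rightsquigarrow s_0$ with $s_1 \in U_{B_0}$, I would choose a map $\Spec V \to S$ from an absolutely integrally closed valuation ring $V$ sending the generic point to $s_1$ and the closed point to $s_0$; then $B|_{X_V}$ and $B_0|_{X_V}$ are two ULA sub-rel-perverse sheaves of $A|_{X_V}$ agreeing on the generic fiber, hence equal by the valuative criterion, and so agreeing on the special fiber as well. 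Now take $B_0 \in I$ to be the descent of $B|_{X_{\overline{\eta}}}$ back to $\mathrm{Perv}(X,\Lambda)$ for a geometric generic point $\overline{\eta}$ of $S$; then $\overline{\eta} \in U_{B_0}$, and since $\overline{\eta}$ specializes to every geometric point of $S$, specialization-closure forces $U_{B_0} = S$. Combining this with the faithfulness from Theorem~\ref{thm:perverseULAmaintext}(i), applied to the map $B \to A|_{X_S}/B_0|_{X_S}$ (which vanishes at $\eta$), yields $B \subset B_0|_{X_S}$, and by symmetry $B = B_0|_{X_S}$, as required.

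The main obstacle is the descent of $B|_{X_{\overline{\eta}}}$ from $\mathrm{Perv}(X_{\overline{\eta}},\Lambda)$ back to $\mathrm{Perv}(X,\Lambda)$, i.e.\ the invariance of the set of sub-perverse sheaves of $A$ under algebraically closed base extension. Injectivity of this base-change map follows immediately from full faithfulness (Corollary~\ref{cor:invarianceofcohomology}); essential surjectivity requires a separate classification argument using the $\Lambda$-linear endomorphism data of $A$, which is preserved under base change. Setting (C) then follows by the same strategy, now applied to the functor of integral structures from Proposition~\ref{prop:ULAintegralstructure}, noting that the condition ``$A_0/\ell$ is perverse'' is preserved and detected under arc-descent and specialization in exactly the same way.
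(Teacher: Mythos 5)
Your reduction steps track the paper's: both proofs use that the functor is a finitary arc-sheaf to reduce to $\Spec V$-points for absolutely integrally closed valuation rings $V$, and both use Theorem~\ref{thm:nearbycycles} together with Lemma~\ref{lem:nearbycyclestexact} to pass from $V$ to its algebraically closed fraction field $K$ (your specialization-closure argument over irreducible finite type $S$ is a somewhat more roundabout packaging of the same reduction, but it is workable). The problem is that after these reductions the entire mathematical content of the lemma is concentrated in exactly the step you defer: showing that every sub-perverse sheaf of $A|_{X_K}$ (resp.\ every integral structure with perverse mod-$\ell$ reduction) is already defined over $k$, i.e.\ invariance of these sets under extension of algebraically closed base fields. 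You acknowledge that ``essential surjectivity requires a separate classification argument using the $\Lambda$-linear endomorphism data of $A$,'' but this is not a proof, and the suggested mechanism does not work: subobjects of an object in an abelian category are not controlled by its endomorphism algebra (a nonsplit extension of simples can have $\mathrm{End}=\Lambda$ and still have a nontrivial subobject, and a priori new subobjects could appear after base change without any new endomorphisms appearing). Invariance of $\mathrm{Hom}$'s (Corollary~\ref{cor:invarianceofcohomology}) gives you injectivity, as you say, but it gives no handle on surjectivity.

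The paper closes this gap by a concrete d\'evissage: filter $A$ by intermediate extensions of local systems on smooth strata, reduce to the case of a local system on smooth $X$, observe that any sub-perverse sheaf is then again a local system, and conclude from the surjectivity of $\pi_1(X_K)\to \pi_1(X)$ that sub-local systems (resp.\ invariant lattices, in setting (C)) over $K$ coincide with those over $k$. Some argument of this kind, producing an actual descent of the subobject rather than of endomorphisms, is needed; as written, your proposal has a genuine gap at its central step, and the same gap recurs in your treatment of setting (C).
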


\begin{proof} In both cases, we need to see that this functor is the constant sheaf on its value on $S=\Spec k$. By adjunction, there is a map, and both functors are finitary arc-sheaves. It is thus sufficient to show that it induces an isomorphism on $S=\Spec V$-valued points where $V$ is an absolutely integrally closed valuation ring over $k$. By Theorem~\ref{thm:nearbycycles} and Lemma~\ref{lem:nearbycyclestexact}, one can reduce to the generic fibre $K$ of $V$. Now it is a simple consequence of general properties of invariance under change of algebraically closed base field. Indeed, in the first setting one can filter $A$ by intermediate extensions of local systems on (smooth) strata to reduce to the case of local systems on smooth $X$. In that case $B$ is also necessarily a local system, and the result follows from $\pi_1(X_K)\to \pi_1(X)$ being surjective. A similar argument works in the second setting.
\end{proof}

Finally, we note that the results also give the following result.

\begin{proposition}\label{prop:ULAintegralstructureunibranch} Assume that $S$ is geometrically unibranch and irreducible. Let $f: X\to S$ be a separated scheme of finite presentation and $A\in D_\cons^{\mathrm{ULA}}(X,L)$ in setting (C) with rational coefficients. Then there is some $A_0\in D_\cons^{\mathrm{ULA}}(X,\mathcal O_L)$ with $A\cong A_0[\tfrac 1\ell]$. If $A$ is relatively perverse, one can find such an $A_0$ that is also relatively perverse and $\ell$-torsion free (as a relatively perverse sheaf).
\end{proposition}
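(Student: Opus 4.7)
The plan is to construct $A_0$ by extending a chosen integral structure on the generic fiber via nearby cycles over absolutely integrally closed (AIC) valuation rings mapping to $S$, and then gluing via arc-descent on the arc-sheaf of integral structures from Proposition~\ref{prop:ULAintegralstructure}. First, pick an integral structure $A_{0,\eta}\in D_\cons(X_\eta,\mathcal O_L)$ of $A_\eta$ using Corollary~\ref{cor:integralstructureexists}; in the perverse case, replace $A_{0,\eta}$ by ${}^p\mathcal H^0(A_{0,\eta})$ modulo its $\ell$-power torsion to obtain a torsion-free perverse integral structure (automatically ULA since the base is a field).

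For each AIC valuation ring $V$ equipped with a morphism $\Spec V\to S$ whose generic point maps to $\eta$, one defines
\[
A_{0,V} := Rj_{V,\ast}(A_{0,\eta}|_{X_{K_V}}) \in D_\cons^{\mathrm{ULA}}(X_V/V,\mathcal O_L),
\]
where $K_V=\mathrm{Frac}(V)$ and $j_V:X_{K_V}\hookrightarrow X_V$. By Theorem~\ref{thm:nearbycycles} this is the unique ULA integral structure on $X_V$ with the prescribed generic fiber, and one has a canonical identification $A_{0,V}[\tfrac 1\ell]\cong A|_{X_V}$, since both sides are ULA extensions to $X_V$ of $A_\eta|_{X_{K_V}}$ and hence agree by the uniqueness in the $L$-coefficient version of Theorem~\ref{thm:nearbycycles}. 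In the perverse case, an induction on the rank of $V$ using Lemma~\ref{lem:nearbycyclestexact} on the rank-$1$ sub-quotients $V_{\mathfrak p_i}/\mathfrak p_{i-1}V_{\mathfrak p_i}$ shows $A_{0,V}$ is relatively perverse; applying the same argument to $A_{0,\eta}/\ell$ shows $A_{0,V}\otimes^{\mathbb L}_{\mathcal O_L}\mathcal O_L/\ell$ is relatively perverse, i.e., $A_{0,V}$ is $\ell$-torsion-free.

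To glue, for each $s\in S$ one picks an AIC valuation ring $V_s$ dominating $\mathcal O_{S,s}$ with fraction field an algebraic closure of $\kappa(\eta)$; this exists because $\eta$ generizes $s$ in the irreducible scheme $S$. Then $\tilde S:=\coprod_s \Spec V_s\to S$ is an arc-cover, and $(A_{0,V_s})_s$ is a section over $\tilde S$ of the arc-sheaf in Proposition~\ref{prop:ULAintegralstructure}. For the descent datum on $\tilde S\times_S\tilde S$, irreducibility of $S$ combined with $\mathcal O_S$-flatness of each $V_s$ implies that every minimal prime of $V_s\otimes_{\mathcal O_S}V_t$ lies over $\eta$; hence $\Spec(V_s\otimes_{\mathcal O_S}V_t)$ is arc-covered by AIC valuation rings $W$ whose generic points still map to $\eta$. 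On each such $W$, both pullbacks $p_1^\ast A_{0,V_s}|_{X_W}$ and $p_2^\ast A_{0,V_t}|_{X_W}$ are ULA extensions over $W$ of the common generic fiber $A_{0,\eta}|_{X_{K_W}}$, hence both canonically equal $A_{0,W}$ by Theorem~\ref{thm:nearbycycles}; these canonical identifications descend by arc-descent, with analogous compatibilities on higher overlaps. Hence the descent datum is effective, yielding the desired $A_0\in D_\cons^{\mathrm{ULA}}(X/S,\mathcal O_L)$ with $A_0[\tfrac 1\ell]\cong A$; in the perverse case, relative perversity and $\ell$-torsion-freeness of $A_0$ hold because they hold arc-locally. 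The main obstacle is keeping track of the higher coherences of arc-descent for an $\infty$-category-valued sheaf; this is handled by the canonicity of the construction $V\mapsto Rj_{V,\ast}$ via the uniqueness in Theorem~\ref{thm:nearbycycles}, which supplies all required compatibilities as forced identifications rather than abstract isomorphisms.
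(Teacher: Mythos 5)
Your argument has a fatal gap: it never uses the hypothesis that $S$ is geometrically unibranch, only irreducibility, and the statement is false without unibranchness. Take $X=S$ an irreducible nodal curve over an algebraically closed field and $A$ the rank one object attached to the character $\pi_1^{\proet}(S)\cong\mathbb Z\to L^\times$, $1\mapsto\ell$: it is constructible (trivial on the smooth locus and at the node), invertible, hence ULA over $S$, and relatively perverse, but any ULA integral structure $A_0$ would be a dualizable object of $\mathcal D_\cons(S,\mathcal O_L)$, i.e.\ a representation of the profinite group $\pi_1^{\et}(S)$ with monodromy in $\mathcal O_L^\times$, which cannot rationalize to $A$. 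So any proof must exploit unibranchness, and tracing your construction through this example shows where it breaks. First, the cover $\coprod_{s\in S}\Spec V_s\to S$ with \emph{one} valuation ring per point is in general not an arc-/v-cover: at the node the two branch valuations of $k(S)$ dominating $\mathcal O_{S,s_0}$ are incomparable, and a test map from the branch not chosen admits no lift (a lift would force $W\cap k(S)$ to contain both branch DVRs). Second, once both branches are included, the overlap analysis collapses: your assertion that each $V_s$ is $\mathcal O_S$-flat is false (a valuation ring dominating a local ring is torsion-free but usually not flat), and the conclusion that every minimal prime of $V_s\otimes_{\mathcal O_S}V_t$ lies over $\eta$ fails --- for the two branch valuation rings $V_a,V_t=V_b$ at the node, the points of $\Spec V_a\times_S\Spec V_b$ over $s_0$ are not specializations of points over $\eta$ (such a specialization would again be witnessed by a valuation ring dominating both branches). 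On those components there is no ``common generic fibre'' with which Theorem~\ref{thm:nearbycycles} can pin down an identification, and in the nodal example the identification forced by compatibility with $A$ is multiplication by $\ell^{\pm1}$, which is not an isomorphism of lattices: the descent datum you need simply does not exist. (Secondary issues: $\coprod_s\Spec V_s$ is not qcqs, so the arc-sheaf property of Proposition~\ref{prop:ULAintegralstructure} does not directly apply, and the ``forced higher coherences'' at the end are asserted rather than produced.)

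For comparison, the paper's proof is a reduction, not a new construction: one filters $A$ by its relative perverse cohomologies (ULA by Theorem~\ref{thm:ULAmaintext}) to reduce to the relatively perverse case, and then invokes the existence of an $\ell$-torsion free ULA perverse integral structure established at the end of the proof of Theorem~\ref{thm:perverseULAmaintext}(ii). That argument is where unibranchness genuinely enters: an integral structure of $A_\eta$ is extended over projective birational modifications $S'\to S$, whose geometric fibres are \emph{connected} because $S$ is geometrically unibranch, so that the rigidity Lemma~\ref{lem:familiessubperversesheaves} forces the two pullbacks to $S'\times_S S'$ to agree; the remaining obstruction is pushed into valuation rings via a Zariski--Riemann compactness argument and killed there by Theorem~\ref{thm:nearbycycles} (plus Galois descent along finite covers). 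Your idea of using Theorem~\ref{thm:nearbycycles} over valuation rings is the right local ingredient, but the global gluing step needs the connectedness/rigidity input that unibranchness provides; without it the construction proves a false statement.
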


\begin{proof} Passing to a filtration of $A$, we can assume that $A$ is relatively perverse. In that case, there is an $A_0$ that is relatively perverse and $\ell$-torsion free, as was proved at the end of the proof of Theorem~\ref{thm:perverseULAmaintext}.
\end{proof}

\bibliographystyle{amsalpha} 
\providecommand{\bysame}{\leavevmode\hbox to3em{\hrulefill}\thinspace}
\providecommand{\MR}{\relax\ifhmode\unskip\space\fi MR }
\providecommand{\MRhref}[2]{%
  \href{http://www.ams.org/mathscinet-getitem?mr=#1}{#2}
}
\providecommand{\href}[2]{#2}

\end{document}